\documentclass[preprint,hidelinks,onefignum,onetabnum]{siamart220329}


\usepackage{lipsum}
\usepackage{amsfonts}
\usepackage{graphicx}
\usepackage{epstopdf}
\usepackage{soul,cancel}
\usepackage{algorithmic}
\ifpdf
\DeclareGraphicsExtensions{.eps,.pdf,.png,.jpg}
\else
\DeclareGraphicsExtensions{.eps}
\fi


\newsiamremark{remark}{Remark}
\newsiamremark{hypothesis}{Hypothesis}
\crefname{hypothesis}{Hypothesis}{Hypotheses}
\newsiamthm{claim}{Claim}
\newcommand\revisions[1]{{#1}}

\headers{Explicit symmetric low-regularity methods for NLSE}{Y. Feng, G. Maierhofer, and C. Wang}

\title{Explicit symmetric low-regularity integrators for the nonlinear Schr\"odinger equation\thanks{Submitted to the editors DATE.
		\funding{{The first author gratefully acknowledges funding from the National Natural Science Foundation of China (grant no. 12401539). The second author gratefully acknowledges funding from Oxford Mathematical Institute. The third author was partially supported by the Ministry of Education of Singapore under its AcRF Tier 2 funding MOE-T2EP20122-0002 (A-8000962-00-00).}}
}}

\author{Yue Feng\thanks{School of Mathematics and Statistics, Xi’an Jiaotong University (\email{yue.feng@xjtu.edu.cn})} \and 
	Georg Maierhofer\thanks{Mathematical Institute, University of Oxford (\email{gam37@cam.ac.uk})} \and Chushan Wang\thanks{Department of Mathematics, National University of Singapore (\email{chushanwang@u.nus.edu})}}

\usepackage{amsopn}

\ifpdf
\hypersetup{
	pdftitle={An Example Article},
	pdfauthor={D. Doe, P. T. Frank, and J. E. Smith}
}
\fi


\usepackage{amsmath,amssymb,amsfonts}
%
\newcommand{\vphis}{\varphi_\text{s}}
\newcommand{\R}{\mathbb{R}}
\newcommand{\rmd}{\mathrm{d}}
\newcommand{\vep}{\varepsilon}
\newcommand{\vphi}{\varphi}

\newcommand{\bx}{\textbf{x}}
\newcommand{\vx}{\mathbf{x}}

\newtheorem{assumption}[theorem]{Assumption}


\usepackage{xcolor}


\usepackage{enumerate}

\usepackage{bm}

\usepackage{mathdots}

\renewcommand{\Re}{\mathrm{Re}}

\usepackage{mathrsfs}


%
%
%
%

\usepackage{tikz}
\usetikzlibrary{calc}
\usetikzlibrary{decorations}
\usetikzlibrary{positioning}
\usetikzlibrary{shapes}
\usetikzlibrary{external}

\newif\ifdark
\darkfalse

\ifdark
\definecolor{darkred}{rgb}{0.9,0.2,0.2}
\definecolor{darkblue}{rgb}{0.7,0.3,1}
\definecolor{darkgreen}{rgb}{0.1,0.9,0.1}
\definecolor{franck}{rgb}{0,0.8,1}
\definecolor{pagebackground}{rgb}{.15,.21,.18}
\definecolor{pageforeground}{rgb}{.84,.84,.85}
\pagecolor{pagebackground}
\AtBeginDocument{\globalcolor{pageforeground}}
\definecolor{symbols}{rgb}{0,0.7,1}
\colorlet{connection}{red!80!black}
\colorlet{boxcolor}{blue!50}

\else

\definecolor{darkred}{rgb}{0.7,0.1,0.1}
\definecolor{darkblue}{rgb}{0.4,0.1,0.8}
\definecolor{darkgreen}{rgb}{0.1,0.7,0.1}
\definecolor{franck}{rgb}{0,0,1}
\definecolor{pagebackground}{rgb}{1,1,1}
\definecolor{pageforeground}{rgb}{0,0,0}
\colorlet{symbols}{blue!90!black}
\colorlet{connection}{red!30!black}
\colorlet{boxcolor}{blue!50!black}

\fi

\tikzstyle{tinydots}=[dash pattern=on \pgflinewidth off \pgflinewidth]
\tikzstyle{superdense}=[dash pattern=on 4pt off 1pt]

\tikzset{
	cross/.style={path picture={ 
			\draw[symbols]
			(path picture bounding box.south east) -- (path picture bounding box.north west) (path picture bounding box.south west) -- (path picture bounding box.north east);
	}},
	root/.style={circle,fill=green!50!black,inner sep=0pt, minimum size=1.2mm},
	dot/.style={circle,fill=pageforeground,inner sep=0pt, minimum size=1mm},
	dotred/.style={circle,fill=pageforeground!50!pagebackground,inner sep=0pt, minimum size=2mm},
	var/.style={circle,fill=pageforeground!10!pagebackground,draw=pageforeground,inner sep=0pt, minimum size=3mm},
	kernel/.style={semithick,shorten >=2pt,shorten <=2pt},
	kernels/.style={snake=zigzag,shorten >=2pt,shorten <=2pt,segment amplitude=1pt,segment length=4pt,line before snake=2pt,line after snake=5pt,},
	rho/.style={densely dashed,semithick,shorten >=2pt,shorten <=2pt},
	testfcn/.style={dotted,semithick,shorten >=2pt,shorten <=2pt},
	renorm/.style={shape=circle,fill=pagebackground,inner sep=1pt},
	labl/.style={shape=rectangle,fill=pagebackground,inner sep=1pt},
	xic/.style={very thin,circle,draw=symbols,fill=symbols,inner sep=0pt,minimum size=1.2mm},
	g/.style={very thin,rectangle,draw=symbols,fill=symbols!10!pagebackground,inner sep=0pt,minimum width=2.5mm,minimum height=1.2mm},
	xi/.style={very thin,circle,draw=symbols,fill=symbols!10!pagebackground,inner sep=0pt,minimum size=1.2mm},
	xies/.style={very thin,rectangle,fill=green!50!black!25,draw=symbols,inner sep=0pt,minimum size=1.1mm},
	xiesf/.style={very thin,rectangle,fill=green!50!black,draw=symbols,inner sep=0pt,minimum size=1.1mm},
	xix/.style={very thin,crosscircle,fill=symbols!10!pagebackground,draw=symbols,inner sep=0pt,minimum size=1.2mm},
	X/.style={very thin,cross,rectangle,fill=pagebackground,draw=symbols,inner sep=0pt,minimum size=1.2mm},
	xib/.style={thin,circle,fill=symbols!10!pagebackground,draw=symbols,inner sep=0pt,minimum size=1.6mm},
	xie/.style={thin,circle,fill=green!50!black,draw=symbols,inner sep=0pt,minimum size=1.6mm},
	xid/.style={thin,circle,fill=symbols,draw=symbols,inner sep=0pt,minimum size=1.6mm},
	xibx/.style={thin,crosscircle,fill=symbols!10!pagebackground,draw=symbols,inner sep=0pt,minimum size=1.6mm},
	kernels2/.style={very thick,draw=connection,segment length=12pt},
	keps/.style={thin,draw=symbols,->},
	kepspr/.style={thick,draw=connection,->},
	krho/.style={thin,draw=symbols,superdense,->},
	krhopr/.style={thick,draw=connection,superdense},
	triangle/.style = { regular polygon, regular polygon sides=3},
	not/.style={thin,circle,draw=connection,fill=connection,inner sep=0pt,minimum size=0.5mm},
	diff/.style = {very thin,draw=symbols,triangle,fill=red!50!black,inner sep=0pt,minimum size=1.6mm},
	diff1/.style = {very thin,dectriangle={1}{0},fill=red!50!black,draw=symbols,inner sep=0pt,minimum size=1.6mm},
	diff2/.style = {very thin,dectriangle={1}{1},fill=red!50!black,draw=symbols,inner sep=0pt,minimum size=1.6mm},
	diffmini/.style = {very thin,rectangle,fill=black,draw=black,inner sep=0pt,minimum size=0.75mm},
	kernelsmod/.style={very thick,draw=connection,segment length=12pt},
	rec/.style = {very thin,rectangle,fill=black,draw=black,inner sep=0pt,minimum size=2mm},
	cerc/.style={very thin,circle,draw=black,fill=symbols,inner sep=0pt,minimum size=2mm},
	stars/.style={very thin,star,star points=6,star point ratio=0.5, draw=black,fill=red,inner sep=0pt,minimum size=0.7mm},
	>=stealth,
}

\def\DeclareSymbol#1#2#3{%
	\expandafter\gdef\csname MH@symb@#1\endcsname{\tikzsetnextfilename{symbol#1}%
		\tikz[baseline=#2,scale=0.15,draw=symbols,line join=round]{#3}}%
	\expandafter\gdef\csname MH@symb@#1s\endcsname{\scalebox{0.75}{\tikzsetnextfilename{symbol#1}%
			\tikz[baseline=#2,scale=0.15,draw=symbols,line join=round]{#3}}}%
	\expandafter\gdef\csname MH@symb@#1ss\endcsname{\scalebox{0.65}{\tikzsetnextfilename{symbol#1}%
			\tikz[baseline=#2,scale=0.15,draw=symbols,line join=round]{#3}}}%
}

\DeclareSymbol{thin}{1.4}{\draw[pagebackground] (-0.3,0) -- (0.3,0); \draw  (0,0) -- (0,2);}
\DeclareSymbol{thinn}{1.4}{\draw[pagebackground] (-0.3,0) -- (0.3,0); \draw[tinydots]  (0,0) -- (0,2);}

\DeclareSymbol{thick}{1.4}{\draw[pagebackground] (-0.3,0) -- (0.3,0); \draw[kernels2]  (0,0) -- (0,2);}
\DeclareSymbol{thickk}{1.4}{\draw[pagebackground] (-0.3,0) -- (0.3,0); \draw[kernels2]  (0,0) -- (0,2);}

\DeclareSymbol{thick2}{1.4}{\draw[pagebackground] (-0.3,0) -- (0.3,0); \draw[kernels2,tinydots]  (0,0) -- (0,2);}

\newcommand{\thick}{\csname MH@symb@thick\endcsname}
\newcommand{\thickk}{\csname MH@symb@thick2\endcsname}
\newcommand{\thin}{\csname MH@symb@thin\endcsname}
\newcommand{\thinn}{\csname MH@symb@thinn\endcsname}
%
%
%
%

\begin{document}
	
	\maketitle
	\begin{abstract}
		The numerical approximation of low-regularity solutions to the nonlinear Schr\"odinger equation is notoriously difficult and even more so if structure-preserving schemes are sought. Recent {works have} been successful in establishing symmetric low-regularity integrators for this equation. However, so far, all prior symmetric low-regularity algorithms are {fully} implicit, and therefore require the solution of a nonlinear equation at {each time step}, leading to significant numerical cost in the iteration. In this work, we introduce the first fully explicit (multi-step) symmetric low-regularity integrators for the nonlinear Schr\"odinger equation. We demonstrate the construction of an entire class of such schemes which notably can be used to symmetrise (in explicit form) a large amount of existing low-regularity integrators. We provide rigorous convergence analysis of our schemes and numerical examples demonstrating both the favourable structure preservation properties obtained with our novel schemes, and the significant reduction in computational cost over implicit methods.
	\end{abstract}
	
	\begin{keywords}
		Nonlinear Schr\"odinger equation, low regularity, {explicit symmetric schemes}, unconditionally stable
	\end{keywords}
	
	\begin{MSCcodes}
		65C30, 65M70, 35Q41, 35Q55
	\end{MSCcodes}
	\section{Introduction} 
	We consider the numerical approximation of low-regularity solutions to the nonlinear Schr\"odinger equation (NLSE) with cubic nonlinearity on the torus $\mathbb{T}^d \ (d=1, 2, 3)$ in the following form:
	\begin{equation}
		\label{NLSE}
		\begin{cases}i \partial_t u = -\Delta u +\mu |u|^{2} u, &\quad  t > 0, {\quad \bx} \in\mathbb{T}^d,\\
			u(0, \bx) = u_0(\bx), & \quad \bx\in\mathbb{T}^d,
		\end{cases}
	\end{equation} 
	where $u=u(t,\bx)$ and $\mu$ is a positive/negative constant corresponding to the defocusing/focusing NLSE. The NLSE can be used to model a wide range of physical phenomena including wave propagation in fiber optics and deep water waves \cite{sulem2007nonlinear,tai1986observation,thomas2012nonlinear}. With minor modifications the numerical methods introduced in this work can also be extended to the case of the Gross-Pitaevskii equation (GPE) (cf. \cite{bronsard2023error}) which is derived from the mean-field approximation of many-body problems in quantum physics and chemistry, especially in the modeling and simulation of Bose-Einstein condensation \cite{bao2012mathematical}.
    The NLSE \eqref{NLSE} conserves the {\sl mass}
     \begin{equation}\label{eqn:mass}
        M(u(t, \cdot)) = \int_{\mathbb{T}^d}|u(t, \bx)|^2 d \bx \equiv M(u_0),\quad t \geq 0,
    \end{equation}
    and the {\sl energy}
    \begin{equation}\label{eqn:energy}
        E(u(t, \cdot)) = \int_{\mathbb{T}^d} \left[|\nabla u(t, \bx)|^2 + \frac{\mu}{2} |u(t, \bx)|^4\right] d \bx \equiv E(u_0),\quad t \geq 0.
    \end{equation}
    
    In this paper, we are interested in the design of numerical schemes for the NLSE \eqref{NLSE} with low-regularity initial data (which also results in the low regularity of solutions), meaning that $u_0\in H^{\alpha}$, where $H^{\alpha}=H^\alpha(\mathbb{T}^d)$ is the periodic Sobolev space of order $\alpha$, for some $\alpha>0$ of small magnitude. The specific regularity assumptions (and thus the minimum value of $\alpha$ required for the convergence of our schemes) will become apparent throughout Sections~\ref{sec:explicit-low_regularity_integrators} \& \ref{sec:error_analysis}. For the NLSE with sufficiently smooth data, various accurate and efficient numerical methods have been proposed and analyzed in the past few years, including {finite difference methods \cite{akrivis1993finite,antoine2013computational,bao2013optimal}, exponential integrators \cite{bao2014uniform,celledoni2008symmetric,hochbruck2010exponential}, and time splitting methods \cite{bao2023improved,bao2003numerical,besse2002order,lubich2008splitting}.} However, these classical numerical methods generally require much higher regularity of the solution to converge at desired optimal order. {In the past decade, numerical approximation to nonsmooth solutions of the NLSE and other dispersive equations with low-regularity solutions has received significant attention in the numerical analysis community.} The first low-regularity integrator (LRI) treating this approximation for the NLSE was introduced by Ostermann \& Schratz \cite{ostermann_schratz2018low}, and this was followed by higher order methods constructed by Ostermann et al. \cite{ostermann2022second} and Bruned \& Schratz \cite{bruned_schratz_2022}, and integrators with lower regularity assumptions by Cao et al. \cite{cao2024new}. These initial constructions on a torus have also been extended to non-periodic boundary conditions by Alama Bronsard \cite{bronsard2023error} and Bai et al. \cite{bai2022constructive}, and a fully discrete error analysis was provided by Li \& Wu \cite{li2021fully} and Ostermann \& Yao \cite{ostermann2022fully}. \revisions{Later, a sequence of works by Alama Bronsard \cite{alama2023symmetric}, Alama Bronsard et al. \cite{alamabronsard_et_al23}, Banica et al. \cite{banicamaierhoferschratz22}, Feng et al. \cite{feng_maierhofer_schratz_2023}, and Maierhofer \& Schratz \cite{maierhofer_schratz_24} focussed on constructing structure preserving low-regularity integrators for the NLSE and other dispersive systems.} These works have demonstrated that it is possible to develop integrators that maintain favourable convergence behaviour for low-regularity data, while introducing structure preservation properties, in particular constructing symmetric low-regularity integrators. {The favourable long-time behaviour of such symmetric methods is rigorously understood when applied to finite-dimensional integrable reversible systems \cite{hairer2004symmetric,hairer2013geometric}. Even though the rigorous understanding of the long-time behaviour of such methods in the case of PDEs is much less straightforward \revisions{(cf.  Cohen et al. \cite{cohen2008conservation}, Faou \cite{faou2012geometric} and  Gauckler \& Lubich \cite{gauckler2010splitting})}, numerical observations of favourable structure preservation properties have motived the study of symmetric methods for PDEs for several decades, for example in the classification of symmetric splitting methods \cite{mclachlan2002splitting} and symmetric exponential integrators \cite{celledoni2008symmetric}.} However, in all the aforementioned works {on symmetric low-regularity integrators}, the introduction of symmetry into the integrators required the methods to become implicit, thus incurring the additional computational cost of the solution of a nonlinear equation at each time step. Recent work in the construction of exponential integrators has successfully overcome this issue by designing (and analysing) {multi-step} methods that are fully explicit, which maintain similar favourable features as their single step analogues but are also symmetric. This construction was performed in particular for the NLSE by Bao \& Wang \cite{bao2024explicit}, for the nonlinear Dirac equation by Jahnke \& Kirn \cite{jahnke2023numerical}, and for the dispersion-managed nonlinear Schrödinger equation by Jahnke \& Mikl \cite{jahnke2018adiabatic,jahnke2019adiabatic}.
	
	In this work, we seek to take on board the approach taken in these recent constructions and we design what are, to the best of our knowledge, the first fully explicit symmetric low-regularity integrators for the nonlinear Schr\"odinger equation. We study the convergence properties of these new schemes and demonstrate that efficient computations are possible while maintaining both guaranteed low-regularity convergence and symmetry of the method, the latter of which leads to favourable long-time behaviour of the methods observed in numerical experiments.
	
	The remainder of this paper is structured as follows. In Section~\ref{sec:explicit-low_regularity_integrators}, we introduce the construction of our novel explicit two-step low-regularity schemes based on the availability of single-step non-symmetric low-regularity integrators of specific form. {In particular}, we provide several examples of existing non-symmetric low-regularity integrators available in the literature which fit this framework in Section~\ref{sec:examples_construction_slri}. {The convergence analysis of these methods is presented in Section \ref{sec:error_analysis}, where a general framework is introduced in Section \ref{sec:general_convergence} and the detailed error estimates for specific examples are carried out in Sections~\ref{sec:application_to_sLRI1} \& \ref{sec:application_to_sLRI2}.} We provide detailed numerical experiments highlighting the advantages of our new methodology in Section~\ref{sec:numerical_experiments}. Finally, some concluding remarks and directions for future research are provided in Section~\ref{sec:conclusions}. {Throughout the paper, we denote by $ C $ a generic positive constant independent of the mesh size $ h $ and time step size $ \tau $, and by $ C(M) $ a generic positive constant depending on the parameter $ M $. The notation $ A \lesssim B $ is used to represent that there exists a generic constant $ C>0 $, such that $ |A| \leq CB $.}
	
	\section{Explicit symmetric low-regularity integrators}\label{sec:explicit-low_regularity_integrators}
	To begin with, we introduce some relevant preliminaries that will help our exposition of the construction of explicit symmetric low-regularity integrators in Sections~\ref{sec:idea_on_simple_integrator} \& \ref{sec:general_construction_of_schemes}. 
  We shall use the following convention for the Fourier transform and these spaces: For a function $u:\mathbb{T}^{d}\mapsto\mathbb{C}$, {we denote its Fourier expansion by $u(\vx)=\sum_{\textbf{k}\in\mathbb{Z}^d}\hat{u}_{\textbf{k}}e^{i\textbf{k}\cdot \textbf{x}}$ whenever it is defined. The periodic Sobolev norm of order $\alpha\geq 0$ is then denoted by
		$\|u\|_{H^{\alpha}}^2:=\sum_{\textbf{k}\in\mathbb{Z}^d}(1+|\textbf{k}|^2)^{\alpha}|\hat{u}_{\textbf{k}}|^2,$
		and the periodic Sobolev space of order $\alpha$ consists of all $L^2(\mathbb{T}^d)$ functions for which $\|\cdot\|_{H^{\alpha}}$ is bounded.} 
	Finally, we recall the definition of symmetric numerical schemes for two-step methods:
	{\begin{definition}\label{def:symmetric_scheme} 
			A two-step numerical scheme $u^{n+1} = \Phi^\text{\rm 2-step}_\tau(u^n, u^{n-1}) \ (n \geq 1)$ with $\Phi_{\tau}^\text{\rm 2-step}$ being the numerical flow is called \textbf{symmetric} if {it satisfies
            \begin{equation}
                u^{n-1} = \Phi_{-\tau}^{\text{\rm 2-step}}(u^n,  \Phi_{\tau}^{\text{\rm 2-step}}(u^n, u^{n-1})), \quad {\text{for all}} \quad u^{n-1}, u^n, \tau.
            \end{equation}
            }
	\end{definition}}
 In the above, $\Phi_{-\tau}^\text{\rm 2-step}$ is formally defined by simply replacing $\tau$ with $-\tau$ in the definition of the scheme, and we assume that the corresponding map is well-defined. This is the case for all the schemes we consider because the NLSE is time-reversible.
	\subsection{Construction of a specific explicit symmetric LRI}\label{sec:idea_on_simple_integrator}
	Before describing the construction more generally, let us begin with a specific example to introduce the main ideas {for the explicit symmetric LRI.}
	
	Choose a time step size $ \tau > 0 $ and denote time steps as $ t_n = n\tau $ for $ n = 0, 1, \dots $. By Duhamel's formula, whenever the exact solution $u$ of the NLSE \cref{NLSE} exists, it satisfies (by using the short notation $u(t)=u(t, \vx)$)
	\begin{align}\label{eq:duhamel_exact}
		u(t_n + \zeta) = e^{i\zeta\Delta} u(t_n)  - i\mu \int_0^\zeta e^{i(\zeta -s )\Delta} \left(|u(t_n + s)|^2 u(t_n+s)\right) \rmd s, \quad \zeta \in \R. 
	\end{align}
	According to the construction in \cite{bao2024explicit}, multiplying $e^{-i\zeta\Delta}$ on both sides of \cref{eq:duhamel_exact}, we obtain
	\begin{equation}\label{eq:duhamel_modified}
		e^{-i\zeta\Delta} u(t_n + \zeta) = u(t_n) - i\mu \int_0^\zeta e^{-is\Delta} \left(|u(t_n + s)|^2 u(t_n+s)\right) \rmd s, \quad \zeta \in \R.
	\end{equation}
	Taking $\zeta = \tau$ and $\zeta=-\tau$ in \cref{eq:duhamel_modified}, we get, respectively, for $n \geq 1$, 
	\begin{align}\label{eq:exact_plus}
		e^{-i\tau\Delta} u(t_{n+1}) &= u(t_n) - i\mu \int_0^\tau e^{-is\Delta} \left(|u(t_n + s)|^2 u(t_n+s)\right) \rmd s,\\
		e^{i\tau\Delta} u(t_{n-1}) 
		&= u(t_n) - i\mu \int_0^{-\tau} e^{-is \Delta} \left(|u(t_n + s)|^2 u(t_n+s)\right) \rmd s \notag \\
		&= u(t_n) + i\mu \int_{-\tau}^0 e^{-is \Delta} \left(|u(t_n + s)|^2 u(t_n+s)\right) \rmd s. \label{eq:exact_minus}
	\end{align}
	Subtracting \cref{eq:exact_minus} from \cref{eq:exact_plus}, we obtain
	\begin{equation}\label{eq:exact_symm}
		e^{-i \tau \Delta} u(t_{n+1}) - e^{i\tau\Delta} u(t_{n-1}) = -i\mu \int_{-\tau}^{\tau} e^{-is\Delta} \left(|u(t_n + s)|^2 u(t_n+s)\right) \rmd s.   
	\end{equation}
    {Note \eqref{eq:exact_symm} is simply Duhamel's formula applied on the interval $[t_{n-1},t_{n+1}]$ but the above derivation is instructive in light of the general construction presented in Section~\ref{sec:general_construction_of_schemes}.}
	Multiplying $e^{i \tau \Delta}$ on both sides, we have
	\begin{equation}\label{eq:exact}
		u(t_{n+1}) = e^{2i\tau\Delta} u(t_{n-1}) -i\mu e^{i\tau\Delta} \int_{-\tau}^{\tau} e^{-is\Delta} \left(|u(t_n + s)|^2 u(t_n+s)\right) \rmd s. 
	\end{equation}
	
	Then we use the technique proposed in \cite{ostermann_schratz2018low} to obtain an approximation to the integral above at low regularity.  
	Define
	\begin{align*}
		I^\tau := \int_{-\tau}^{\tau} e^{-is\Delta} \left(|u(t_n + s)|^2 u(t_n+s)\right) \rmd s = \int_{-\tau}^{\tau} e^{-is\Delta} \left(u(t_n + s)^2 \overline{u(t_n+s)} \right) \rmd s. 
	\end{align*}
	By iterating the Duhamel's formula, i.e. by substituting \eqref{eq:duhamel_exact} with $\zeta = s$ into the integral above, omitting terms involving two or more integrals, and rewriting the integral in the Fourier space, we obtain
    \begin{align}\label{eq:int1}
			I^\tau 
			&\approx \int_{-\tau}^{\tau} e^{-is\Delta} \left((e^{is\Delta}u(t_n))^2 (e^{-is\Delta}\overline{u(t_n)}) \right) \rmd s \notag \\
			&=\sum_{\textbf{k}=\textbf{k}_1+\textbf{k}_2-\textbf{k}_3}\int_{-\tau}^{\tau} e^{is{|\textbf{k}|}^2}\left(e^{-is(|\textbf{k}_1|^2 + |\textbf{k}_2|^2 - |\textbf{k}_3|^2)} \widehat{u}_{\textbf{k}_1}(t_n) \widehat{u}_{\textbf{k}_2}(t_n) \overline{\widehat{u}}_{\textbf{k}_3}(t_n)\right) \rmd s \  e^{i\textbf{k}\cdot \textbf{x}} \notag \\
			&= \sum_{\textbf{k}=\textbf{k}_1+\textbf{k}_2-\textbf{k}_3} \int_{-\tau}^{\tau} e^{is(\mathcal{L}_\text{dom} + \mathcal{L}_\text{low})} \rmd s \  \widehat{u}_{\textbf{k}_1}(t_n) \widehat{u}_{\textbf{k}_2}(t_n) \overline{\widehat{u}}_{\textbf{k}_3}(t_n) e^{i\textbf{k}\cdot\textbf{x}} =: \widetilde I^\tau, 	
		\end{align}
		where $\mathcal{L}_\text{dom} = 2 {|\textbf{k}_3|}^2$, and $ \mathcal{L}_\text{low} = 2(\textbf{k}_1\cdot \textbf{k}_2 - \textbf{k}_1\cdot \textbf{k}_3 - \textbf{k}_2\cdot \textbf{k}_3)$. Then we apply the approximation $\exp(is(\mathcal{L}_\text{dom} + \mathcal{L}_\text{low})) \approx \exp(is\mathcal{L}_\text{dom})$ 
		to \cref{eq:int1} and get a further approximation 
		\begin{align}\label{eq:approxmation}
			\widetilde I^\tau
			&\approx \sum_{\textbf{k}=\textbf{k}_1+\textbf{k}_2-\textbf{k}_3} \int_{-\tau}^{\tau} e^{is\mathcal{L}_\text{dom}} \rmd s \  \widehat{u}_{\textbf{k}_1}(t_n) \widehat{u}_{\textbf{k}_2}(t_n) \overline{\widehat{u}}_{\textbf{k}_3}(t_n) {e^{i\textbf{k}\cdot\textbf{x}}} \notag \\
			&= \sum_{\textbf{k}=\textbf{k}_1+\textbf{k}_2-\textbf{k}_3} 2 \tau \vphi_s(2\tau |\textbf{k}_3|^2)  \widehat{u}_{\textbf{k}_1}(t_n) \widehat{u}_{\textbf{k}_2}(t_n) \overline{\widehat{u}}_{\textbf{k}_3}(t_n) {e^{i\textbf{k}\cdot\textbf{x}}} \notag \\
			&= 2 \tau (u(t_n)^2) \vphi_s(2 \tau \Delta) \overline{u(t_n)}, 
		\end{align}
		where $\vphi_s(x) = \sin(x)/x$ for $x \in \R$.
	Plugging \cref{eq:approxmation} into \cref{eq:exact}, we have
	\begin{equation}
		u(t_{n+1}) \approx e^{2i\tau\Delta} u(t_{n-1}) - 2i\mu\tau e^{i\tau\Delta} ( u(t_n)^2 \vphis(2\tau\Delta) \overline{u(t_n)} ), \quad n \geq 1.  
	\end{equation}
	For the first step, we can use the same low regularity approximation mentioned above, which corresponds to the first-order LRI (LRI1) in \cite{ostermann_schratz2018low} as
		\begin{equation}\label{eqn:ostermann_schratz_lri}
			u^1 = \Phi^{\text{LRI1}}_\tau(u_0) := e^{i \tau \Delta} u_0 - i \mu \tau e^{i \tau \Delta} \left( u_0^2 \vphi_1(- 2 i \tau \Delta) \overline{u_0} \right), 
		\end{equation}
		where $\vphi_1(z) = (e^z-1)/z$ for $z \in \mathbb{C}$. Let $u^n$ be the numerical approximation to $u(t_n)$ for $n \geq 0$. Then we obtain an explicit symmetric LRI (sLRI1) as
		\begin{equation}\label{eq:sLRI1}
			\begin{aligned}
				&u^{n+1} = e^{2 i \tau \Delta} u^{n-1} - 2 i \mu \tau e^{i \tau \Delta} \left( (u^n)^2 \vphis(2 \tau \Delta) \overline{u^n} \right), \quad {n \geq 1}, \\
				&u^1 = \Phi^{\text{LRI1}}_\tau(u_0), \quad u^0 = u_0. 
			\end{aligned} 
		\end{equation}
		The sLRI1 \cref{eq:sLRI1} is an explicit two-step method, and it can be checked easily that it is symmetric in the sense of Definition~\ref{def:symmetric_scheme}. Moreover, similar to the sEWI in \cite{bao2024explicit}, the sLRI1 is unconditionally stable as we shall show later.
	
	\subsection{General construction of symmetrised two-step schemes}\label{sec:general_construction_of_schemes}
	It turns out that the above construction can be extended to obtain explicit symmetric low-regularity integrators from explicit non-symmetric methods more generally. 
	
	Fix a final time $0<T<T_\text{max}$ with $T_\text{max}$ being the maximum existing time of the solution to the NLSE \cref{NLSE}. Given a one-step numerical scheme for the NLSE \cref{NLSE}: $u^{n+1}=\Phi_{\tau}(u^n) \ (n \geq 0)$ with $\Phi_\tau$ being the numerical flow, we assume that the following properties hold for all $0<\tau\leq\tau_0$ with some $\tau_0>0$ possibly depending on $u_0$ and $T$ (see Section~\ref{sec:examples_construction_slri} for several examples). 
		\begin{assumption}\label{assumption:general_properties_higher_order_schemes}
			Let $ \Phi_t(v)=e^{it\Delta}v+\widetilde{\Phi}_{t}(v)$ for $v \in H^\alpha$ and $t=\pm \tau$.  
			\begin{enumerate}[(i)]
				\item \underline{The method is unconditionally stable,} i.e. there is a continuous function $M:\mathbb{R}\times \mathbb{R}\rightarrow \mathbb{R}_{\geq 0}$ such that for any $v,w\in H^{\alpha}$, we have
				\begin{align}\label{eqn:stability_general_tree_based-scheme}
					\|\widetilde{\Phi}_{\pm \tau}(v)-\widetilde{\Phi}_{\pm\tau}(w)\|_{H^\alpha}\leq {\tau M(\|v\|_{H^\alpha},\|w\|_{H^\alpha})}\|v-w\|_{H^\alpha}.
				\end{align}
				\item \underline{Local error of order $p+1$}, {i.e.} there is a continuous, increasing function $C:\mathbb{R}_{\geq0}\rightarrow\mathbb{R}_{\geq0}$ such that the local truncation error $\mathcal{R}^n_\tau := u(t_n+\tau)-\Phi_\tau(u(t_n))$ and $\mathcal{R}^{n+1}_{-\tau} := u(t_{n+1}-\tau)-\Phi_{-\tau}(u(t_{n+1}))$ for $0 \leq n \leq T/\tau-1$ satisfies
				\begin{equation}\label{eqn:local_error_general_tree_based-scheme}
					\|\mathcal{R}^n_{\tau}\|_{H^\alpha} + \|\mathcal{R}^{n+1}_{-\tau}\|_{H^\alpha} \leq \tau^{p+1} C\left(\sup_{t \in [t_n, t_{n+1}]} \|u(t)\|_{H^{\alpha+\gamma_1}}\right). 
				\end{equation}
				\item \underline{Improved local error due to symmetry {when $p$ is odd},} i.e. there is a continuous, increasing function $\widetilde{C}: \mathbb{R}_{\geq0}\rightarrow\mathbb{R}_{\geq0}$ such that,  for $1 \leq n \leq T/\tau-1$, 
				\begin{align}\label{eqn:improved_local_error_general_tree_based-scheme}
					\|\mathcal{R}^n_{\tau}-e^{2i\tau\Delta}\mathcal{R}^n_{-\tau}\|_{H^\alpha}\leq \tau^{p+2} \widetilde{C}\left(\sup_{t \in [t_{n-1}, t_{n+1}]} \|u(t)\|_{H^{\alpha+\gamma_1+\gamma_2}}\right). 
				\end{align}
			\end{enumerate}
	\end{assumption}
{\begin{remark}
    As we shall see later, Assumption~\ref{assumption:general_properties_higher_order_schemes} (i) can actually be relaxed to the estimate
    \begin{align}\label{eqn:weaker_stability_estimate}
        \|\widetilde{\Phi}_{\pm \tau}(v)-\widetilde{\Phi}_{\pm\tau}(w)\|_{H^\alpha}\leq \tau M(v,w)\|v-w\|_{H^\alpha},
    \end{align}
    where $M$ is uniformly bounded for both the numerical flow and exact solution, i.e. there is a constant $M_T>0$ such that $M(u(t_n),u^n)\leq M_T$ for all $t_n\leq T$.
\end{remark}}
 Using such a {basic} scheme, we can follow the recipe from Section~\ref{sec:idea_on_simple_integrator} to construct an explicit symmetric two-step integrator of order at least $p$. Indeed, we have	
{\begin{align*}
		 &u(t_{n+1}) = e^{i\tau\Delta}u(t_n)+\widetilde{\Phi}_{\tau}(u(t_n))+\mathcal{R}^n_{\tau}, \\
		 &u(t_{n-1}) =e^{-i\tau\Delta}u(t_n)+\widetilde{\Phi}_{-\tau}(u(t_n))+\mathcal{R}^n_{-\tau}.    
	\end{align*}}
		Thus we have equivalently 
		\begin{align}\label{eqn:general_construction_forward_basis}
			e^{-i\tau\Delta}u(t_{n+1})&=u(t_n)+e^{-i\tau\Delta}\widetilde{\Phi}_{\tau}(u(t_n))+e^{-i\tau\Delta}\mathcal{R}_{\tau}^n,\\ \label{eqn:general_construction_backward_basis}
			e^{i\tau\Delta}u(t_{n-1})&=u(t_n)+e^{i\tau\Delta}\widetilde{\Phi}_{-\tau}(u(t_n))+e^{i\tau\Delta}\mathcal{R}_{-\tau}^n, 
		\end{align}
  and therefore, subtracting \eqref{eqn:general_construction_backward_basis} from \eqref{eqn:general_construction_forward_basis} {and applying $e^{i\tau\Delta}$ to the resulting equation}, we find
		\begin{align}\label{eqn:general_scheme_with_remainder_term}
			u(t_{n+1})
			&=e^{2i\tau\Delta}u(t_{n-1}) + \widetilde{\Phi}_{\tau}(u(t_n))\!-e^{2i\tau\Delta}\widetilde{\Phi}_{-\tau}(u(t_n))+\mathcal{R}_{\tau}^n - e^{2i\tau\Delta}\mathcal{R}_{-\tau}^n.
		\end{align}
		This leads to the following natural definition of a corresponding {multi-step} method:
		\begin{equation}\label{eqn:general_scheme}
			\begin{aligned}
				&u^{n+1}=e^{2i\tau\Delta}u^{n-1} + \widetilde{\Phi}_{\tau}(u^n)-e^{2i\tau\Delta}\widetilde{\Phi}_{-\tau}(u^n), \quad n \geq 1, \\
				&u^{1} = e^{i\tau\Delta}u_0 + \widetilde \Phi_\tau(u_0), \quad u^0 = u_0. 
			\end{aligned}
		\end{equation}
		In fact, the explicit sLRI1 \cref{eq:sLRI1} constructed in the previous section is exactly \cref{eqn:general_scheme} with $\Phi_\tau = \Phi^\text{LRI1}_\tau $ given by \cref{eqn:ostermann_schratz_lri}. This will be further explained in the next subsection.
	\begin{remark}
		Although here we focus on the NLSE, the above construction is equally possible for other systems, and in particular can be exploited using the formalism in \cite{bruned_schratz_2022} to construct explicit symmetric low-regularity integrators for a large class of dispersive nonlinear PDEs.
	\end{remark}
 
	By the construction of the scheme \eqref{eqn:general_scheme}, we have the following symmetric property.
	\begin{proposition}
		The scheme \eqref{eqn:general_scheme} is symmetric in the sense of Definition~\ref{def:symmetric_scheme}.
	\end{proposition}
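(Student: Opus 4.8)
The plan is to verify Definition~\ref{def:symmetric_scheme} by a direct substitution, exploiting the fact that \eqref{eqn:general_scheme} is given in explicit closed form. First I would name the two-step numerical flow associated with \eqref{eqn:general_scheme},
\[
\Phi^{\text{2-step}}_{\tau}(w,v) := e^{2i\tau\Delta}v + \widetilde{\Phi}_{\tau}(w) - e^{2i\tau\Delta}\widetilde{\Phi}_{-\tau}(w),
\]
so that the scheme reads $u^{n+1} = \Phi^{\text{2-step}}_{\tau}(u^n,u^{n-1})$. Following the convention stated after Definition~\ref{def:symmetric_scheme}, $\Phi^{\text{2-step}}_{-\tau}$ is obtained by formally replacing $\tau$ with $-\tau$, which swaps $\widetilde{\Phi}_{\tau}\leftrightarrow\widetilde{\Phi}_{-\tau}$ and $e^{2i\tau\Delta}\leftrightarrow e^{-2i\tau\Delta}$, giving
\[
\Phi^{\text{2-step}}_{-\tau}(w,v) = e^{-2i\tau\Delta}v + \widetilde{\Phi}_{-\tau}(w) - e^{-2i\tau\Delta}\widetilde{\Phi}_{\tau}(w);
\]
this map is well defined on $H^\alpha$ since $e^{\pm 2i\tau\Delta}$ is a bounded invertible operator there (with bounded inverse $e^{\mp 2i\tau\Delta}$), reflecting the time-reversibility of the NLSE.

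Next I would set $v := \Phi^{\text{2-step}}_{\tau}(u^n,u^{n-1}) = e^{2i\tau\Delta}u^{n-1} + \widetilde{\Phi}_{\tau}(u^n) - e^{2i\tau\Delta}\widetilde{\Phi}_{-\tau}(u^n)$ and apply $e^{-2i\tau\Delta}$, using $e^{-2i\tau\Delta}e^{2i\tau\Delta}=\mathrm{Id}$, to obtain $e^{-2i\tau\Delta}v = u^{n-1} + e^{-2i\tau\Delta}\widetilde{\Phi}_{\tau}(u^n) - \widetilde{\Phi}_{-\tau}(u^n)$. Substituting this into the formula for $\Phi^{\text{2-step}}_{-\tau}(u^n,v)$, the two copies of $\widetilde{\Phi}_{-\tau}(u^n)$ cancel and the two copies of $e^{-2i\tau\Delta}\widetilde{\Phi}_{\tau}(u^n)$ cancel, leaving exactly $u^{n-1}$. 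Hence $u^{n-1} = \Phi^{\text{2-step}}_{-\tau}\bigl(u^n,\Phi^{\text{2-step}}_{\tau}(u^n,u^{n-1})\bigr)$ for all admissible $u^{n-1},u^n,\tau$, which is precisely the symmetry condition of Definition~\ref{def:symmetric_scheme}.

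There is no substantive obstacle: the whole argument is a one-line pairwise cancellation once the flow and its $\tau\mapsto-\tau$ reversal are written out, and the only point meriting explicit mention is that $\Phi^{\text{2-step}}_{-\tau}$ is genuinely well defined, i.e.\ the invertibility of $e^{2i\tau\Delta}$, which the text already flags. If desired, one may add the conceptual remark that this cancellation is structural rather than coincidental: the scheme was obtained in \eqref{eqn:general_scheme_with_remainder_term} by subtracting the backward identity \eqref{eqn:general_construction_backward_basis} from the forward identity \eqref{eqn:general_construction_forward_basis} and applying $e^{i\tau\Delta}$, a construction that is manifestly odd under $\tau\mapsto-\tau$, so symmetry is built in by design.
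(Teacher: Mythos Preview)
Your proposal is correct and follows essentially the same route as the paper: the paper simply multiplies \eqref{eqn:general_scheme} by $e^{-2i\tau\Delta}$ and rearranges to obtain $u^{n-1}=e^{-2i\tau\Delta}u^{n+1}+\widetilde{\Phi}_{-\tau}(u^n)-e^{-2i\tau\Delta}\widetilde{\Phi}_{\tau}(u^n)$, which is exactly the cancellation you write out. Your version is just a more explicit unpacking of the same one-line computation.
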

	
{ \begin{proof}
		{Multiplying $e^{-2i\tau\Delta}$ on both sides of the scheme \cref{eqn:general_scheme}, we have 
			\begin{align*}
				u^{n-1}=e^{-2i\tau\Delta} u^{n+1}+\widetilde{\Phi}_{-\tau}(u^n)-e^{-2i\tau\Delta}\widetilde{\Phi}_{\tau}(u^n).
			\end{align*}
			According to \cref{def:symmetric_scheme}, it is symmetric.}
	\end{proof}}
	
	\subsection{Applications of the general construction}\label{sec:examples_construction_slri}
	In this subsection, we apply the general construction \cref{eqn:general_scheme} to obtain a range of explicit symmetric LRIs for the NLSE \eqref{NLSE} through existing non-symmetric LRIs which satisfy Assumption~\ref{assumption:general_properties_higher_order_schemes} ({at least (i) and (ii)}) and can therefore be symmetrised as described in Section~\ref{sec:general_construction_of_schemes}.
	\paragraph{First-order LRI} 
		The first example is the first-order LRI1 \cref{eqn:ostermann_schratz_lri}, with the numerical flows for positive and negative time steps given by
		\begin{equation}\label{LRI1pm}
			\begin{aligned}
				&\Phi^\text{LRI1}_\tau(v) = e^{i\tau\Delta}v + \widetilde \Phi_\tau(v) :=  e^{i\tau\Delta}v - i \mu \tau e^{i\tau\Delta} \left(v^2 \vphi_1(-2i\tau\Delta) \overline{v}\right), \\
				&\Phi^\text{LRI1}_{-\tau}(v) = e^{-i\tau\Delta}v + \widetilde \Phi_{-\tau}(v) :=  e^{-i\tau\Delta}v + i \mu \tau e^{-i\tau\Delta} \left(v^2 \vphi_1(2i\tau\Delta) \overline{v}\right). 
			\end{aligned}
		\end{equation}
		Plugging \cref{LRI1pm} into \cref{eqn:general_scheme} yields exactly the sLRI1 \cref{eq:sLRI1}. By the analysis in \cite{ostermann_schratz2018low}, the numerical flow $\Phi_{\pm \tau} = \Phi^\text{LRI1}_{\pm \tau}$ satisfies, for $\alpha>d/2$,  \cref{assumption:general_properties_higher_order_schemes} (i) and (ii) with $p=1$ and $\gamma_1 = 1$. Since the LRI1 is of odd order, we shall show, for the first time, {that} the local truncation error also satisfies the refined estimate \cref{assumption:general_properties_higher_order_schemes} (iii) with $\gamma_2 = 2$ in Section \ref{sec:application_to_sLRI1}.
	
	\paragraph{Second-order LRI} 
	An example of an integrator satisfying the above assumptions with $p=2$ is given by the following second-order method, which was introduced by Bruned \& Schratz \cite{bruned_schratz_2022} and Ostermann et al. \cite{ostermann2022second}. The numerical flow for positive and negative time steps are given by
	\begin{equation}\label{eqn:bruned_schratz_lri}
		\begin{aligned}
			\Phi_{\tau}^\text{LRI2}(v)
			&=\mathrm{e}^{i \tau \Delta} v-i\mu \tau \mathrm{e}^{i \tau \Delta}\left(\left(v\right)^2[\varphi_1(-2 i \tau \Delta) {-} \varphi_2(-2 i \tau \Delta)] \overline{v}\right) \\
			&\quad -i \mu\tau\left(\mathrm{e}^{i \tau \Delta} v\right)^2\left[\mathrm{e}^{i \tau \Delta} \varphi_2(-2 i \tau \Delta) \overline{v}\right]-\mu^2\frac{\tau^2}{2} \mathrm{e}^{i \tau \Delta}\left[\left|v\right|^4 v\right], \\
			\Phi^\text{LRI2}_{-\tau}(v)
			&=\mathrm{e}^{-i \tau \Delta} v+i\mu \tau \mathrm{e}^{-i \tau \Delta}\left(\left(v\right)^2[\varphi_1(2 i \tau \Delta) {-} \varphi_2(2 i \tau \Delta)] \overline{v}\right) \\
			&\quad i \mu\tau\left(\mathrm{e}^{-i \tau \Delta} v\right)^2\left[\mathrm{e}^{-i \tau \Delta} \varphi_2(2 i \tau \Delta) \overline{v}\right]-\mu^2\frac{\tau^2}{2} \mathrm{e}^{-i \tau \Delta}\left[\left|v\right|^4 v\right],
		\end{aligned}
	\end{equation}
	where $\varphi_2(z)=(e^{z}-\varphi_1(z))/z$ for $z\in\mathbb{C}$. The local error of the method was studied in \cite{bruned_schratz_2022} and the stability of the scheme (together with a global convergence analysis) was proved in \cite{ostermann2022second} - further details are provided in Section~\ref{sec:application_to_sLRI2}. The recipe in Section~\ref{sec:general_construction_of_schemes} thus allows us to construct the following symmetric low-regularity integrator (sLRI2):
{	\begin{equation}\label{eq:sLRI2}
		\begin{aligned}
			u^{n+1}
			&=e^{2i\tau\Delta}u^{n-1}-i\mu 2\tau e^{i\tau\Delta}\left(\left(u^n\right)^2\left[\Re\,\varphi_1(-2i\tau\Delta) {-}\Re\,\varphi_2(-2i\tau\Delta)\right]\overline{u^n}\right) \\
			&\quad-i\mu\tau\left(e^{i\tau\Delta}u^n\right)^2\left[e^{i\tau\Delta}\varphi_2(-2i\tau\Delta)\overline{u^n}\right]\\
			&\quad-i\mu\tau e^{{2}i\tau\Delta}\left(\left(e^{-i\tau\Delta}u^n\right)^2\left[e^{-i\tau\Delta}\varphi_2(2i\tau\Delta)\overline{u^n}\right]\right), \quad n \geq 1, \\
			u^1 &= \Phi_\tau^\text{LRI2}(u_0), \quad u^0 = u_0. 
		\end{aligned}
	\end{equation}}
	
	\paragraph{Integrators with ultra-low regularity requirements} An example of an integrator with ultra-low regularity requirements for the cubic NLSE in one dimension was given by Cao et al. \cite{cao2024new}, who proved (cf. Lemma 4.3 in \cite{cao2024new}) that their method satisfies Assumption~\ref{assumption:general_properties_higher_order_schemes} (ii) for {$\alpha=0$, $\gamma_1=3/2$, and $p=1$}. {They also provide an indication of how a stability estimate of the form Assumption~\ref{assumption:general_properties_higher_order_schemes} (i) can be obtained with $\alpha>1/2$ which can then be extended to the estimate \eqref{eqn:weaker_stability_estimate} in $L^2$ analogously to the arguments presented in the proof of Theorem~\ref{cor:globalerrorinL2}.} In the interest of brevity, the reader is referred to (2.4) in \cite{cao2024new} for the full {stability argument and} definition of the integrator.
	
	\paragraph{Higher order resonance-based schemes obtained from decorated trees} In recent work, Bruned \& Schratz \cite{bruned_schratz_2022} introduced a decorated tree formalism for the construction of low-regularity integrators of arbitrary desired order for a general class of dispersive equations. Their analysis includes a structured study of the local error which allows the establishment of local error estimates precisely of the form of Assumption~\ref{assumption:general_properties_higher_order_schemes} (ii), where the value of $\gamma_1$ can be computed using a recursive expression in terms of operators of decorated trees. We note that the stability of these types of schemes currently has to be proven on a case-by-case basis, although the interpolatory framework described in \cite[Section~3.1]{alamabronsard_et_al23} may provide a road to a general stability analysis.
	
	\section{Convergence analysis of explicit symmetric two-step methods}\label{sec:error_analysis}
	Let us now consider the error analysis of the scheme given in \eqref{eqn:general_scheme}. {For this, we take the standard approach in the analysis of multistep methods (cf. \cite{bao2024explicit} for a similar analysis in the case of exponential integrators) and write the scheme \eqref{eqn:general_scheme} in matrix form}
	\begin{align*}
		\mathbf{u}^{n+1}=\mathbf{\Phi}_{\tau}(\mathbf{u}^{n}),
	\end{align*}
	where $\mathbf{u}^{n}:=(u^{n},u^{n-1})^{\rm T}, n\geq 1$, and we introduced the new numerical flow $\mathbf{\Phi}_{\tau}$ as
	\begin{align}\label{eqn:vectoral_form_time_stepper}
		\mathbf{\Phi}_{\tau}(\mathbf{u}^{n})= \mathbf{\Phi}_{\tau}\!\begin{pmatrix}u^{n}\\u^{n-1}\end{pmatrix}:=\begin{pmatrix}\widetilde{\Phi}_{\tau}(u^n)-e^{2i\tau\Delta}\widetilde{\Phi}_{-\tau}(u^n)+e^{2i\tau\Delta}u^{n-1}\\
			u^{n}
		\end{pmatrix}.
	\end{align}
	In the following, we will make use of the product norm on $H^{\alpha}\times H^{\alpha}$ (which we shall simply denote by $\|\,\cdot\,\|_{H^\alpha}$ as it is clear form context which norm we refer to), i.e. $\| (v, w)^{\rm T}\|_{H^\alpha}:=\sqrt{\|v\|_{H^\alpha}^2+\|w\|_{H^\alpha}^2}.$
	
	\subsection{General framework of convergence analysis}
	\label{sec:general_convergence}
	We begin with proving stability of our two-step integrator. 
	\begin{proposition}[Stability]\label{prop:stability_multistep} Suppose $\Phi_{\tau}$ satisfies \cref{assumption:general_properties_higher_order_schemes} (i) for some $\alpha\geq 0$. Then there is a continuous function $C$ such that, for any $v_0,v_1,w_0,w_1\in H^{\alpha}$ the method \eqref{eqn:vectoral_form_time_stepper} satisfies, with $\mathbf v = (v_1, v_0)^{\rm T}$ and $\mathbf w = (w_1, w_0)^{\rm T}$, 
		\begin{align}\label{eqn:stability_estimate_vectoral_form}
			\left\|\mathbf{\Phi}_{\tau}(\mathbf v)-\mathbf{\Phi}_{\tau}(\mathbf w)\right\|_{H^\alpha}\leq (1+\tau C(\|v_1\|_{H^\alpha},\|w_1\|_{H^\alpha}))\left\|\mathbf{v}-\mathbf{w}\right\|_{H^\alpha}.
		\end{align}
	\end{proposition}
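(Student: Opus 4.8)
The plan is to unravel the definition of $\mathbf{\Phi}_\tau$ in \eqref{eqn:vectoral_form_time_stepper} and estimate the two components of $\mathbf{\Phi}_{\tau}(\mathbf v)-\mathbf{\Phi}_{\tau}(\mathbf w)$ separately in $H^\alpha$, then recombine via the product-norm definition $\|(a,b)^{\rm T}\|_{H^\alpha}^2 = \|a\|_{H^\alpha}^2+\|b\|_{H^\alpha}^2$. The second component is trivial: it equals $v_1-w_1$, so its contribution is exactly $\|v_1-w_1\|_{H^\alpha}$. For the first component, write
\[
\big(\widetilde{\Phi}_{\tau}(v_1)-e^{2i\tau\Delta}\widetilde{\Phi}_{-\tau}(v_1)+e^{2i\tau\Delta}v_0\big) - \big(\widetilde{\Phi}_{\tau}(w_1)-e^{2i\tau\Delta}\widetilde{\Phi}_{-\tau}(w_1)+e^{2i\tau\Delta}w_0\big),
\]
and split it as $\big(\widetilde\Phi_\tau(v_1)-\widetilde\Phi_\tau(w_1)\big) - e^{2i\tau\Delta}\big(\widetilde\Phi_{-\tau}(v_1)-\widetilde\Phi_{-\tau}(w_1)\big) + e^{2i\tau\Delta}(v_0-w_0)$. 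Since $e^{2i\tau\Delta}$ is a unitary (isometric) operator on $H^\alpha$, the last term contributes $\|v_0-w_0\|_{H^\alpha}$ and the middle term is bounded in $H^\alpha$ by $\|\widetilde\Phi_{-\tau}(v_1)-\widetilde\Phi_{-\tau}(w_1)\|_{H^\alpha}$. Now apply \cref{assumption:general_properties_higher_order_schemes}~(i), i.e. \eqref{eqn:stability_general_tree_based-scheme}, to both the $+\tau$ and $-\tau$ flows: each is bounded by $\tau M(\|v_1\|_{H^\alpha},\|w_1\|_{H^\alpha})\|v_1-w_1\|_{H^\alpha}$. Hence the first component is bounded in $H^\alpha$ by
\[
\|v_0-w_0\|_{H^\alpha} + 2\tau M(\|v_1\|_{H^\alpha},\|w_1\|_{H^\alpha})\,\|v_1-w_1\|_{H^\alpha}.
\]

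It then remains to combine the two component bounds into the product norm. Using $\sqrt{a^2+b^2}\le a+b$ for nonnegative $a,b$ (or, more carefully, $(x+y)^2 \le (1+\tau c)^2(x^2+y^2)$-type manipulations), one gets
\[
\big\|\mathbf{\Phi}_{\tau}(\mathbf v)-\mathbf{\Phi}_{\tau}(\mathbf w)\big\|_{H^\alpha}
\le \sqrt{\big(\|v_0-w_0\|_{H^\alpha} + 2\tau M\,\|v_1-w_1\|_{H^\alpha}\big)^2 + \|v_1-w_1\|_{H^\alpha}^2}.
\]
Writing $x=\|v_1-w_1\|_{H^\alpha}$, $y=\|v_0-w_0\|_{H^\alpha}$, and $\beta = 2\tau M$, the right-hand side is $\sqrt{(y+\beta x)^2+x^2} = \sqrt{x^2+y^2 + 2\beta xy + \beta^2 x^2} \le \sqrt{(x^2+y^2)(1 + \beta + \beta^2)} \le (1+\beta)\sqrt{x^2+y^2} \cdot \sqrt{1 + \beta/(1+\beta)^2}$; more simply, using $2xy\le x^2+y^2$ and $\beta^2x^2 \le \beta^2(x^2+y^2)$, we obtain $\sqrt{(y+\beta x)^2+x^2}\le\sqrt{(1+\beta+\beta^2)(x^2+y^2)} \le (1+\tau C)\sqrt{x^2+y^2}$ for a suitable continuous $C(\|v_1\|_{H^\alpha},\|w_1\|_{H^\alpha})$ absorbing the $\tau$-dependence (valid at least for $\tau$ bounded, e.g. $\tau\le\tau_0$, so that $\beta+\beta^2 \le C'\tau$). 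This yields \eqref{eqn:stability_estimate_vectoral_form}.

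The argument is essentially routine; the only mild subtlety — which I would state carefully rather than call an obstacle — is the elementary inequality bookkeeping needed to pass from the additive per-component bounds to a multiplicative factor $(1+\tau C)$ on the product norm, ensuring the constant depends only on $\|v_1\|_{H^\alpha}$ and $\|w_1\|_{H^\alpha}$ (not on $v_0,w_0$), which it does because $v_0,w_0$ enter only through the isometry $e^{2i\tau\Delta}$ and the Lipschitz constant $M$ depends solely on the first components. I would also remark that the same proof works verbatim with the relaxed stability hypothesis \eqref{eqn:weaker_stability_estimate} in place of (i), replacing $M(\|v_1\|_{H^\alpha},\|w_1\|_{H^\alpha})$ by $M(v_1,w_1)$ throughout, which is what is actually used in the $L^2$ analysis leading to Theorem~\ref{cor:globalerrorinL2}.
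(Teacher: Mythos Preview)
Your argument is correct and rests on the same two ingredients as the paper's proof: the isometry of $e^{2i\tau\Delta}$ on $H^\alpha$ and the Lipschitz bound \eqref{eqn:stability_general_tree_based-scheme} applied to both $\widetilde\Phi_{\pm\tau}$. The only difference is tactical. You estimate the two components of $\mathbf{\Phi}_\tau(\mathbf v)-\mathbf{\Phi}_\tau(\mathbf w)$ separately and then have to massage the resulting expression $\sqrt{(y+\beta x)^2+x^2}$ back into the form $(1+\tau C)\sqrt{x^2+y^2}$, which costs you the extra $\beta^2$ term and the mild restriction $\tau\le\tau_0$. The paper instead applies the triangle inequality directly in the product norm after writing
\[
\mathbf{\Phi}_\tau(\mathbf v)-\mathbf{\Phi}_\tau(\mathbf w)=\begin{pmatrix}0&e^{2i\tau\Delta}\\1&0\end{pmatrix}(\mathbf v-\mathbf w)+\begin{pmatrix}\widetilde\Phi_\tau(v_1)-\widetilde\Phi_\tau(w_1)-e^{2i\tau\Delta}\big(\widetilde\Phi_{-\tau}(v_1)-\widetilde\Phi_{-\tau}(w_1)\big)\\0\end{pmatrix},
\]
and observes that the matrix on the left is an isometry of $H^\alpha\times H^\alpha$ (it just swaps the components and applies a unitary). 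This gives $\|\mathbf v-\mathbf w\|_{H^\alpha}+2\tau M\|v_1-w_1\|_{H^\alpha}\le(1+2\tau M)\|\mathbf v-\mathbf w\|_{H^\alpha}$ in one line, with $C=2M$ and no restriction on $\tau$. Your final remark about the relaxed hypothesis \eqref{eqn:weaker_stability_estimate} is apt and matches the paper's later use.
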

	\begin{proof}
		For the analysis it is helpful to write\vspace{-0.1cm}
		\begin{align*}
			\mathbf{\Phi}_{\tau}(\mathbf{v})=\begin{pmatrix}
				0&e^{2i\tau\Delta}\\1&0
			\end{pmatrix}\begin{pmatrix}
				v_1\\v_0
			\end{pmatrix}+\begin{pmatrix}
				\widetilde{\Phi}_{\tau}(v_1)-e^{2i\tau\Delta}\widetilde{\Phi}_{-\tau}(v_1)\\0
			\end{pmatrix}.
		\end{align*}
		Then we have
		\begin{align*}
			\left\|\mathbf{\Phi}_{\tau}(\mathbf{v})-\mathbf{\Phi}_{\tau}(\mathbf{w}))\right\|_{H^\alpha}&\leq\left\|\begin{pmatrix}
				0&e^{2i\tau\Delta}\\1&0
			\end{pmatrix}\begin{pmatrix}
				v_1-w_1\\v_0-w_0
			\end{pmatrix}\right\|_{H^\alpha}\\
			&\quad+\left\|\widetilde{\Phi}_{\tau}(v_1)-\widetilde{\Phi}_{\tau}(w_1)-(e^{2i\tau\Delta}\widetilde{\Phi}_{-\tau}(v_1)-e^{2i\tau\Delta}\widetilde{\Phi}_{-\tau}(w_1))\right\|_{H^\alpha}\\
			&\leq \left\|\mathbf{v} - \mathbf{w}\right\|_{H^\alpha}+2\tau M(\|v_1\|_{H^\alpha},\|w_1\|_{H^\alpha})\|v_1-w_1\|_{H^\alpha},
		\end{align*}
		where in the final line we used the fact that $e^{2i\tau\Delta}$ is an isometry on $H^{\alpha}$ and \eqref{eqn:stability_general_tree_based-scheme}. This clearly implies the desired bound.
	\end{proof}
	
	Then we estimate the local error of the multi-step method \cref{eqn:general_scheme}. We remain in the formulation \eqref{eqn:vectoral_form_time_stepper} and denote by $\mathbf{u}(t_n):=(u(t_n), u(t_{n-1}))^{\rm T}$ for $n \geq 1$. Then we have the following local error estimate.
	\begin{proposition}[Local error]\label{prop:local_error_multistep} 
	{Supposing $\Phi_{\tau}$ satisfies Assumption \ref{assumption:general_properties_higher_order_schemes} (ii), we have}
		\begin{equation}\label{eqn:local_error_multistep}
			\left\|\mathbf{u}(t_{n+1}) - {\mathbf{\Phi}_\tau}(\mathbf{u}(t_n))\right\|_{H^\alpha}\leq \tau^{p+1}C(\sup_{ |t|\leq\tau}\|u(t_n+t)\|_{H^{\alpha+\gamma_1}}), \  1 \leq n \leq \frac{T}{\tau}-1. 
		\end{equation}
		Furthermore, if $\Phi_{\tau}$ satisfies, in addition, Assumption \ref{assumption:general_properties_higher_order_schemes} (iii), then 
		\begin{equation}\label{eqn:improved_local_error_multistep}
			\left\|\mathbf{u}(t_{n+1}) \!-\! {\mathbf{\Phi}_\tau}(\mathbf{u}(t_n))\right\|_{H^\alpha} \!\leq\! \tau^{p+2}\widetilde C(\sup_{ |t|\leq\tau}\|u(t_n\!+\!t)\|_{H^{\alpha+\gamma_1+\gamma_2}}), \  1 \leq n \leq \frac{T}{\tau}-1. 
		\end{equation}
	\end{proposition}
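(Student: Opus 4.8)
The plan is to reduce the local error of the two-step flow $\mathbf{\Phi}_\tau$ to the combined local error $\mathcal{R}^n_\tau - e^{2i\tau\Delta}\mathcal{R}^n_{-\tau}$ of the underlying one-step method, which has already appeared in the derivation \eqref{eqn:general_scheme_with_remainder_term}. First I would evaluate the definition \eqref{eqn:vectoral_form_time_stepper} at $\mathbf{u}(t_n) = (u(t_n), u(t_{n-1}))^{\rm T}$: the second component of $\mathbf{u}(t_{n+1}) - \mathbf{\Phi}_\tau(\mathbf{u}(t_n))$ is $u(t_n) - u(t_n) = 0$, while the first component is $u(t_{n+1}) - \widetilde{\Phi}_\tau(u(t_n)) + e^{2i\tau\Delta}\widetilde{\Phi}_{-\tau}(u(t_n)) - e^{2i\tau\Delta}u(t_{n-1})$. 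By \eqref{eqn:general_scheme_with_remainder_term} this equals exactly $\mathcal{R}^n_\tau - e^{2i\tau\Delta}\mathcal{R}^n_{-\tau}$, so that, by the definition of the product norm, $\|\mathbf{u}(t_{n+1}) - \mathbf{\Phi}_\tau(\mathbf{u}(t_n))\|_{H^\alpha} = \|\mathcal{R}^n_\tau - e^{2i\tau\Delta}\mathcal{R}^n_{-\tau}\|_{H^\alpha}$.

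For the first estimate \eqref{eqn:local_error_multistep}, I would apply the triangle inequality together with the fact that $e^{2i\tau\Delta}$ is an isometry on $H^\alpha$ to bound this by $\|\mathcal{R}^n_\tau\|_{H^\alpha} + \|\mathcal{R}^n_{-\tau}\|_{H^\alpha}$. The first term is controlled by Assumption~\ref{assumption:general_properties_higher_order_schemes} (ii) applied at time index $n$, giving $\tau^{p+1}C(\sup_{t\in[t_n,t_{n+1}]}\|u(t)\|_{H^{\alpha+\gamma_1}})$; the second term is controlled by the same assumption applied at time index $n-1$ (note that $\mathcal{R}^n_{-\tau} = u(t_{n-1}) - \Phi_{-\tau}(u(t_n))$ is the backward defect naturally associated with the interval $[t_{n-1},t_n]$), giving $\tau^{p+1}C(\sup_{t\in[t_{n-1},t_n]}\|u(t)\|_{H^{\alpha+\gamma_1}})$. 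Since $C$ is increasing and $[t_{n-1},t_n]\cup[t_n,t_{n+1}] = \{t_n+t : |t|\leq\tau\}$, both terms are dominated by $\tau^{p+1}C(\sup_{|t|\leq\tau}\|u(t_n+t)\|_{H^{\alpha+\gamma_1}})$, and their sum is absorbed into a single generic increasing constant $C$, which is precisely \eqref{eqn:local_error_multistep}.

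For the improved estimate \eqref{eqn:improved_local_error_multistep} one should \emph{not} split the norm: the quantity $\|\mathcal{R}^n_\tau - e^{2i\tau\Delta}\mathcal{R}^n_{-\tau}\|_{H^\alpha}$ is exactly what Assumption~\ref{assumption:general_properties_higher_order_schemes} (iii) estimates, so one simply invokes \eqref{eqn:improved_local_error_general_tree_based-scheme} to obtain the bound $\tau^{p+2}\widetilde{C}(\sup_{t\in[t_{n-1},t_{n+1}]}\|u(t)\|_{H^{\alpha+\gamma_1+\gamma_2}})$, and rewrites the supremum as $\sup_{|t|\leq\tau}\|u(t_n+t)\|_{H^{\alpha+\gamma_1+\gamma_2}}$.

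This argument is essentially bookkeeping and I do not anticipate a genuine analytical obstacle. The only points requiring care are the identification of the first component of the two-step local defect with $\mathcal{R}^n_\tau - e^{2i\tau\Delta}\mathcal{R}^n_{-\tau}$ through \eqref{eqn:general_scheme_with_remainder_term}, and the correct index shift when applying Assumption~\ref{assumption:general_properties_higher_order_schemes} (ii) to the backward defect $\mathcal{R}^n_{-\tau}$; everything else follows from the triangle inequality, the isometry property of $e^{2i\tau\Delta}$, and monotonicity of the error functions. Note also that this is where the restriction $n\geq 1$ enters, since $\mathbf{u}(t_n)$ (and hence $\mathcal{R}^n_{-\tau}$) is only defined for $n \geq 1$; the first step $u^1$ is handled separately by the one-step local error bound in Assumption~\ref{assumption:general_properties_higher_order_schemes} (ii).
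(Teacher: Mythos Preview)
Your proof is correct and follows essentially the same approach as the paper: identify the two-step defect with $(\mathcal{R}^n_\tau - e^{2i\tau\Delta}\mathcal{R}^n_{-\tau},0)^{\rm T}$ via \eqref{eqn:general_scheme_with_remainder_term}, then invoke \eqref{eqn:local_error_general_tree_based-scheme} and \eqref{eqn:improved_local_error_general_tree_based-scheme}. The paper states the final step tersely (``the results immediately follow''), whereas you spell out the triangle inequality, the isometry of $e^{2i\tau\Delta}$, and the index shift needed to apply Assumption~\ref{assumption:general_properties_higher_order_schemes} (ii) to $\mathcal{R}^n_{-\tau}$ --- all of which is correct and only adds detail.
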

	\begin{proof} 
		Recalling \cref{eqn:vectoral_form_time_stepper}, using \eqref {eqn:general_scheme_with_remainder_term}, we have
		\begin{align*}
			&\mathbf{u}(t_{n+1}) - {\mathbf{\Phi}_\tau}(\mathbf{u}(t_n))\\
			&=\left(u(t_{n+1})-\left(\widetilde{\Phi}_{\tau}(u(t_n))-e^{2i\tau\Delta}\widetilde{\Phi}_{-\tau}(u(t_n))+e^{2i\tau\Delta} u(t_{n-1})\right), 0 \right)^{\rm T}\\
			&=\left(\mathcal{R}^n_{\tau}-e^{2i\tau\Delta}\mathcal{R}^n_{-\tau}, 0\right)^{\rm T}, \quad 1 \leq n \leq T/\tau-1. 
		\end{align*}
		The results immediately follow from \cref{eqn:local_error_general_tree_based-scheme,eqn:improved_local_error_general_tree_based-scheme}. 
	\end{proof}
	
	Combining the above stability and local error estimates, we can obtain the following global convergence result of our explicit symmetric {multi-step} methods. Define
	\begin{equation}\label{eq:Mdef}
		M_\delta = \sup_{0 \leq t \leq T} \| u(t) \|_{H^\delta}, \qquad \delta \geq 0. 
	\end{equation}
	
	\begin{theorem}[Global error]\label{thm:global_error_multistep_schemes}
		Suppose $u^{n}$ is computed using \eqref{eqn:general_scheme}, where $\Phi_{\tau}$ satisfies Assumptions~\ref{assumption:general_properties_higher_order_schemes}. (i) and (ii). For all $0<\tau\leq \tau_0$ with $\tau_0>0$ sufficiently small depending on $M_{\alpha+\gamma_1}$ and $T$, we have
		\begin{align}\label{eqn:global_error_bound1}
			\|u(t_n) - u^n\|_{H^\alpha}\leq C\left(M_{\alpha+\gamma_1}\right)\tau^{p}, \quad 0 \leq n \leq T/\tau. 
		\end{align}
		If, in addition, Assumption~\ref{assumption:general_properties_higher_order_schemes} (iii) is satisfied, then
		\begin{align}\label{eqn:global_error_bound2}
			\|u(t_n) - u^n\|_{H^\alpha}\leq C\left(M_{\alpha+\gamma_1+\gamma_2}\right)\tau^{p+1}, \quad 0 \leq n \leq T/\tau. 
		\end{align}
	\end{theorem}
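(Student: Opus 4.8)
\medskip

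The plan is to run a standard Lady Windermere's fan argument for the two-step scheme in its one-step vectorial formulation $\mathbf{u}^{n+1}=\mathbf{\Phi}_\tau(\mathbf{u}^n)$, using the stability estimate of Proposition~\ref{prop:stability_multistep} and the local error estimate of Proposition~\ref{prop:local_error_multistep}. The key quantities are the global errors $\mathbf{e}^n:=\mathbf{u}(t_n)-\mathbf{u}^n$ in the product norm $\|\cdot\|_{H^\alpha}$, and the local defects $\mathbf{d}^n:=\mathbf{u}(t_{n+1})-\mathbf{\Phi}_\tau(\mathbf{u}(t_n))$. First I would write the telescoping identity $\mathbf{e}^{n+1}=\bigl(\mathbf{\Phi}_\tau(\mathbf{u}(t_n))-\mathbf{\Phi}_\tau(\mathbf{u}^n)\bigr)+\mathbf{d}^n$, and insert on the right-hand side the stability bound \eqref{eqn:stability_estimate_vectoral_form} to obtain $\|\mathbf{e}^{n+1}\|_{H^\alpha}\leq (1+\tau C(\|u(t_n)\|_{H^\alpha},\|u^n\|_{H^\alpha}))\|\mathbf{e}^n\|_{H^\alpha}+\|\mathbf{d}^n\|_{H^\alpha}$, with $\|\mathbf{d}^n\|_{H^\alpha}\leq \tau^{p+1}C(M_{\alpha+\gamma_1})$ from \eqref{eqn:local_error_multistep} (respectively $\tau^{p+2}\widetilde C(M_{\alpha+\gamma_1+\gamma_2})$ from \eqref{eqn:improved_local_error_multistep} under Assumption~\ref{assumption:general_properties_higher_order_schemes}~(iii)).

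\medskip

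The subtlety is that the stability constant $C(\|u(t_n)\|_{H^\alpha},\|u^n\|_{H^\alpha})$ depends on the numerical solution $u^n$, which we do not yet control a priori; this is the usual circular-dependence issue in nonlinear convergence proofs, and it is the main obstacle. The standard resolution is a bootstrap / induction argument: assume inductively that $\|u^k\|_{H^\alpha}\leq M_\alpha+1$ (say) for all $k\leq n$, so that along this trajectory the stability constant is bounded by a fixed $C_T:=\sup\{C(a,b):a,b\leq M_\alpha+1\}$ depending only on $M_{\alpha}$ (hence on $M_{\alpha+\gamma_1}$) and $T$. Then the recursion becomes the linear one $\|\mathbf{e}^{n+1}\|_{H^\alpha}\leq (1+\tau C_T)\|\mathbf{e}^n\|_{H^\alpha}+\tau^{p+1}C(M_{\alpha+\gamma_1})$, which by the discrete Gronwall lemma and $(1+\tau C_T)^{n}\leq e^{C_T T}$ for $n\tau\leq T$ yields $\|\mathbf{e}^n\|_{H^\alpha}\leq e^{C_T T}\bigl(\|\mathbf{e}^1\|_{H^\alpha}+T\tau^{p}C(M_{\alpha+\gamma_1})\bigr)$. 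The starting error $\|\mathbf{e}^1\|_{H^\alpha}=\|u(t_1)-u^1\|_{H^\alpha}$ is controlled by the one-step local error bound \eqref{eqn:local_error_general_tree_based-scheme} with $n=0$ (this is exactly $\mathcal{R}^0_\tau$, which is $O(\tau^{p+1})$, hence certainly $O(\tau^p)$), so it is absorbed into the final estimate. This gives \eqref{eqn:global_error_bound1}.

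\medskip

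To close the bootstrap, one chooses $\tau_0$ small enough (depending on $M_{\alpha+\gamma_1}$ and $T$) so that $e^{C_T T}(\|\mathbf{e}^1\|_{H^\alpha}+T\tau^p C(M_{\alpha+\gamma_1}))\leq 1$ for all $\tau\leq\tau_0$; this guarantees $\|u^{n+1}\|_{H^\alpha}\leq \|u(t_{n+1})\|_{H^\alpha}+\|\mathbf{e}^{n+1}\|_{H^\alpha}\leq M_\alpha+1$, completing the induction step and justifying a posteriori the use of $C_T$ throughout. For the improved estimate \eqref{eqn:global_error_bound2}, the argument is identical except that the local defect is now $O(\tau^{p+2})$ by \eqref{eqn:improved_local_error_multistep}, the relevant Sobolev index is $\alpha+\gamma_1+\gamma_2$, and one should double-check that the first step $u^1=e^{i\tau\Delta}u_0+\widetilde\Phi_\tau(u_0)$ still contributes an error compatible with order $p+1$ — here one uses that $u(t_1)-u^1=\mathcal{R}^0_\tau=O(\tau^{p+1})$, which after multiplication by $e^{C_T T}$ is of the same order $\tau^{p+1}$ as the accumulated defect, so no loss occurs. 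I would remark that if only the weaker stability estimate \eqref{eqn:weaker_stability_estimate} is available, the same scheme works verbatim since $M(u(t_n),u^n)$ is assumed uniformly bounded by $M_T$ along the relevant trajectories once the inductive hypothesis is in force.
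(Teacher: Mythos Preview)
Your proposal is correct and follows essentially the same approach as the paper: a Lady Windermere's fan argument in the vectorial formulation, combining the stability estimate of Proposition~\ref{prop:stability_multistep} with the local error bounds of Proposition~\ref{prop:local_error_multistep}, and closing via an induction/bootstrap on $\|u^n\|_{H^\alpha}$ that requires $\tau\leq\tau_0$ small. Your write-up is in fact slightly more explicit than the paper's about the bootstrap closure and the role of the first-step error $\mathcal{R}^0_\tau$ in the improved estimate \eqref{eqn:global_error_bound2}.
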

	
	\begin{proof}
		Recall $\mathbf{u}(t_n) = (u(t_n), u(t_{n-1}))^{\rm T} $ and $ \mathbf{u}^n = (u^n, u^{n-1})^{\rm T}$, and define the error functions $\mathbf{e}^n = (e^n, e^{n-1})^{\rm T} := \mathbf{u}(t_n) - \mathbf{u}^n$ for $1 \leq n \leq T/\tau$. Then we have
		\begin{equation*}
			\|u(t_n) - u^n\|_{H^\alpha}
			\leq \| \mathbf u(t_n) - \mathbf u^n\|_{H^\alpha} = \| \mathbf e^n \|_{H^\alpha}. 
		\end{equation*}
		Thus, it is sufficient to show the bounds \eqref{eqn:global_error_bound1} and \eqref{eqn:global_error_bound2} for $\| \mathbf e^n \|_{H^\alpha}$. We proceed with a Lady Windermere's fan argument to prove \eqref{eqn:global_error_bound1}. 
		First, by \cref{eqn:local_error_general_tree_based-scheme}, 
		\begin{equation}\label{eq:firststeperror}
			\| \mathbf{e}^1 \|_{H^\alpha} = \| e^1 \|_{H^\alpha} = \| u(t_1) - \Phi_\tau(u_0) \|_{H^\alpha} = \| \mathcal{R}^0_\tau \|_{H^\alpha} \leq C(M_{\alpha+\gamma_1})\tau^{p+1}. 
		\end{equation}
		By the triangle inequality, we have, for $1 \leq n \leq T/\tau-1$, 
		\begin{align}\label{eq:error_eq_1}
			\| \mathbf e^{n+1} \|_{H^\alpha} 
			&\leq \| \mathbf{u}(t_{n+1}) - \bm{\Phi}_\tau(\mathbf{u}(t_n)) \|_{H^\alpha} + \| \bm{\Phi}_\tau(\mathbf{u}(t_n)) -\mathbf{\Phi}_\tau(\mathbf{u}^{n}) \|_{H^\alpha}. 
		\end{align}
		From \cref{eq:error_eq_1}, by \cref{prop:stability_multistep,eqn:local_error_multistep}, we get, for $1 \leq n \leq T/\tau-1$, 
		\begin{equation}\label{eq:error_eq_est}
			\| \mathbf e^{n+1} \|_{H^\alpha} \leq (1+C(M_\alpha, \| u^n \|_{H^\alpha}) \tau) \| \mathbf{e}^n \|_{H^\alpha} + C(M_{\alpha+\gamma_1}) \tau^{p+1}.  
		\end{equation}
		To conclude the proof by applying the discrete Gronwall's inequality to \cref{eq:error_eq_est} with \cref{eq:firststeperror}, we need to control $\| u^n \|_{H^\alpha} \  (0 \leq n \leq T/\tau-1)$ independently of $\tau$. This can be done by the standard induction argument with an requirement of $0 <\tau \leq \tau_0$ for some $\tau_0>0$ sufficiently small depending on $M_{\alpha+\gamma_1}$. Then we prove \cref{eqn:global_error_bound1} and  the proof of \eqref{eqn:global_error_bound2} follows analogously using \cref{eqn:improved_local_error_multistep} instead of \cref{eqn:local_error_multistep}. 
	\end{proof}
 {\begin{remark}
     Clearly, if we had used a slightly weaker stability estimate of the form \eqref{eqn:weaker_stability_estimate} in \eqref{eq:error_eq_est} then the global convergence result would still similarly follow. This type of argument is typically used for convergence analysis in $L^2$ and is presented for example in the proof of Theorem~\ref{cor:globalerrorinL2}.
 \end{remark}}

	In the following part, we will illustrate the workings of this general framework on two examples, sLRI1 \eqref{eq:sLRI1} and sLRI2 \eqref{eq:sLRI2}.
	
	\subsection{{Convergence analysis for sLRI1}}\label{sec:application_to_sLRI1}
	In this subsection, we establish the global error estimates for the sLRI1 \cref{eq:sLRI1}. We start with the error estimate in $H^\alpha$-norm with $\alpha > d/2$ by following the general framework in the previous subsection. Then we further push down the error estimates to $L^2$-norm and $H^1$-norm (when $d=2, 3$), which are natural norms associated with the NLSE in terms of mass and energy. Particular attention is paid to the analysis of the improved convergence order (i.e. the order greater than one) due to the symmetry of the scheme. 
	
	By {Lemmas} 3.1 \& 3.2 in \cite{ostermann_schratz2018low}, the numerical flow $\Phi^\text{LRI1}_{\tau}$ of LRI1 \cref{eqn:ostermann_schratz_lri} satisfies Assumptions~\ref{assumption:general_properties_higher_order_schemes} (i) and (ii) for any $\alpha>d/2$ and $p=1$ with $\gamma_1 = p$. 
	{Note, in principle the analysis in \cite{ostermann_schratz2018low} studies the local truncation error $\mathcal{R}^n_{\tau}$ and the stability of the scheme only for non-negative values of $\tau$, while Assumptions~\ref{assumption:general_properties_higher_order_schemes} (i) \& (ii) require a similar bound when $\tau\leq 0$. This can be established analogously to the original estimates in \cite{ostermann_schratz2018low} and we therefore omit the details here.} A direct application of \cref{thm:global_error_multistep_schemes} implies the following first-order convergence of the sLRI1 \cref{eq:sLRI1} in $H^\alpha$-norm. 
	\begin{corollary}
		Let $\alpha>d/2$. Suppose $u^n$ is computed using \cref{eq:sLRI1}. For all $0<\tau\leq \tau_0$ with $\tau_0>0$ sufficiently small depending on $M_{\alpha+1}$ and $T$,
		\begin{equation*}
			\|u(t_n) - u^n\|_{H^\alpha} \leq C(M_{\alpha+1})\tau, \quad 0 \leq n \leq T/\tau.
		\end{equation*}
	\end{corollary}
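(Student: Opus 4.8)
The plan is to obtain this corollary as an immediate consequence of the general global error estimate in Theorem~\ref{thm:global_error_multistep_schemes}. First I would record the (already noted) observation that the sLRI1 scheme \eqref{eq:sLRI1} coincides exactly with the general two-step scheme \eqref{eqn:general_scheme} in which $\Phi_\tau = \Phi^\text{LRI1}_\tau$ is the first-order integrator of Ostermann \& Schratz, with forward and backward flows given in \eqref{LRI1pm}; correspondingly $\widetilde\Phi_{\pm\tau}(v) = \mp i\mu\tau e^{\pm i\tau\Delta}\bigl(v^2\varphi_1(\mp 2i\tau\Delta)\overline v\bigr)$. It therefore suffices to verify that $\Phi^\text{LRI1}_{\pm\tau}$ satisfies Assumption~\ref{assumption:general_properties_higher_order_schemes} (i) and (ii) with $p=1$ and $\gamma_1 = 1$ for $\alpha > d/2$, and then to apply Theorem~\ref{thm:global_error_multistep_schemes}, which yields precisely $\|u(t_n)-u^n\|_{H^\alpha}\le C(M_{\alpha+1})\tau$ for all $0<\tau\le\tau_0$ with $\tau_0$ depending on $M_{\alpha+1}$ and $T$.

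The verification of Assumption~(i) and (ii) is, for the forward flow, exactly Lemmas~3.1 and 3.2 of \cite{ostermann_schratz2018low}: the stability bound \eqref{eqn:stability_general_tree_based-scheme} follows because $e^{i\tau\Delta}$ is an isometry on $H^\alpha$, the Fourier multiplier $\varphi_1(-2i\tau\Delta)$ is bounded uniformly in $\tau$, and $v\mapsto v^2\overline v$ is locally Lipschitz on $H^\alpha$ since $H^\alpha$ is a Banach algebra for $\alpha>d/2$, the explicit prefactor $\tau$ supplying the required factor of $\tau$; the second-order local error follows from iterating Duhamel's formula once (the second Picard iterate contributing an $O(\tau^2)$ remainder with no derivative loss) and from replacing $\exp(is(\mathcal L_\text{dom}+\mathcal L_\text{low}))$ by $\exp(is\mathcal L_\text{dom})$, the latter costing one additional derivative because $\mathcal L_\text{low}$ is linear rather than quadratic in each of $\mathbf k_1,\mathbf k_2,\mathbf k_3$, i.e. $\gamma_1 = 1$.

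The only point needing a word of justification --- and the main, albeit minor, obstacle --- is that the estimates in \cite{ostermann_schratz2018low} are stated only for $\tau\ge 0$, whereas Assumption~\ref{assumption:general_properties_higher_order_schemes} also asks for the corresponding bounds on the backward flow $\Phi^\text{LRI1}_{-\tau}$ and on $\mathcal R^{n+1}_{-\tau}$. Here I would simply observe that $\widetilde\Phi_{-\tau}$ has the identical algebraic structure, with $e^{-i\tau\Delta}$ still an isometry and $\varphi_1(2i\tau\Delta)$ still bounded uniformly for $|\tau|\le\tau_0$, and that the Duhamel-iteration argument runs over $s\in[-\tau,0]$ in place of $s\in[0,\tau]$ without any change; hence the stability and local error estimates carry over verbatim. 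Since these are precisely the hypotheses required by Theorem~\ref{thm:global_error_multistep_schemes} for the first-order bound \eqref{eqn:global_error_bound1}, the corollary follows at once. The refined estimate \eqref{eqn:improved_local_error_general_tree_based-scheme} exploiting the oddness of $p$, which would upgrade the order to two, is not needed for this statement and is deferred to the remainder of Section~\ref{sec:application_to_sLRI1}.
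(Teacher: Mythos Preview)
Your proposal is correct and follows essentially the same approach as the paper: verify that $\Phi^{\mathrm{LRI1}}_{\pm\tau}$ satisfies Assumption~\ref{assumption:general_properties_higher_order_schemes} (i) and (ii) with $p=1$, $\gamma_1=1$ for $\alpha>d/2$ by invoking Lemmas~3.1 and~3.2 of \cite{ostermann_schratz2018low} (noting that the backward-flow estimates follow by the identical argument), and then apply Theorem~\ref{thm:global_error_multistep_schemes}. The paper proceeds in exactly this way, with the same remark about extending the Ostermann--Schratz estimates to $\tau\leq 0$.
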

	Due to the symmetry of the scheme, the sLRI1 is of second order and satisfies the improved local error estimate in \cref{assumption:general_properties_higher_order_schemes} (iii) albeit with a larger regularity requirement. In fact, we have the following fractional-order improvement of the local truncation error of the sLRI1 \cref{eq:sLRI1} beyond \cref{eqn:improved_local_error_general_tree_based-scheme}.  
	\begin{proposition}\label{thm:sLRI_decomp}
		Let $\alpha>d/2$ and $0 \leq \gamma \leq 1$. For the sLRI1 \cref{eq:sLRI1}, we have
		\begin{equation*}
			\| \mathcal{R}^n_{\tau}-e^{2i\tau\Delta}\mathcal{R}^n_{-\tau} \|_{H^\alpha} \leq C(M_{\alpha+1+2\gamma}) \tau^{2+\gamma}, \quad 1 \leq n \leq T/\tau-1. 
		\end{equation*}
	\end{proposition}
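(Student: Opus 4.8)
The plan is to compute the difference $\mathcal{R}^n_{\tau} - e^{2i\tau\Delta}\mathcal{R}^n_{-\tau}$ explicitly by tracking, through the derivation in Section~\ref{sec:idea_on_simple_integrator}, exactly which approximation steps introduce error and with what accuracy. Recall from \eqref{eqn:ostermann_schratz_lri}–\eqref{LRI1pm} that $\widetilde\Phi_{\pm\tau}(v) = \mp i\mu\tau e^{\pm i\tau\Delta}(v^2\varphi_1(\mp 2i\tau\Delta)\overline v)$, and that the exact solution satisfies \eqref{eq:exact}. Hence $\mathcal{R}^n_{\tau} - e^{2i\tau\Delta}\mathcal{R}^n_{-\tau}$ equals the difference between the exact nonlinear contribution $-i\mu e^{i\tau\Delta}I^\tau$ in \eqref{eq:exact} (with $I^\tau = \int_{-\tau}^{\tau} e^{-is\Delta}(u(t_n+s)^2\overline{u(t_n+s)})\,ds$) and its sLRI1 approximation $-2i\mu\tau e^{i\tau\Delta}(u(t_n)^2\varphi_\mathrm{s}(2\tau\Delta)\overline{u(t_n)})$. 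Since $e^{i\tau\Delta}$ is an isometry on $H^\alpha$, it suffices to estimate $\|I^\tau - 2\tau(u(t_n)^2)\varphi_\mathrm{s}(2\tau\Delta)\overline{u(t_n)}\|_{H^\alpha}$.

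First I would split this into two pieces: (a) $I^\tau - \widetilde I^\tau$, where $\widetilde I^\tau$ is the once-iterated Duhamel approximation in \eqref{eq:int1} (replacing $u(t_n+s)$ by $e^{is\Delta}u(t_n)$); and (b) $\widetilde I^\tau - 2\tau(u(t_n)^2)\varphi_\mathrm{s}(2\tau\Delta)\overline{u(t_n)}$, the error from dropping $\mathcal{L}_\mathrm{low}$ in $\exp(is(\mathcal{L}_\mathrm{dom}+\mathcal{L}_\mathrm{low}))\approx\exp(is\mathcal{L}_\mathrm{dom})$. For (a), I would substitute Duhamel's formula \eqref{eq:duhamel_exact} with $\zeta = s$ into $I^\tau$; the discarded remainder involves at least one further time integral of a nonlinearity, each such integral contributes a factor $\tau$, and $s$ ranges over $[-\tau,\tau]$, so the omitted double-integral terms are $O(\tau^3)$ in $H^\alpha$ — this uses the algebra property of $H^\alpha$ for $\alpha>d/2$ and costs no extra derivatives, so it fits within $C(M_{\alpha})\tau^3$. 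Crucially, this step is already symmetric under $s\mapsto -s$ combined with the $e^{2i\tau\Delta}$ twist built into the difference, which is why we get $\tau^3$ rather than $\tau^2$ here; I would make this precise by noting that the leading single-integral term of $I^\tau-\widetilde I^\tau$, being even in $s$ after the appropriate unwinding, cancels in the symmetrised combination — or, more simply, absorb everything into (b) and observe that in the fully symmetric formulation \eqref{eq:exact_symm} there is no separate "first iteration" error to symmetrise.

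The main obstacle — and the step I would spend the most care on — is (b), the frequency-truncation error, because this is where the trade-off between convergence order and regularity ($\gamma$ vs.\ $2\gamma$ extra derivatives) arises. In Fourier space the per-mode error is $\int_{-\tau}^{\tau} e^{is\mathcal{L}_\mathrm{dom}}(e^{is\mathcal{L}_\mathrm{low}} - 1)\,ds$ acting on $\widehat u_{\mathbf k_1}\widehat u_{\mathbf k_2}\overline{\widehat u}_{\mathbf k_3}$. The key quantitative observation is that $|e^{is\mathcal{L}_\mathrm{low}}-1|\le \min(2, |s|\,|\mathcal{L}_\mathrm{low}|) \le 2^{1-\gamma}(|s|\,|\mathcal{L}_\mathrm{low}|)^\gamma \le C\tau^\gamma |\mathcal{L}_\mathrm{low}|^\gamma$ for any $\gamma\in[0,1]$, so after integrating in $s$ over an interval of length $2\tau$ we gain $\tau^{1+\gamma}$, for a total of $\tau^{2+\gamma}$. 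It then remains to show that the factor $|\mathcal{L}_\mathrm{low}|^\gamma = 2^\gamma|\mathbf k_1\cdot\mathbf k_2 - \mathbf k_1\cdot\mathbf k_3 - \mathbf k_2\cdot\mathbf k_3|^\gamma$ can be distributed among the three factors at a cost of $\gamma$ extra Sobolev derivatives on \emph{two} of them (hence $2\gamma$ overall, matching $M_{\alpha+1+2\gamma}$): using $\mathbf k = \mathbf k_1 + \mathbf k_2 - \mathbf k_3$ one bounds $|\mathcal{L}_\mathrm{low}| \lesssim |\mathbf k|^2 + |\mathbf k_1|^2 + |\mathbf k_2|^2 + |\mathbf k_3|^2$, whence $|\mathcal{L}_\mathrm{low}|^\gamma \lesssim \langle\mathbf k\rangle^{2\gamma} + \sum_j \langle\mathbf k_j\rangle^{2\gamma}$; the $\langle\mathbf k\rangle^{2\gamma}$ term is controlled by the outer $H^\alpha$ norm provided we have $u(t_n)\in H^{\alpha+2\gamma}$ (too strong), so instead one must exploit a discrete convolution/bilinear estimate to move at most $\gamma$ derivatives each onto two of $\widehat u_{\mathbf k_1},\widehat u_{\mathbf k_2},\overline{\widehat u}_{\mathbf k_3}$ — this is exactly the Kato–Ponce / fractional-Leibniz type bound, together with $H^\alpha$ being an algebra, that yields the $H^{\alpha+1+2\gamma}$ requirement once one also accounts for the $\tau$ already extracted and the factor $1$ from $\gamma_1 = p = 1$ in the base LRI1 local error. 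Finally, I would note $\sup_{|t|\le\tau}\|u(t_n+t)\|_{H^{\alpha+1+2\gamma}} \le M_{\alpha+1+2\gamma}$ on $[0,T]$ and assemble the two pieces, giving $\|\mathcal{R}^n_\tau - e^{2i\tau\Delta}\mathcal{R}^n_{-\tau}\|_{H^\alpha}\le C(M_{\alpha+1+2\gamma})\tau^{2+\gamma}$, which at $\gamma=0$ recovers Assumption~\ref{assumption:general_properties_higher_order_schemes} (iii) with $\gamma_2 = 2$ and at $\gamma=1$ gives the third-order-locally (second-order-globally) estimate.
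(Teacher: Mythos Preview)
Your counting in part (b) is off by one power of $\tau$, and this is a genuine gap. The interpolation bound $|e^{is\mathcal{L}_{\mathrm{low}}}-1|\le C|s|^\gamma|\mathcal{L}_{\mathrm{low}}|^\gamma$ integrated over $s\in[-\tau,\tau]$ yields only $C\tau^{1+\gamma}|\mathcal{L}_{\mathrm{low}}|^\gamma$, not $\tau^{2+\gamma}$; there is no additional factor of $\tau$ in front of $I^\tau$ in the scheme. In other words, your bound never uses the symmetry of the interval $[-\tau,\tau]$ --- it would give exactly the same estimate on $[0,2\tau]$, where we know only $\tau^2$ (with one extra derivative) is available. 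To extract the extra $\tau^\gamma$ one must first secure the baseline $\tau^2$ (costing one derivative, as in the non-symmetric LRI1 local error) and \emph{then} exploit the cancellation under $s\mapsto -s$; a direct Fourier route would require writing $e^{is\mathcal{L}_{\mathrm{low}}}-1 = is\mathcal{L}_{\mathrm{low}} + R(s)$, using $\int_{-\tau}^\tau s\,ds=0$ to replace $\int_{-\tau}^\tau s e^{is\mathcal{L}_{\mathrm{dom}}}ds$ by $\int_{-\tau}^\tau s(e^{is\mathcal{L}_{\mathrm{dom}}}-1)ds = O(\tau^{2+\gamma}|\mathcal{L}_{\mathrm{dom}}|^\gamma)$, and bounding $|R(s)|\le C|s|^{1+\gamma}|\mathcal{L}_{\mathrm{low}}|^{1+\gamma}$ --- which then does reproduce the $H^{\alpha+1+2\gamma}$ requirement, since the worst term is $|\mathcal{L}_{\mathrm{low}}|\,|\mathcal{L}_{\mathrm{dom}}|^\gamma\lesssim |\mathbf{k}_i|\,|\mathbf{k}_3|^{1+2\gamma}$. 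Your sketch does none of this.

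Part (a) is also not right as stated: the claim that $I^\tau-\widetilde I^\tau = O(\tau^3)$ in $H^\alpha$ with no extra derivatives is false. Naively the Duhamel remainder gives only $O(\tau^2)$, and the improvement to $O(\tau^{2+\gamma})$ again comes from a symmetry cancellation that costs $2\gamma$ derivatives (it requires bounding expressions of the form $(e^{is\Delta}-e^{-is\Delta})w$, which is not small in $H^\alpha$ without paying regularity). The paper handles both parts uniformly by introducing filtered functions $N(\zeta,s)$ and $\Gamma(\zeta,s)$, writing each error term as $\int_0^\tau\int_0^s[\partial_\zeta N(\zeta,s)-\partial_\zeta N(-\zeta,-s)]\,d\zeta\,ds$ (and similarly for $\Gamma$); the double integral supplies $\tau^2$, the $\partial_\zeta$ costs one derivative, and the symmetrised difference $\mathcal{E}_j$ is then shown to be $O(\tau^\gamma)$ at a cost of $2\gamma$ further derivatives via $\|(e^{it\Delta}-I)v\|_{H^\alpha}\lesssim t^\gamma\|v\|_{H^{\alpha+2\gamma}}$.
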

	As an immediate consequence of \cref{thm:sLRI_decomp,thm:global_error_multistep_schemes}, we have the following improved global convergence of the sLRI1. 
	\begin{corollary}\label{thm:improvedhigh}
		Let $\alpha>d/2$ and $0 \leq \gamma \leq 1$. Suppose $u^{n}$ is computed using \eqref{eq:sLRI1}. For all $0<\tau\leq \tau_0$ with $\tau_0>0$ depending on $M_{\alpha+1+2\gamma}$ and $T$, 
		\begin{align*}
			\|u(t_n) - u^n\|_{H^\alpha}\leq C(M_{\alpha+1+2\gamma})\tau^{1+\gamma}, \quad 0 \leq n \leq T/\tau.
		\end{align*}
	\end{corollary}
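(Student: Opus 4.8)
The plan is to derive \cref{thm:improvedhigh} by feeding the fractional improved local error of \cref{thm:sLRI_decomp} into the stability-plus-Lady-Windermere's-fan argument already carried out in the proof of \cref{thm:global_error_multistep_schemes}. The only caveat is that \cref{thm:global_error_multistep_schemes} is phrased for an \emph{integer} improvement of the local truncation error (via \cref{assumption:general_properties_higher_order_schemes} (iii)), whereas here the symmetrised combination $\mathcal{R}^n_\tau - e^{2i\tau\Delta}\mathcal{R}^n_{-\tau}$ is improved only by the possibly fractional power $\tau^\gamma$; since the discrete Gronwall step in that proof is insensitive to the precise exponent as long as it exceeds $p+1=2$, it suffices to re-run the same argument with $\tau^{2+\gamma}$ in place of $\tau^{p+2}$.

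Concretely, I would work in the matrix formulation \cref{eqn:vectoral_form_time_stepper}, writing $\mathbf e^n := \mathbf u(t_n) - \mathbf u^n$ and using $\|u(t_n) - u^n\|_{H^\alpha} \le \|\mathbf e^n\|_{H^\alpha}$. For the initial step, \cref{assumption:general_properties_higher_order_schemes} (ii) for LRI1 (with $p=1$, $\gamma_1 = 1$) gives $\|\mathbf e^1\|_{H^\alpha} = \|\mathcal{R}^0_\tau\|_{H^\alpha} \le C(M_{\alpha+1})\tau^2 \le C(M_{\alpha+1+2\gamma})\tau^{1+\gamma}$, using $\tau \le 1$, $\gamma \le 1$ and monotonicity of $\delta \mapsto M_\delta$. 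For $1 \le n \le T/\tau - 1$, the identity \cref{eqn:general_scheme_with_remainder_term} shows, exactly as in \cref{prop:local_error_multistep}, that $\mathbf u(t_{n+1}) - \mathbf{\Phi}_\tau(\mathbf u(t_n)) = (\mathcal{R}^n_\tau - e^{2i\tau\Delta}\mathcal{R}^n_{-\tau},\, 0)^{\rm T}$, so \cref{thm:sLRI_decomp} yields $\|\mathbf u(t_{n+1}) - \mathbf{\Phi}_\tau(\mathbf u(t_n))\|_{H^\alpha} \le C(M_{\alpha+1+2\gamma})\tau^{2+\gamma}$. Combining this with the stability bound \cref{eqn:stability_estimate_vectoral_form} of \cref{prop:stability_multistep} and the triangle inequality gives the recursion $\|\mathbf e^{n+1}\|_{H^\alpha} \le (1 + \tau C(\|u(t_n)\|_{H^\alpha}, \|u^n\|_{H^\alpha}))\|\mathbf e^n\|_{H^\alpha} + C(M_{\alpha+1+2\gamma})\tau^{2+\gamma}$.

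It then remains to control $\|u^n\|_{H^\alpha}$ uniformly in $\tau$, which, as in the proof of \cref{thm:global_error_multistep_schemes}, is handled by a standard induction: assuming $\|u^k\|_{H^\alpha} \le M_\alpha + 1$ for all $k$ up to the current step (so that the stability constant is bounded by a fixed $C = C(M_\alpha)$), the recursion together with $\|\mathbf e^1\|_{H^\alpha} \le C(M_{\alpha+1+2\gamma})\tau^{1+\gamma}$ and discrete Gronwall's inequality gives $\|\mathbf e^{n+1}\|_{H^\alpha} \le e^{CT}\big(\|\mathbf e^1\|_{H^\alpha} + \tfrac{T}{\tau}\,C(M_{\alpha+1+2\gamma})\tau^{2+\gamma}\big) \le C(M_{\alpha+1+2\gamma})\tau^{1+\gamma}$; taking $\tau_0$ small (depending on $M_{\alpha+1+2\gamma}$ and $T$) makes this $< 1$, whence $\|u^{n+1}\|_{H^\alpha} \le \|u(t_{n+1})\|_{H^\alpha} + \|\mathbf e^{n+1}\|_{H^\alpha} \le M_\alpha + 1$ and the induction closes. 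This yields the claimed bound $\|u(t_n) - u^n\|_{H^\alpha} \le C(M_{\alpha+1+2\gamma})\tau^{1+\gamma}$ for $0 \le n \le T/\tau$.

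Given \cref{thm:sLRI_decomp}, there is essentially no obstacle in this corollary: the only work is the bookkeeping just described, and no estimate beyond \cref{thm:sLRI_decomp}, \cref{prop:stability_multistep} and \cref{prop:local_error_multistep} is needed. The genuine difficulty of the development sits entirely in the proof of \cref{thm:sLRI_decomp}, namely in extracting the fractional gain $\tau^\gamma$ in the symmetrised local truncation error at the cost of $2\gamma$ extra derivatives.
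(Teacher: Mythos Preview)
Your proposal is correct and follows exactly the paper's approach: the paper states the corollary as an immediate consequence of \cref{thm:sLRI_decomp} and \cref{thm:global_error_multistep_schemes}, and you have simply spelled out the (routine) observation that the Gronwall argument in the proof of \cref{thm:global_error_multistep_schemes} goes through verbatim when the integer-order improved local error \cref{eqn:improved_local_error_general_tree_based-scheme} is replaced by the fractional-order bound of \cref{thm:sLRI_decomp}. Your remark that the genuine difficulty lies entirely in \cref{thm:sLRI_decomp} is also in line with the paper's presentation.
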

	The results in \cref{thm:sLRI_decomp,thm:improvedhigh} indicate that the {additional $\gamma$-th order convergence from symmetry requires $2\gamma$ \revisions{additional bounded derivatives of the exact solution.}} 
	\begin{proof}[Proof of \cref{thm:sLRI_decomp}]
		Recalling \cref{eq:duhamel_exact,eqn:ostermann_schratz_lri}, we have
		\begin{align}
			\mathcal{R}^n_\tau 
			&= u(t_{n+1}) - \Phi^\text{LRI1}_\tau(u(t_n)) \notag \\
			&= -i\mu e^{i \tau \Delta} \int_0^\tau \left[e^{-is\Delta} (u(t_n+s)^2 \overline{u(t_n+s)}) - u(t_n)^2 e^{-2is\Delta} \overline{u(t_n)} \right] \rmd s. 
		\end{align}
		Then the local truncation error of the sLRI1 scheme can be represented as
		\begin{align}\label{eq:LTEsLRI1}
			\mathcal{L}^n 
			&:= \mathcal{R}^n_{\tau}-e^{2i\tau\Delta}\mathcal{R}^n_{-\tau} \notag \\
			&= -i\mu e^{i \tau \Delta} \int_{-\tau}^\tau \left[e^{-is\Delta} (u(t_n+s)^2 \overline{u(t_n+s)}) - u(t_n)^2 e^{-2is\Delta} \overline{u(t_n)} \right] \rmd s. 
		\end{align}
		From \cref{eq:LTEsLRI1}, $\mathcal{L}^n$ can be decomposed as
		$\mathcal{L}^n = - i \mu e^{i \tau \Delta }(r_1^n + r_2^n)$, 
		where\vspace{-0.2cm}
		\begin{align}
			&r_1^n = \int_{-\tau}^\tau \left[e^{- i s \Delta} \left((e^{is\Delta}u(t_n))^2 e^{-is\Delta}\overline{u(t_n)} \right) - u(t_n)^2 e^{-2is\Delta} \overline{u(t_n)} \right] \rmd s, \label{eq:r1}\\
			&r_2^n = \int_{-\tau}^\tau e^{- i s \Delta} \left[ u(t_n+s)^2 \overline{u(t_n+s)} -  (e^{is\Delta}u(t_n))^2 e^{-is\Delta}\overline{u(t_n)} \right] \rmd s. \label{eq:r2}
		\end{align}
		
		We start with the estimate of $r^n_1$. Define the filtered function
		\begin{equation}\label{eq:N_def}
			N(\zeta, s) = e^{- i \zeta \Delta} \left((e^{i\zeta\Delta}u(t_n))^2 e^{i(\zeta - 2s)\Delta}\overline{u(t_n)} \right) ,\quad 0 \leq |\zeta| \leq |s| \leq \tau.  
		\end{equation}
		For $r^n_1$ defined in \cref{eq:r1}, we have
		\begin{align*}
			r_1^n 
			&= \int_{-\tau}^\tau \left[ N(s, s) - N(0, s) \right] \rmd s = \int_0^\tau \int_0^s \partial_\zeta N(\zeta, s) \rmd \zeta \rmd s - \int_{-\tau}^0 \int_s^0 \partial_{\zeta} N(\zeta, s) \rmd \zeta \rmd s \\
			&= \int_0^\tau \int_0^s \partial_\zeta N(\zeta, s) \rmd \zeta \rmd s - \int_0^\tau \int_0^s \partial_{\zeta} N(-\zeta, -s) \rmd \zeta \rmd s \\ 
			&= \int_0^\tau \int_0^s \left[\partial_\zeta N(\zeta, s) - \partial_{\zeta} N(-\zeta, -s)\right] \rmd \zeta \rmd s. 
		\end{align*}
		It follows immediately that
		\begin{equation}
			\| r_1^n \|_{H^\alpha} \leq \frac{\tau^2}{2} \sup_{0 \leq \zeta \leq s \leq \tau} \| \partial_\zeta N(\zeta, s) - \partial_{\zeta} N(-\zeta, -s) \|_{H^\alpha}, \quad \alpha \geq 0. 
		\end{equation}
		Due to the symmetry of the scheme, we obtain a cancellation term
		\begin{equation}\label{eq:E1_def}
			\mathcal{E}_1:= \partial_\zeta N(\zeta, s) - \partial_{\zeta} N(-\zeta, -s), 
		\end{equation}
		which can yield an increment of the order if additional regularity of the solution $u$ is satisfied. 
		In the following, we estimate $r_2^n$ in \cref{eq:r2}. We have, by letting $F(u) = |u|^2u$, \vspace{-0.4cm}
		\begin{equation}\label{eq:Gamma_def}
			\Gamma(\zeta, s) = F(e^{i(s-\zeta)\Delta}u(t_n + \zeta)), \quad 0 \leq |\zeta| \leq |s| \leq \tau.   
		\end{equation}
		Then we have, similar to $r^n_1$ above, 
		\begin{align*}
			r_2^n 
			&= \!\int_{-\tau}^\tau\!\!\!e^{- i s \Delta} (\Gamma(s, s) - \Gamma(0, s))\rmd s =\!\!\int_{0}^\tau\!\!\!\!e^{- i s \Delta}\!\! \int_0^s\!\!\!\! \partial_\zeta \Gamma(\zeta, s) \rmd \zeta \rmd s-\!\!\int_{-\tau}^0\!\!\!\!\! e^{- i s \Delta}\!\! \int_s^0\!\!\! \partial_\zeta \Gamma(\zeta, s) \rmd \zeta \rmd s \\
			&= \int_{0}^\tau \int_0^s \left[e^{- i s \Delta} \partial_\zeta \Gamma(\zeta, s)- e^{i s \Delta} \partial_\zeta \Gamma(-\zeta, -s) \right] \rmd \zeta \rmd s. 
		\end{align*}
		It follows that 
		\begin{equation}
			\| r_2^n \|_{H^\alpha} \leq \frac{\tau^2}{2} \sup_{0 \leq \zeta \leq s \leq \tau} \| e^{- i s \Delta} \partial_\zeta \Gamma(\zeta, s)- e^{i s \Delta} \partial_\zeta \Gamma(-\zeta, -s) \|_{H^\alpha}, \quad \alpha \geq 0. 
		\end{equation}
		Here, we obtain another cancellation term
		\begin{equation}\label{eq:E2_def}
			\mathcal{E}_2: = e^{- i s \Delta} \partial_\zeta \Gamma(\zeta, s)- e^{i s \Delta} \partial_\zeta \Gamma(-\zeta, -s),
		\end{equation}
		which can also increase the order under additional regularity of $u$. The conclusion follows from the following \cref{lem:E1andE2} immediately. 
	\end{proof}
	
	In the following, we shall frequently use the bilinear estimate
	\begin{equation}\label{eq:bi}
		\| v w \|_{H^\alpha} \lesssim \| v \|_{H^\alpha} \| w \|_{H^\alpha}, \qquad \alpha > d/2, 
	\end{equation}
	the isometry property of $e^{it\Delta}$ on $H^{\alpha}, \alpha\geq0,$ and the standard fractional estimate {(cf. Lemma 4.1 in \cite{cao2024new})}
	\begin{equation}\label{eq:exp_est}
		\| (e^{it\Delta} - I)v \|_{H^{\alpha_1}} \lesssim t^\frac{\alpha_2}{2} \| v \|_{H^{\alpha_1+\alpha_2}}, \qquad \alpha_1 \geq 0, \quad 0 \leq \alpha_2 \leq 2, \quad{t\geq 0}. 
	\end{equation}
	
	\begin{lemma}\label{lem:E1andE2}
		We have, for $\alpha>d/2$ and $0 \leq \gamma \leq 1$, 
		\begin{equation*}
			\| \mathcal{E}_1 \|_{H^\alpha} \leq C(M_{\alpha + 1+ 2\gamma}) \tau^{\gamma}, \quad \| \mathcal{E}_2 \|_{H^\alpha} \leq C(M_{\alpha + 2\gamma}) \tau^{\gamma}. 
		\end{equation*}
	\end{lemma}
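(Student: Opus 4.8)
\textbf{Proof plan for Lemma~\ref{lem:E1andE2}.}
The plan is to estimate $\mathcal{E}_1$ and $\mathcal{E}_2$ separately by computing the relevant $\zeta$-derivatives explicitly and then exploiting the cancellation between the $(\zeta,s)$ and $(-\zeta,-s)$ contributions, which produces factors of the form $(e^{it\Delta}-I)$ that we convert into powers of $\tau$ via the fractional estimate \eqref{eq:exp_est}. First I would treat $\mathcal{E}_1$. Differentiating $N(\zeta,s)$ from \eqref{eq:N_def} in $\zeta$ using the product rule and the identity $\partial_\zeta e^{i\zeta\Delta} = i\Delta e^{i\zeta\Delta}$, each term of $\partial_\zeta N(\zeta,s)$ carries either a factor $i\Delta e^{i\zeta\Delta}u(t_n)$ paired with lower-regularity factors, or an outer factor $-i\Delta e^{-i\zeta\Delta}$ hitting the whole product. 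The key point is that in the difference $\partial_\zeta N(\zeta,s) - \partial_\zeta N(-\zeta,-s)$ the leading symbols agree: replacing $\zeta\to-\zeta$, $s\to-s$ flips the signs of all the phase operators simultaneously, so the difference is, schematically, $\big(e^{i\zeta\Delta}\mathcal{A} - e^{-i\zeta\Delta}\mathcal{A}'\big)$ acting on products where $\mathcal{A}$ and $\mathcal{A}'$ differ only by analogous filtering. Writing each such difference as $(e^{i\zeta\Delta}-I)(\cdots) + (\cdots) + (I-e^{-i\zeta\Delta})(\cdots)$ and using $0\le|\zeta|\le|s|\le\tau$, the estimate \eqref{eq:exp_est} with $\alpha_2 = 2\gamma$ yields a factor $\tau^\gamma$ at the cost of $2\gamma$ extra derivatives on $u(t_n)$, while the bilinear estimate \eqref{eq:bi} controls the product of the three $H^\alpha$ (or $H^{\alpha+2\gamma}$) factors; the one factor carrying $\Delta$ already contributes $\|u(t_n)\|_{H^{\alpha+2}}$, which is why the final regularity requirement is $H^{\alpha+1+2\gamma}$ (the "$+1$" from $\Delta$ being a second-order operator but only half a power past $\alpha+2\gamma$ when $\gamma<1$ — to be safe one uses $\alpha+2+2\gamma$ bounded and notes $2+2\gamma \le 1+2\gamma$ fails, so care is needed here).

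Next I would treat $\mathcal{E}_2$. From \eqref{eq:Gamma_def}, $\Gamma(\zeta,s) = F(e^{i(s-\zeta)\Delta}u(t_n+\zeta))$ with $F(u)=|u|^2u$, so $\partial_\zeta\Gamma(\zeta,s) = DF(e^{i(s-\zeta)\Delta}u(t_n+\zeta))\big[-i\Delta e^{i(s-\zeta)\Delta}u(t_n+\zeta) + e^{i(s-\zeta)\Delta}\partial_t u(t_n+\zeta)\big]$, and using the equation $\partial_t u = i\Delta u - i\mu|u|^2u$ the two $\Delta$-terms partially cancel, leaving $\partial_\zeta\Gamma(\zeta,s) = DF(\cdots)\big[e^{i(s-\zeta)\Delta}(i\Delta u(t_n+\zeta) - i\Delta u(t_n+\zeta)) - i\mu e^{i(s-\zeta)\Delta}|u|^2u(t_n+\zeta)\big]$ — more precisely the commutator structure leaves only the nonlinear term plus a term $[e^{i(s-\zeta)\Delta},\Delta]=0$, so in fact $\partial_\zeta\Gamma(\zeta,s) = -i\mu\, DF(e^{i(s-\zeta)\Delta}u(t_n+\zeta))\big[e^{i(s-\zeta)\Delta}(|u|^2u)(t_n+\zeta)\big]$, a quantity bounded in $H^\alpha$ by $C(M_\alpha)$ with no loss of derivatives. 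Then $\mathcal{E}_2 = e^{-is\Delta}\partial_\zeta\Gamma(\zeta,s) - e^{is\Delta}\partial_\zeta\Gamma(-\zeta,-s)$, and again the sign flip $\zeta\to-\zeta$, $s\to-s$ reverses all phases, so writing $\mathcal{E}_2 = (e^{-is\Delta}-I)\partial_\zeta\Gamma(\zeta,s) + (\partial_\zeta\Gamma(\zeta,s) - \partial_\zeta\Gamma(-\zeta,-s)) + (I - e^{is\Delta})\partial_\zeta\Gamma(-\zeta,-s)$; the first and third pieces give $\tau^\gamma$ via \eqref{eq:exp_est} at the cost of $2\gamma$ derivatives, and the middle difference is itself a difference of filtered expressions that factors through $(e^{it\Delta}-I)$ operators, giving another $\tau^\gamma$, so altogether $\|\mathcal{E}_2\|_{H^\alpha}\le C(M_{\alpha+2\gamma})\tau^\gamma$ — note there is no "$+1$" here precisely because the $\Delta$ cancelled.

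The main obstacle, and the place requiring genuine care rather than routine bookkeeping, is the $\mathcal{E}_1$ estimate: unlike $\mathcal{E}_2$, there is no $\partial_t u$ available to kill the $\Delta$ that appears when differentiating the \emph{filtering} operators $e^{i\zeta\Delta}$ (these come from the scheme, not from the equation), so one genuinely carries a factor $\Delta e^{i\zeta\Delta}u(t_n)$. The careful accounting is: split $\partial_\zeta N(\zeta,s) - \partial_\zeta N(-\zeta,-s)$ into three groups according to which of the three $u(t_n)$-factors the derivative hits (plus the outer $e^{-i\zeta\Delta}$ term), and in each group isolate the single "bad" factor — for that factor one has $e^{i\zeta\Delta}\Delta u(t_n)$ minus $e^{-i\zeta\Delta}\Delta u(t_n)$ (times matching phases on the other factors), which is $(e^{2i\zeta\Delta}-I)e^{-i\zeta\Delta}\Delta u(t_n)$ up to reorganisation, and $\|(e^{2i\zeta\Delta}-I)w\|_{H^\alpha}\lesssim \zeta^\gamma\|w\|_{H^{\alpha+2\gamma}}$ with $w = e^{-i\zeta\Delta}\Delta u(t_n)$, so $\|w\|_{H^{\alpha+2\gamma}} = \|\Delta u(t_n)\|_{H^{\alpha+2\gamma}}\lesssim \|u(t_n)\|_{H^{\alpha+2+2\gamma}}$; combined with the bilinear estimate on the remaining two $H^\alpha$ factors and $|\zeta|\le\tau$ this gives $\|\mathcal{E}_1\|_{H^\alpha}\lesssim C(M_{\alpha+2+2\gamma})\tau^\gamma$. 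One then checks that the statement's $M_{\alpha+1+2\gamma}$ is what is actually needed by being more economical — e.g. only using $\alpha_2 = 2\gamma$ against one of the two $\Delta$-derivatives and keeping the estimate sharp — or else the constant in Proposition~\ref{thm:sLRI_decomp} should read $M_{\alpha+2+2\gamma}$; in any case the mechanism (cancellation $\Rightarrow$ $(e^{it\Delta}-I)$ factor $\Rightarrow$ $\tau^\gamma$ at the cost of $2\gamma$ derivatives) is exactly the content of the lemma, and the remaining steps are routine applications of \eqref{eq:bi}, \eqref{eq:exp_est}, and the isometry property together with the fact that $u\in L^\infty([0,T];H^\delta)$ for the relevant $\delta$.
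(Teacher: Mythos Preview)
Your treatment of $\mathcal{E}_2$ is essentially correct and matches the paper's approach: you recognise that the $\Delta$-contribution from differentiating the filter $e^{i(s-\zeta)\Delta}$ cancels against the $i\Delta u$ term coming from the equation, so $\partial_\zeta\Gamma$ involves no derivative loss and the cancellation $\mathcal{E}_2$ then gains $\tau^\gamma$ at the cost of only $2\gamma$ derivatives.

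However, there is a genuine gap in your argument for $\mathcal{E}_1$, and it is precisely the point you flag but then try to wave away. If you compute $\partial_\zeta N$ term by term as you describe --- an outer $-i\Delta$ from $e^{-i\zeta\Delta}$ plus an $i\Delta$ landing on each of the three factors $e^{i\zeta\Delta}u(t_n)$, $e^{i\zeta\Delta}u(t_n)$, $e^{i(\zeta-2s)\Delta}\overline{u(t_n)}$ --- and then estimate each piece separately, you will indeed be forced to control $\|\Delta u(t_n)\|_{H^{\alpha+2\gamma}}$ and end up with $M_{\alpha+2+2\gamma}$, one derivative worse than the lemma claims. This cannot be fixed by ``being more economical'' with where you spend the $(e^{it\Delta}-I)$ factor; the loss is real if you keep the $\Delta$'s.

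The missing idea is that the four $\Delta$-terms do not merely have matching symbols under $(\zeta,s)\mapsto(-\zeta,-s)$ --- they actually cancel among themselves \emph{before} any such symmetry is used. Writing $w(\sigma)=e^{i\sigma\Delta}u(t_n)$ and expanding $\Delta\big(w(\zeta)^2\,\overline{w(2s-\zeta)}\big)$ via the Leibniz rule, the pure second-order pieces exactly cancel the three inner $i\Delta$'s, leaving
\[
\partial_\zeta N(\zeta,s)\;=\;2\,e^{-i\zeta\Delta}\Big(\big(\nabla w(\zeta)\big)^2\,\overline{w(2s-\zeta)}+2\,w(\zeta)\,\nabla w(\zeta)\cdot\nabla\overline{w(2s-\zeta)}\Big),
\]
which involves only \emph{first}-order derivatives (this identity is also used in \cite{alama2023symmetric}). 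Now the worst factor in $H^{\alpha+2\gamma}$ is $\nabla w$, costing $M_{\alpha+1+2\gamma}$, and in the difference terms the $(e^{it\Delta}-I)$ factor acting on $\nabla w$ costs $\|u(t_n)\|_{H^{\alpha+1+2\gamma}}$ rather than $\|u(t_n)\|_{H^{\alpha+2+2\gamma}}$. With this formula in hand, your decomposition strategy (outer filter difference plus telescoping differences on the arguments) goes through and yields exactly $C(M_{\alpha+1+2\gamma})\tau^\gamma$.
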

	
	\begin{proof}
		We start with $\mathcal{E}_1$. Recalling \cref{eq:N_def}, {and according to the direct calculation \cite{alama2023symmetric}, we have} 
		\begin{equation}\label{eq:parN}
			\partial_\zeta N(\zeta, s) = 2e^{- i \zeta \Delta} \left((\nabla w(\zeta))^2 \overline{w(2s - \zeta)} + 2 w(\zeta) \nabla w(\zeta) \cdot \nabla \overline{w(2s-\zeta)}\right), 
		\end{equation}
		where $w(\sigma) = e^{i\sigma\Delta} u(t_n)$. 
		From \cref{eq:E1_def}, by triangle inequality, the isometry property of $e^{it\Delta}$, we have
		\begin{align}\label{eq:E1_decomp}
			\| \mathcal{E}_1 \|_{H^\alpha} 
			&\leq 2\| (1-e^{2i\zeta \Delta}) ((\nabla w(\zeta))^2 \overline{w(2s - \zeta)} + 2 w(\zeta) \nabla w(\zeta) \cdot \nabla \overline{w(2s-\zeta)}) \|_{H^\alpha} \notag \\
			&\quad + 2\left \| (\nabla w(\zeta))^2 \overline{w(2s - \zeta)} - (\nabla w(-\zeta))^2 \overline{w(\zeta-2s)} \right \|_{H^\alpha} \notag \\
			&\quad + 4 \left \| w(\zeta) \nabla w(\zeta) \cdot \nabla \overline{w(2s-\zeta)} - w(-\zeta) \nabla w(-\zeta) \cdot \nabla \overline{w(\zeta - 2s)} \right \|_{H^\alpha} \notag \\
			&=:2\| (1-e^{2i\zeta\Delta})e_1 \|_{H^\alpha} + 2 \| e_2 \|_{H^\alpha} + 4 \| e_3 \|_{H^\alpha},
		\end{align}
        {with $e_1$, $e_2$ and $e_3$ given by
        \begin{align*}
             e_1 & = (\nabla w(\zeta))^2 \overline{w(2s - \zeta)} + 2 w(\zeta) \nabla w(\zeta) \cdot \nabla \overline{w(2s-\zeta)}, \\
             e_2 & = (\nabla w(\zeta))^2 \overline{w(2s - \zeta)} - (\nabla w(-\zeta))^2 \overline{w(\zeta-2s)}, \\ 
             e_3 & =  w(\zeta) \nabla w(\zeta) \cdot \nabla \overline{w(2s-\zeta)} - w(-\zeta) \nabla w(-\zeta) \cdot \nabla \overline{w(\zeta - 2s)}.
        \end{align*}
        By \cref{eq:exp_est} with $\alpha_1 =\alpha$, $\alpha_2 = 2\gamma$, we have 
        \begin{equation}
        \| \mathcal{E}_1 \|_{H^\alpha} \lesssim \tau^{\gamma} \| e_1 \|_{H^{\alpha+2\gamma}} + \| e_2 \|_{H^\alpha} + \| e_3 \|_{H^\alpha}.
        \end{equation}
        }       
		By the bilinear estimate \cref{eq:bi} and the isometry property of $e^{it\Delta}$, we get
		\begin{align}\label{eq:e1}
			\| e_1 \|_{H^{\alpha+2\gamma}} \lesssim \| u(t_n) \|_{H^{\alpha+1+2\gamma}}^2\| u(t_n) \|_{H^{\alpha + 2\gamma}} \lesssim M_{\alpha + 1+ 2\gamma}^3. 
		\end{align}
		To estimate $e_2$ and $e_3$, we need to estimate terms of the following two forms:
		\begin{equation}\label{eq:T1T2}
			T_1 = (w(t) - w(-t)) \nabla w_1 \cdot \nabla w_2, \quad T_2 = w_1 \nabla (w(t) - w(-t)) \cdot \nabla w_2, \quad t \geq 0,
		\end{equation}
		where $w(t) = e^{it\Delta} u(t_n)$ as defined before, and $w_1, w_2$ satisfy $\| w_j \|_{H^\delta} \leq M_\delta \ (j=1,2)$ for any $\delta \geq 0$. By \cref{eq:exp_est,eq:bi} again, we have
		\begin{align}
			&\| T_1 \|_{H^\alpha} \lesssim \| w(t) - w(-t) \|_{H^\alpha} \| \nabla w_1 \|_{H^\alpha} \| \nabla w_2 \|_{H^\alpha} \lesssim \tau^\gamma M_{\alpha+2\gamma} M_{1+\alpha}^2, \label{eq:T1}\\
			&\| T_2 \|_{H^\alpha} \lesssim \| w_1 \|_{H^\alpha} \| \nabla (w(t) - w(-t)) \|_{H^\alpha} \|  \nabla w_2 \|_{H^\alpha} \lesssim \tau^\gamma M_{\alpha+1 +2\gamma} M_\alpha M_{1+\alpha}. \label{eq:T2}
		\end{align}
		For $e_2$ and $e_3$ in \cref{eq:E1_decomp}, by \cref{eq:T1,eq:T2}, we have $\| e_2 \|_{H^\alpha} + \| e_3 \|_{H^\alpha} \lesssim \tau^\gamma M_{\alpha + 1 +2\gamma}^3$, which, together with \cref{eq:e1}, yields from \cref{eq:E1_decomp} that
		\begin{equation}\label{eq:E1_est}
			\| \mathcal{E}_1 \|_{H^\alpha} \leq C(M_{\alpha+1+2\gamma}) \tau^{\gamma}. 
		\end{equation} 
		Then we deal with $\mathcal{E}_2$. To simplify the notations, we define, for $-\tau \leq t \leq \tau$, 
		\begin{equation}\label{eq:notation_simp}
			u_n(t) = u(t_n+t), \quad  v_n(t) = e^{-it\Delta}u_n(t), \quad  w_n(t) = e^{-it\Delta} F(e^{it\Delta}v_n(t)).
		\end{equation}
		Taking \cref{eq:notation_simp} into \cref{eq:duhamel_modified}, we have\vspace{-0.3cm}
		\begin{equation}\label{eq:vn_int}
			v_n(t) = u(t_n) - i \int_0^t w_n(s) \rmd s, \quad -\tau \leq t \leq \tau, 
		\end{equation}\vspace{-0.6cm}\ \\
		which implies, by \cref{eq:bi}, 
		\begin{equation}\label{eq:vn_est}
			\| v_n(t) - v_n(0) \|_{H^\alpha} \leq \tau \sup_{ |s| \leq \tau} \| w_n(s) \|_{H^\alpha} \lesssim \tau M_\alpha^3, \quad - \tau \leq t \leq \tau.  	
		\end{equation}
		From \cref{eq:Gamma_def}, recalling \cref{eq:notation_simp}, we get
		\begin{equation}\label{eq:parGamma}
			\partial_\zeta \Gamma(\zeta, s) = \partial_\zeta F(e^{is\Delta} v_n(\zeta)) = dF(e^{is\Delta} v_n(\zeta))[-ie^{is\Delta} w_n(\zeta) ],  
		\end{equation}
		where $dF(\cdot)[\cdot]$ is the Gateaux derivative of $F$ given by
		\begin{equation}\label{eq:dF_def}
			dF(v)[w] := \lim_{\delta \rightarrow 0} \frac{F(v+\delta w) - F(v)}{\delta} = 2|v|^2w + v^2 \overline{w}. 
		\end{equation}
		Compared to \cref{eq:parN}, there is no gradient in \cref{eq:parGamma} and thus the additional regularity needed by $\mathcal{E}_2$ is lower than for $\mathcal{E}_1$. Recalling \cref{eq:dF_def}, using the bilinear estimate \cref{eq:bi}, we have, for $v_1, v_2, w_1, w_2 \in H^\alpha(\mathbb{T}^d)$, 
		\begin{align}\label{eq:diff_dF}
			\| dF(v_1)[w_1] - dF(v_2)[w_2] \|_{H^\alpha} 
			&\lesssim \| v_1 - v_2 \|_{H^\alpha}\| v_1 \|_{H^\alpha} \| w_1 \|_{H^\alpha} \notag \\
			&\ + \| v_2 \|_{H^\alpha} \| v_1 - v_2 \|_{H^\alpha} \| w_1 \|_{H^\alpha} + \| v_2 \|_{H^\alpha}^2 \| w_1 - w_2 \|_{H^\alpha}. 
		\end{align} 
		From {the definition of $\mathcal{E}_2$ \cref{eq:E2_def}, the bilinear estimate \cref{eq:bi}, the estimate of $(e^{it\Delta}-I)$ \cref{eq:exp_est} and the definition of the Gateaux derivative of $F$ \cref{eq:diff_dF}, we have,} recalling \cref{eq:parGamma,eq:notation_simp},
		\begin{align}\label{eq:E2_decomp}
			\| \mathcal{E}_2 \|_{H^\alpha} 
			&\leq \| (e^{-is\Delta} - e^{is\Delta}) \partial_\zeta \Gamma(\zeta, s) \|_{H^\alpha} +  \| \partial_\zeta \Gamma(\zeta, s) - \partial_\zeta \Gamma(-\zeta, -s) \|_{H^\alpha} \notag \\
			&\lesssim \tau^{\gamma} \| \partial_\zeta \Gamma(\zeta, s) \|_{H^{{\alpha+2\gamma}}} \notag \\
			&\quad + \| dF(e^{is\Delta} v_n(\zeta))[-ie^{is\Delta} w_n(\zeta) ] - dF(e^{-is\Delta} v_n(-\zeta))[-ie^{-is\Delta} w_n(-\zeta) ] \|_{H^\alpha} \notag \\
			&\lesssim \tau^{\gamma} M_{\alpha+2\gamma}^5  + M_\alpha^4 \| e^{is\Delta} v_n(\zeta) - e^{-is\Delta}v_n(-\zeta) \|_{H^\alpha} \notag \\
			&\quad + M_\alpha^2 \| e^{is\Delta} w_n(\zeta) - e^{-is\Delta}w_n(-\zeta) \|_{H^\alpha} \notag \\
			&=: \tau^{\gamma} M_{\alpha+2\gamma}^5 + M_\alpha^4 \| e_4 \|_{H^\alpha} + M_\alpha^2 \| e_5 \|_{H^\alpha},
		\end{align}
        {with $e_4$ and $e_5$ given by
        \begin{align*}
            e_4 & = e^{is\Delta} v_n(\zeta) - e^{-is\Delta}v_n(-\zeta),\\
            e_5 & = e^{is\Delta} w_n(\zeta) - e^{-is\Delta}w_n(-\zeta).
        \end{align*}
        }
		For $e_4$, by \cref{eq:vn_est,eq:exp_est}, we have
		\begin{align}\label{eq:e4_est}
			\| e_4 \|_{H^\alpha} 
			&\lesssim  \| (e^{is\Delta} - e^{is\Delta}) v_n(0) \|_{H^\alpha} + \| v_n(\zeta) - v_n(0) \|_{H^\alpha} + \| v_n(-\zeta) - v_n(0) \|_{H^\alpha} \notag \\
			&\lesssim \tau^{\gamma} M_{\alpha + 2\gamma} + \tau M_\alpha^3 \leq \tau^{\gamma} C(M_{\alpha + 2\gamma}). 
		\end{align}
		For $e_5$, by \cref{eq:vn_est,eq:exp_est}, and \cref{eq:e4_est} for $e_4$ with $s = \zeta$, we have
		\begin{align}\label{eq:e5_est}
			\| e_5 \|_{H^\alpha} 
			&\leq \| (e^{is\Delta} - e^{-is\Delta}) w_n(\zeta) \|_{H^\alpha} + \| w_n(\zeta) - w_n(-\zeta) \|_{H^\alpha} \notag \\
			&\lesssim \tau^{\gamma} \| w_n(\zeta) \|_{H^{\alpha+2\gamma}} + \| e^{-i\zeta \Delta} F(e^{i\zeta\Delta}v_n(\zeta)) - e^{i\zeta \Delta} F(e^{-i\zeta\Delta}v_n(-\zeta)) \|_{H^\alpha} \notag \\
			&\lesssim \tau^{\gamma} M_{\alpha+2\gamma}^3 + \tau^{\gamma} \| F(e^{i\zeta\Delta} v_n(\zeta)) \|_{H^{\alpha+2\gamma}} \notag \\
			&\quad + \| F(e^{i\zeta\Delta}v_n(\zeta)) - F(e^{-i\zeta\Delta}v_n(-\zeta)) \|_{H^\alpha} \notag \\
			&\lesssim \tau^{\gamma} M_{\alpha+2\gamma}^3 + M_\alpha^2\| e^{i\zeta\Delta}v_n(\zeta) - e^{-i\zeta\Delta} v_n(-\zeta) \|_{H^\alpha} \notag \\
			&\lesssim \tau^{\gamma} M_{\alpha+2\gamma}^3 + M_\alpha^2 \tau^\gamma C(M_{\alpha + 2\gamma}) \lesssim \tau^{\gamma} C(M_{\alpha + 2\gamma}). 
		\end{align}
		Plugging \cref{eq:e4_est,eq:e5_est} into \cref{eq:E2_decomp}, we obtain\vspace{-.1cm}
		\begin{equation}\label{eq:E2_est}
			\| \mathcal{E}_2 \|_{H^\alpha}  \lesssim C(M_{\alpha + 2\gamma}) \tau^{\gamma}. 
		\end{equation}
		{Combining} \cref{eq:E1_est,eq:E2_est}, we complete the proof. 
	\end{proof}

	
	
	In the following, we perform error estimates in low-order Sobolev spaces, i.e. $L^2$ and $H^1$ spaces, beyond the general framework of convergence analysis introduced in the previous subsection. As mentioned before, $L^2$- and $H^1$-norms are natural norms in terms of the mass and energy, and these also fit well with our low-regularity setting. To do so, let us define, 
	for $ 0\leq \gamma \leq 1$, the index $\sigma = \sigma(\gamma)>d/2$ and $\sigma_1 = \sigma_1(\gamma)>1+d/2$ as 
	\begin{equation}
		\sigma:= \left\{
		\begin{aligned}
			&2\gamma+1, && \gamma>d/4, \\
			&d/2+1+\vep, && \gamma = d/4, \\
			&1+d/4+\gamma, && \gamma<d/4,
		\end{aligned}
		\right. \quad 
		\sigma_1: =  \left\{
		\begin{aligned}
			&1+d/2+\vep, && \gamma \leq d/4-1/2, \\
			&2+2\gamma, && \gamma > d/4- 1/2,  
		\end{aligned}
		\right.
	\end{equation}
	where $\vep>0$ is not fixed and can be arbitrarily small. We shall adopt such convention for $\vep$ throughout the rest of this paper, which, in particular, applies to \cref{eq:bi3} below. 
	
	For the $L^2$-norm error bound, we have the following result. 
	\begin{theorem}\label{cor:globalerrorinL2}
		Let $0 \leq \gamma \leq 1$. Suppose $u^{n}$ is computed using \eqref{eq:sLRI1}. For all $0<\tau\leq \tau_0$ with $\tau_0>0$ depending on $M_{\sigma}$ and $T$, we have
		\begin{equation*}
			\|u(t_n) - u^n\|_{L^2} \leq C(M_\sigma)\tau^{1+\gamma},\quad 0 \leq n \leq T/\tau. 
		\end{equation*}
	\end{theorem}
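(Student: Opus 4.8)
The plan is to run the Lady Windermere's fan argument of \cref{thm:global_error_multistep_schemes} in $L^2$ rather than in $H^\alpha$ with $\alpha>d/2$. Three ingredients are needed: (a) a uniform $H^{\alpha_0}$-bound on the numerical solution for a fixed $\alpha_0\in(d/2,\sigma]$, in order to control $\|u^n\|_{L^\infty}$; (b) a weaker $L^2$-stability estimate of the form \eqref{eqn:weaker_stability_estimate}; and (c) an $L^2$ local error estimate of order $2+\gamma$ for $\bm{\Phi}_\tau$, i.e.\ the $L^2$-analogue of \cref{thm:sLRI_decomp}. Essentially all of the regularity budget $M_\sigma$, and the piecewise shape of $\sigma$, is spent on (c).

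For (a), since $u_0\in H^\sigma\hookrightarrow H^{\alpha_0}$ and $\Phi^{\text{LRI1}}_{\pm\tau}$ satisfies \cref{assumption:general_properties_higher_order_schemes} (i) in $H^{\alpha_0}$, the $H^{\alpha_0}$-convergence of \cref{thm:global_error_multistep_schemes} yields $\sup_n\|u^n\|_{H^{\alpha_0}}\le C(M_{\alpha_0+1},T)$ for $\tau$ small, hence $\sup_n\|u^n\|_{L^\infty}\le C=:M_T$. For (b), since $\widetilde{\Phi}^{\text{LRI1}}_{\pm\tau}(v)-\widetilde{\Phi}^{\text{LRI1}}_{\pm\tau}(w)=\mp i\mu\tau e^{\pm i\tau\Delta}\big(v^2\varphi_1(\mp 2i\tau\Delta)\overline{v}-w^2\varphi_1(\mp 2i\tau\Delta)\overline{w}\big)$ and both $e^{\pm i\tau\Delta}$ and $\varphi_1(\mp 2i\tau\Delta)$ are $L^2$-contractions (the latter because $|\varphi_1(ix)|\le1$ for $x\in\mathbb{R}$), the cubic Lipschitz bound $\|v^2\overline{v}-w^2\overline{w}\|_{L^2}\lesssim(\|v\|_{L^\infty}^2+\|w\|_{L^\infty}^2)\|v-w\|_{L^2}$ gives \eqref{eqn:weaker_stability_estimate} with $M(v,w)\lesssim\|v\|_{L^\infty}^2+\|w\|_{L^\infty}^2$, which is uniformly bounded by $M_T$ along the exact and numerical solutions by (a); transferring this to the matrix flow exactly as in the proof of \cref{prop:stability_multistep} gives $\|\bm{\Phi}_\tau(\mathbf{v})-\bm{\Phi}_\tau(\mathbf{w})\|_{L^2}\le(1+\tau C(M_T))\|\mathbf{v}-\mathbf{w}\|_{L^2}$. (Strictly, (a) costs $M_{\alpha_0+1}$, so this route produces a constant $C(M_{\max(\sigma,\,d/2+1+\vep)})$; taking $\alpha_0$ close to $d/2$ this is $C(M_\sigma)$ whenever $\sigma\ge d/2+1$, and for $\gamma<d/4$ one would want a sharper a priori argument to reach $M_\sigma$ — a point I would want to pin down.)

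For (c) I would mirror the proof of \cref{thm:sLRI_decomp}: write $\mathcal{L}^n=-i\mu e^{i\tau\Delta}(r_1^n+r_2^n)$ with $\|r_j^n\|_{L^2}\le\frac{\tau^2}{2}\sup_{0\le\zeta\le s\le\tau}\|\mathcal{E}_j\|_{L^2}$ for the cancellation terms $\mathcal{E}_1,\mathcal{E}_2$ of \eqref{eq:E1_def}, \eqref{eq:E2_def}, and prove $\|\mathcal{E}_1\|_{L^2}+\|\mathcal{E}_2\|_{L^2}\le C(M_\sigma)\tau^\gamma$. Since the bilinear estimate \eqref{eq:bi} is unavailable at $\alpha=0$, the key tool is a multilinear refinement of \eqref{eq:exp_est}: using $|\mathbf{k}|^{2\gamma}\lesssim|\mathbf{k}_1|^{2\gamma}+|\mathbf{k}_2|^{2\gamma}+|\mathbf{k}_3|^{2\gamma}$ on a cubic term one gets $\|(e^{it\Delta}-I)(f_1f_2f_3)\|_{L^2}\lesssim t^\gamma\sum_j\||D|^{2\gamma}f_j\|_{L^{p_j}}\prod_{i\ne j}\|f_i\|_{L^{p_i}}$ for any exponents with $\sum_j p_j^{-1}=\frac{1}{2}$, combined with the Sobolev embeddings $H^s\hookrightarrow L^p$ ($s-d/2\ge-d/p$) and $H^{d/2+\vep}\hookrightarrow L^\infty$. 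The terms of $\mathcal{E}_1$ contain a product of two gradients (cf.\ \eqref{eq:parN}), and the dominant contribution is estimated by placing the $t^\gamma$-smoothing on one gradient factor in the slot $L^{d/(d/4+\gamma)}$, the second gradient in $L^{d/(d/4-\gamma)}$, and the remaining factor in $L^\infty$; requiring all of these embeddings to hold for functions built from $u(t_n)\in H^\sigma$ is exactly what pins down $\sigma$, the threshold $\gamma=d/4$ coming from the borderline embedding $H^{d/4}\hookrightarrow L^4$ — so that for $\gamma>d/4$ one instead uses that $H^{2\gamma}$ is an algebra (needing $H^{1+2\gamma}$), and the endpoint $\gamma=d/4$ costs an $\vep$. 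The $\mathcal{E}_2$ terms carry no gradient, hence need only $M_{\max(2\gamma,\,d/2+\vep)}\le M_\sigma$ and can be treated as in \cref{lem:E1andE2} with $L^2$-product estimates replacing \eqref{eq:bi}. The first-step error $\|u(t_1)-\Phi^{\text{LRI1}}_\tau(u_0)\|_{L^2}$ is bounded by $\tau^2C(M_\sigma)\le\tau^{1+\gamma}C(M_\sigma)$ via the analogous one-sided decomposition over $[0,\tau]$ (no symmetry cancellation needed there). Combining (a)--(c) exactly as in \cref{thm:global_error_multistep_schemes} gives $\|\mathbf{e}^{n+1}\|_{L^2}\le(1+\tau C(M_T))\|\mathbf{e}^n\|_{L^2}+C(M_\sigma)\tau^{2+\gamma}$ with $\|\mathbf{e}^1\|_{L^2}\le C(M_\sigma)\tau^{1+\gamma}$, and discrete Gronwall yields the claim.

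The main obstacle I expect is the sharp form of (c): extracting the full $\tau^\gamma$ gain in $\mathcal{E}_1$ from only $M_\sigma$ bounded derivatives of $u$. In $L^2$ there is no algebra, so one must distribute the frequency weight $|\mathbf{k}|^{2\gamma}$ across the three factors of each trilinear term and optimise the Hölder exponents — the $d/4$-thresholds and the $\vep$-loss in the definition of $\sigma$ being precisely the outcome of this optimisation — and carrying these case distinctions cleanly through all summands of $\mathcal{E}_1$ (and the $e_2,e_3$-type differences of \cref{lem:E1andE2}) is the delicate part.
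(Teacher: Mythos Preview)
Your three-ingredient plan (a)–(c) matches the paper's structure, and your route to (c) via distributing the $|\mathbf{k}|^{2\gamma}$ weight and H\"older/Sobolev is essentially equivalent to the paper's use of the bilinear estimates \eqref{eq:bi1}–\eqref{eq:bi3} (indeed those estimates are proved exactly by the H\"older/embedding argument you describe, and the case split at $\gamma=d/4$ arises for the same reason you give). So ingredient (c) is fine.

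The genuine gap is ingredient (a), and it is precisely the one you flag but do not resolve. Invoking \cref{thm:global_error_multistep_schemes} in $H^{\alpha_0}$ with $\alpha_0>d/2$ costs $M_{\alpha_0+1}$; for $\gamma<d/4$ one has $\sigma=1+d/4+\gamma<1+d/2$, so no choice of $\alpha_0>d/2$ gives $\alpha_0+1\le\sigma$, and your constant genuinely depends on more regularity than the statement allows. The paper fixes this by a bootstrap through interpolation: first prove (c), i.e.\ $\|\mathcal{L}^n\|_{L^2}\le C(M_\sigma)\tau^{2+\gamma}$; combine it with the crude bound $\|\mathcal{L}^n\|_{H^\sigma}\le C(M_\sigma)\tau$ (which needs only $\sigma>d/2$ and the algebra property \eqref{eq:bi}) to interpolate
\[
\|\mathcal{L}^n\|_{H^{\widetilde{\alpha}}}\lesssim C(M_\sigma)\,\tau^{1+(1+\gamma)(1-\widetilde{\alpha}/\sigma)},\qquad 0\le\widetilde{\alpha}\le\sigma.
\]
Picking a fixed $\widetilde{\alpha}\in(d/2,\sigma)$ this exponent exceeds $1$, so running the Lady Windermere's fan of \cref{thm:global_error_multistep_schemes} in $H^{\widetilde{\alpha}}$ with \emph{this} local error (rather than Assumption~\ref{assumption:general_properties_higher_order_schemes} (ii)) yields $\|u^n\|_{H^{\widetilde{\alpha}}}\le C(M_\sigma)$ without ever touching $M_{\widetilde{\alpha}+1}$. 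With that a priori bound in hand, your steps (b) and the $L^2$ Gronwall go through exactly as you wrote. In short: prove (c) \emph{first}, then use it (via interpolation) to get (a) at cost only $M_\sigma$.
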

	
	For the $H^1$-norm error bound in 2D and 3D (the 1D case is covered by \cref{thm:improvedhigh}), we have the following result. 
	\begin{theorem}\label{thm:sLRI1_H1}
		Let $0 \leq \gamma \leq 1$ and $d = 2, 3$. Suppose $u^{n}$ is computed using \eqref{eq:sLRI1}. For all $0<\tau\leq \tau_0$ with $\tau_0>0$ depending on $M_{\sigma_1}$ and $T$, we have
		\begin{equation*}
			\|u(t_n) - u^n\|_{H^1} \leq C(M_{\sigma_1})\tau^{1+\gamma}, \quad 0 \leq n \leq T/\tau. 
		\end{equation*}
	\end{theorem}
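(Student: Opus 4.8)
The plan is to run the same Lady Windermere's fan argument as in the proof of Theorem~\ref{thm:global_error_multistep_schemes}, but carried out entirely in $H^1\times H^1$ rather than $H^\alpha\times H^\alpha$ with $\alpha>d/2$. This requires three ingredients, each below the supercritical threshold: (a) a local error estimate for $\mathcal{L}^n=\mathcal{R}^n_\tau-e^{2i\tau\Delta}\mathcal{R}^n_{-\tau}$ in $H^1$ of order $\tau^{2+\gamma}$, costing $\sigma_1$ derivatives of the exact solution; (b) an $H^1$ estimate of the first-step error $\mathbf e^1=\mathcal R^0_\tau$, which only needs the (non-symmetrised) local error of LRI1 and should be $O(\tau^2)$ in $H^1$, hence harmless; and (c) a stability estimate for $\bm\Phi_\tau$ in $H^1$ along the exact-and-numerical solution pair, of the weak form \eqref{eqn:weaker_stability_estimate}, i.e. $\|\widetilde\Phi_{\pm\tau}(u(t_n))-\widetilde\Phi_{\pm\tau}(u^n)\|_{H^1}\le \tau M_T\|u(t_n)-u^n\|_{H^1}$ with $M_T$ bounded provided $\|u^n\|_{H^1}$ stays bounded. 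Given these, the discrete Gronwall inequality closes the induction exactly as in Theorem~\ref{thm:global_error_multistep_schemes}, with the boundedness of $\|u^n\|_{H^1}$ propagated by the standard induction (using that $d=2,3$ so $H^1$ embeds into $L^4$, which controls the cubic nonlinearity, and using mass/energy-type a priori bounds only to the extent of $\|u(t_n)\|_{H^1}\le M_1$).

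For ingredient (a) I would revisit the decomposition $\mathcal L^n=-i\mu e^{i\tau\Delta}(r_1^n+r_2^n)$ from the proof of Proposition~\ref{thm:sLRI_decomp}, with $r_1^n$ and $r_2^n$ and the cancellation terms $\mathcal E_1,\mathcal E_2$ as in \eqref{eq:E1_def}--\eqref{eq:E2_def}, and re-estimate $\mathcal E_1$ and $\mathcal E_2$ in $H^1$ rather than $H^\alpha$. The key point is that $\mathcal E_1$ in \eqref{eq:parN} carries two derivatives ($(\nabla w)^2\overline w$ and $w\,\nabla w\cdot\nabla\overline w$), so the bilinear estimate $\|vw\|_{H^1}\lesssim \|v\|_{H^1}\|w\|_{H^{1+d/2+\vep}}$ (or the symmetric Kato--Ponce-type inequality) must be used to keep one factor in $H^1$ and push the remaining $d/2+\vep$ regularity plus the two gradients plus the $2\gamma$ gain from $\eqref{eq:exp_est}$ onto the other factors; this is precisely what produces the two cases in the definition of $\sigma_1$: when $\gamma>d/4-1/2$ the $\tau^\gamma$ gain is bought at cost $2\gamma$ extra derivatives giving $2+2\gamma$, and when $\gamma\le d/4-1/2$ the $d/2+\vep$ from the bilinear estimate dominates giving $1+d/2+\vep$. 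The term $\mathcal E_2$ in \eqref{eq:parGamma}, having no gradients, is strictly cheaper and does not affect $\sigma_1$. I would record the needed inhomogeneous bilinear estimates (the analogue of \eqref{eq:bi} valid at $H^1$ level in 2D/3D, presumably labelled \eqref{eq:bi3} as hinted) and the $L^4$ control of the cubic term, then reassemble to get $\|\mathcal L^n\|_{H^1}\le C(M_{\sigma_1})\tau^{2+\gamma}$.

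I expect the main obstacle to be ingredient (a) — specifically, tracking the derivative budget in the bilinear estimates for $\mathcal E_1$ so that exactly the claimed $\sigma_1$ derivatives suffice, and correctly obtaining the case split. Unlike the $H^\alpha$ situation with $\alpha>d/2$, the algebra property fails at $H^1$, so every product must be split asymmetrically and one must be careful that the high-regularity factor is always the one multiplied by the exact solution (never a gradient of it beyond order one), so that no more than $\sigma_1$ derivatives ever land on $u(t_n)$. A secondary, more routine, obstacle is verifying the $H^1$ stability estimate (c): since $\widetilde\Phi_\tau$ is quadratic-times-linear in its argument, its Lipschitz constant on an $H^1$-ball involves $\|v\|_{H^1}\|w\|_{H^1}$-type terms controlled by $L^4$ in 2D/3D, so one gets \eqref{eqn:weaker_stability_estimate} with $M_T$ depending only on the uniform $H^1$ bound of the exact solution and the (inductively controlled) numerical solution — this mirrors the remark following Theorem~\ref{thm:global_error_multistep_schemes} and the $L^2$ argument in Theorem~\ref{cor:globalerrorinL2}, so I would present it briefly and refer back. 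Finally I would assemble: $\|\mathbf e^{n+1}\|_{H^1}\le(1+\tau M_T)\|\mathbf e^n\|_{H^1}+C(M_{\sigma_1})\tau^{2+\gamma}$ together with $\|\mathbf e^1\|_{H^1}\le C\tau^2$, and conclude via discrete Gronwall that $\|\mathbf e^n\|_{H^1}\le C(M_{\sigma_1})\tau^{1+\gamma}$, which dominates $\|u(t_n)-u^n\|_{H^1}$.
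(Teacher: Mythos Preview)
Your local-error plan (ingredient (a)) matches the paper's proof: revisit the decomposition $\mathcal L^n=-i\mu e^{i\tau\Delta}(r_1^n+r_2^n)$, re-estimate $\mathcal E_1,\mathcal E_2$ in $H^1$ via the bilinear estimate $\|vw\|_{H^1}\lesssim\|v\|_{H^{d/2+\vep}}\|w\|_{H^1}$ (this is \eqref{eq:bi3} with $\gamma=1/2$), and observe that the worst term is $T_2$, costing $\max(2+2\gamma,\,1+d/2+\vep)=\sigma_1$ derivatives. Your reading of the case split is correct.

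The gap is in ingredient (c). You claim the $H^1$-Lipschitz constant of $\widetilde\Phi_{\pm\tau}$ can be bounded by something depending only on $\|u(t_n)\|_{H^1}$ and $\|u^n\|_{H^1}$, invoking $H^1\hookrightarrow L^4$. That embedding controls the $L^2$-norm of the cubic, but not its $H^1$-norm: for $d=2,3$ the space $H^1$ is not an algebra, and indeed
\[
\|(v-w)v\bar v\|_{H^1}\lesssim \|v-w\|_{H^1}\|v\|_{H^{d/2+\vep}}^2,
\]
with no estimate available that replaces $H^{d/2+\vep}$ by $H^1$ (the gradient term $\|v^2\nabla(v-w)\|_{L^2}$ forces $v\in L^\infty$ or equivalent). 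So the induction you describe, propagating only an $H^1$ bound on $u^n$, cannot close the stability step.

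The paper fixes this exactly as in the proof of Theorem~\ref{cor:globalerrorinL2}, to which it refers: first interpolate the local error between $\|\mathcal L^n\|_{H^1}\le C\tau^{2+\gamma}$ and the crude $\|\mathcal L^n\|_{H^{\sigma_1}}\le C\tau$ to obtain a positive-order local error in some $H^{\widetilde\alpha}$ with $d/2<\widetilde\alpha<\sigma_1$; then run Theorem~\ref{thm:global_error_multistep_schemes} in $H^{\widetilde\alpha}$ (where the algebra property holds) to get the uniform bound $\|u^n\|_{H^{\widetilde\alpha}}\le C(M_{\sigma_1})$; and only then use this bound as the Lipschitz constant in the $H^1$ Lady Windermere's fan. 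You gesture toward this bootstrap at the end of your paragraph on (c), but your stated plan --- $M_T$ depending only on the $H^1$ bound --- is not what actually works.
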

	

{
\begin{remark}
    Theorems~\ref{cor:globalerrorinL2} \& \ref{thm:sLRI1_H1} imply that sLRI1 converges at second order in $H^{s}(\mathbb{T}^d)$ so long as the exact NLSE solution is bounded in $H^{s+3}(\mathbb{T}^d)$ for $s=0,1$. This is in line with the regularity requirements obtained for the symmetric implicit low-regularity integrator introduced by \cite{alama2023symmetric} and is still a lower regularity requirement than what classical symmetric methods would require for the same task - for example the Strang splitting (which is symmetric and explicit) converges at second order only for solutions that are bounded in $H^{s+4}(\mathbb{T}^d)$ (cf. \cite{lubich2008splitting}).
\end{remark}
}
	
	The estimate of local truncation error in $L^2$- and $H^1$-norms proceeds similarly to \cite{alama2023symmetric}, where the following bilinear estimates are introduced:
	\begin{align}
		&\| vw \|_{L^2} \lesssim \| v \|_{H^{\frac{d}{4} + \gamma}} \| w \|_{H^{\frac{d}{4} - \gamma}}, \qquad 0 \leq \gamma < d/4, \label{eq:bi1} \\ 
		&\| vw \|_{H^{2\gamma}} \lesssim \| v \|_{H^{\frac{d}{4} + \gamma}} \| w \|_{H^{\frac{d}{4} + \gamma}}, \qquad 0 \leq \gamma < d/4,  \label{eq:bi2}\\ 
		&\| vw \|_{H^{2\gamma}} \lesssim \| v \|_{H^{\frac{d}{2}+\vep}} \| w \|_{H^{2\gamma}}, \qquad 0 \leq \gamma \leq d/4. \label{eq:bi3}
	\end{align}
	
	We start with the $L^2$-norm estimate of the local truncation error $\mathcal{L}^n$ in \cref{eq:LTEsLRI1}. 
	\begin{proposition}\label{thm:LTE_L2}
		Let $0 \leq \gamma \leq 1$. We have
		\begin{equation*}
			\| \mathcal{L}^n \|_{L^2} \leq \tau^{2+\gamma} C(M_\sigma), \quad 1 \leq n \leq T/\tau-1. 
		\end{equation*}
	\end{proposition}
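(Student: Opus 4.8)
The plan is to follow the decomposition already set up in the proof of Proposition~\ref{thm:sLRI_decomp}: we have $\mathcal{L}^n = -i\mu e^{i\tau\Delta}(r_1^n+r_2^n)$ with $r_1^n$ and $r_2^n$ as in \cref{eq:r1,eq:r2}, and each of these is a double integral $\int_0^\tau\int_0^s(\cdots)\rmd\zeta\rmd s$ of the cancellation terms $\mathcal{E}_1$ and $\mathcal{E}_2$ from \cref{eq:E1_def,eq:E2_def}. Since $e^{i\tau\Delta}$ is an $L^2$-isometry, it suffices to bound $\|\mathcal{E}_1\|_{L^2}$ and $\|\mathcal{E}_2\|_{L^2}$ by $\tau^\gamma C(M_\sigma)$ and then integrate, picking up the extra $\tau^2$ from the two integrations. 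So the heart of the matter is an $L^2$-analogue of \cref{lem:E1andE2}, and the only real difference from that lemma is that we are no longer allowed to use the algebra bilinear estimate \cref{eq:bi} (valid only for $\alpha>d/2$); instead we must carefully assign Sobolev indices using \cref{eq:bi1,eq:bi2,eq:bi3}, which is exactly why the index $\sigma=\sigma(\gamma)$ is defined by cases around the threshold $\gamma=d/4$.

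Concretely, first I would revisit $\mathcal{E}_1$ via the expression \cref{eq:parN} for $\partial_\zeta N$. The term $\|(1-e^{2i\zeta\Delta})e_1\|_{L^2}$ is handled by \cref{eq:exp_est} with $\alpha_1=0$, $\alpha_2=2\gamma$ (legitimate only for $\gamma\le1$), giving $\tau^\gamma\|e_1\|_{H^{2\gamma}}$; then $e_1$ is a trilinear expression in $w(\cdot)=e^{i\cdot\Delta}u(t_n)$ with two gradients, so I estimate $\|e_1\|_{H^{2\gamma}}$ by putting the ``low-regularity'' factor in $H^{2\gamma}$ via \cref{eq:bi3} (needing $\gamma\le d/4$) or, when $\gamma>d/4$, directly by the algebra estimate in $H^{2\gamma}$ since then $2\gamma>d/2$; counting the two gradients this costs $H^{2\gamma+1}$ on two of the factors, which is where the ``$2\gamma+1$'' branch of $\sigma$ comes from. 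The difference terms $e_2,e_3$ reduce, as in \cref{eq:T1T2}, to $T_1,T_2$ involving $w(t)-w(-t)$; here \cref{eq:exp_est} again yields the gain $\tau^\gamma$ on the factor $w(t)-w(-t)$ (at the cost of $H^{2\gamma}$ resp. $H^{1+2\gamma}$), while the remaining two factors are distributed using \cref{eq:bi1,eq:bi2} in the regime $\gamma<d/4$ (this is where the $1+d/4+\gamma$ and $d/2+1+\vep$ branches appear), and using the plain algebra estimate when $\gamma\ge d/4$.

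For $\mathcal{E}_2$ I would mirror the $H^\alpha$ argument but with the same index bookkeeping: using \cref{eq:parGamma,eq:dF_def}, $\partial_\zeta\Gamma$ is trilinear in $v_n(\zeta),w_n(\zeta)$ with \emph{no} derivatives, so the regularity demand is strictly milder than for $\mathcal{E}_1$ and the bound $\tau^\gamma C(M_\sigma)$ follows a fortiori — in particular the $\|(e^{-is\Delta}-e^{is\Delta})\partial_\zeta\Gamma\|_{L^2}$ piece costs at most $\tau^\gamma\|\partial_\zeta\Gamma\|_{H^{2\gamma}}$ and the difference pieces $e_4,e_5$ are controlled exactly as in \cref{eq:e4_est,eq:e5_est} but with the algebra estimate replaced by \cref{eq:bi3} where $2\gamma\le d/2$. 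I would then state a lemma giving $\|\mathcal{E}_1\|_{L^2}+\|\mathcal{E}_2\|_{L^2}\le\tau^\gamma C(M_\sigma)$, from which $\|r_1^n\|_{L^2},\|r_2^n\|_{L^2}\le\tfrac{\tau^2}{2}\cdot\tau^\gamma C(M_\sigma)$ and hence $\|\mathcal{L}^n\|_{L^2}\le\tau^{2+\gamma}C(M_\sigma)$.

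The main obstacle I anticipate is not any single estimate but getting the index allocation consistent across all three regimes $\gamma<d/4$, $\gamma=d/4$, $\gamma>d/4$ simultaneously for every trilinear term, since each of \cref{eq:bi1,eq:bi2,eq:bi3} carries its own constraint on $\gamma$ and the ``weak'' slot must always be the one that already received the $\tau^\gamma$ gain from \cref{eq:exp_est}. In the borderline case $\gamma=d/4$ the endpoint Sobolev embedding fails, which forces the $+\vep$ in the definition of $\sigma$; making sure that the $\vep$ is only ever needed in that single case, and that all the constants remain of the advertised form $C(M_\sigma)$ with the correct (smallest possible) $\sigma$, is the delicate accounting that the proof must carry out — but no genuinely new analytic idea beyond \cref{lem:E1andE2} is required.
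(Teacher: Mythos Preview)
Your proposal is correct and follows essentially the same approach as the paper: reduce to $L^2$-bounds on $\mathcal{E}_1,\mathcal{E}_2$ via the decomposition of \cref{thm:sLRI_decomp}, then replace the algebra estimate \cref{eq:bi} by the refined bilinear estimates \cref{eq:bi1,eq:bi2,eq:bi3} with the case split around $\gamma=d/4$. The only minor difference is that for the $e_4,e_5$ pieces the paper uses the embedding $H^{d/3}\hookrightarrow L^6$ to control $\|w_n(s)\|_{L^2}$ (yielding $C(M_{d/3})$ rather than $C(M_{d/2+\vep})$), but since both indices are below $\sigma$ this does not affect the final statement.
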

	
	\begin{proof}
		Following the proof of \cref{thm:sLRI_decomp}, it suffices to obtain $L^2$-norm estimates of $\mathcal{E}_1$ in \cref{eq:E1_def} and $\mathcal{E}_2$ in \cref{eq:E2_def}. We start with $\mathcal{E}_1$. By noting that \cref{eq:E1_decomp} is still valid when $\alpha = 0$, it reduces to the estimates of $e_j \ (1 \leq j \leq 3)$ in $H^{2\gamma}$-norm or $L^2$-norm. For $e_1$, we have
		\begin{align}\label{eq:e1_L2}
			\| e_1 \|_{H^{2\gamma}} \lesssim M_{\sigma}^3, 
		\end{align}
		where the estimate for $\gamma \geq d/4$ follows directly from \cref{eq:bi} (and $\| e_1 \|_{H^{2\gamma}} \lesssim \|e_1 \|_{H^{d/2+\vep}}$ when $\gamma = d/4$), and the estimate for $\gamma < d/4$ follows from \cref{eq:bi2,eq:bi3} as
		\begin{align*}
			\| v_1 \nabla v_2 \cdot \nabla v_3 \|_{H^{2\gamma}} 
			&\lesssim \| v_1 \|_{H^{\frac{d}{2}+\vep}} \| \nabla v_2 \cdot \nabla v_3 \|_{H^{2\gamma}} 
			\lesssim \| v_1 \|_{H^{\frac{d}{2}+\vep}} \| \nabla v_2 \|_{H^{\frac{d}{4} + \gamma}} \| \nabla v_3 \|_{H^{\frac{d}{4} + \gamma}} \\
			&\lesssim \| v_1 \|_{H^{\frac{d}{2}+\vep}} \| v_2 \|_{H^{\frac{d}{4} + \gamma + 1}} \| v_3 \|_{H^{\frac{d}{4} + \gamma+1}}. 
		\end{align*} 
		To estimate $e_2$ and $e_3$ in \cref{eq:E1_decomp}, we need to estimate terms of the form \cref{eq:T1T2}, which can be estimated in the same way as (4.16) and (4.17) in \cite{alama2023symmetric} (see appendix): 
		\begin{equation}\label{eq:T1T2_est}
			\| T_1 \|_{L^2} + \| T_2 \|_{L^2} \leq \tau^\gamma C(M_\sigma). 
		\end{equation}
		By \cref{eq:T1T2_est}, we have $\| e_2 \|_{L^2} + \| e_3 \|_{L^2} \leq \tau^\gamma C(M_\sigma)$, which implies, together with \cref{eq:e1_L2}, \begin{equation}\label{eq:E1_est_L2}
	\| \mathcal{E}_1 \|_{L^2} \leq \tau^\gamma C(M_\sigma). 
	\end{equation}	 
	Then we estimate $\mathcal{E}_2$ \cref{eq:E2_def}. By \cref{eq:exp_est}, 
		\begin{align}\label{eq:E2_est_L2}
			\| \mathcal{E}_2 \|_{L^2} 
			&\leq \| (e^{-is\Delta} - e^{is\Delta}) \partial_\zeta \Gamma(\zeta, s) \|_{L^2} + \| \partial_\zeta \Gamma(\zeta, s) - \partial_\zeta \Gamma(-\zeta, -s) \|_{L^2} \notag \\
			&\lesssim \tau^\gamma \| \partial_\zeta \Gamma(\zeta, s) \|_{H^{2\gamma}} + \| \partial_\zeta \Gamma(\zeta, s) - \partial_\zeta \Gamma(-\zeta, -s) \|_{L^2}. 
		\end{align}
		From {the expression of $\partial_\zeta \Gamma(\zeta, s)$ in terms of the Gateaux derivative of $F$} \cref{eq:parGamma}, {recalling the notation \cref{eq:notation_simp} and using the bilinear estimate \cref{eq:bi}}, we have
		\begin{equation}\label{eq:E2_est_L2_1}
			\| \partial_\zeta \Gamma(\zeta, s) \|_{H^{2\gamma}} \lesssim \| \partial_\zeta \Gamma(\zeta, s) \|_{H^{\max\{2\gamma, \frac{d}{2}+\vep\}}} \lesssim M^3_{\max\{2\gamma, \frac{d}{2}+\vep\}}, 
		\end{equation}
		where, according to our definition of $\vep$, $\max\{2\gamma, \frac{d}{2}+\vep\} = 2\gamma$ only if $\gamma > d/4$. Recalling \cref{eq:parGamma}, using \cref{eq:bi3} with $\gamma=0$, we have
		\begin{align}\label{eq:E2_est_L2_2}
			\| \partial_\zeta \Gamma(\zeta, s) - \partial_\zeta \Gamma(-\zeta, -s) \|_{L^2} 
			&\lesssim C(M_{\frac{d}{2}+\vep})\| e^{is\Delta} v_n(\zeta)  - e^{-is\Delta} v_n(-\zeta) \|_{L^2} \notag \\
			&\quad + C(M_{\frac{d}{2}+\vep}) \| e^{is\Delta} w_n(\zeta) - e^{-is\Delta} w_n(-\zeta) \|_{L^2}. 
		\end{align}
		From \cref{eq:vn_int}, using $H^\frac{d}{3} \hookrightarrow L^6$, we have
		\begin{align}\label{eq:vn_est_L2}
			\| v_n(t) - v_n(0) \|_{L^2} 
			&\leq \tau \sup_{|s|\leq\tau} \| w_n(s) \|_{L^2} \lesssim \tau \sup_{|s|\leq\tau} \| e^{is\Delta} v_n(s) \|_{L^6}^3 \notag \\
			& \lesssim \tau \sup_{|s|\leq\tau} \| e^{is\Delta} v_n(s) \|_{H^\frac{d}{3}}^3 \leq \tau C(M_{\frac{d}{3}}). 
		\end{align}
		By \cref{eq:vn_est_L2,eq:exp_est}, we have
		\begin{align}\label{eq:T4}
			&\| e^{is\Delta} v_n(\zeta)  - e^{-is\Delta} v_n(-\zeta) \|_{L^2} \notag \\
			&\leq \| (e^{is\Delta} - e^{-is\Delta}) v_n(0) \|_{L^2} + \| v_n(\zeta)  - v_n(0) \|_{L^2} + \| v_n(-\zeta) - v_n(0) \|_{L^2} \notag \\
			&\lesssim \tau^\gamma \| v_n(0) \|_{H^{2\gamma}} + \tau C(M_\frac{d}{3}) \leq \tau^\gamma M_{2\gamma} + \tau C(M_\frac{d}{3}). 
		\end{align}
		Using {the bilinear estimate \cref{eq:bi}, the bound on $(e^{it\Delta}-I)$ \cref{eq:exp_est},  the refined bilinear estimate \cref{eq:bi3} with $\gamma=0$,} and \cref{eq:T4} with $s = \zeta$, we have
		\begin{align}\label{eq:T5}
			&\| e^{is\Delta} w_n(\zeta)  - e^{-is\Delta} w_n(-\zeta) \|_{L^2} \notag \\
			&\leq \| (e^{is\Delta} -  e^{-is\Delta}) w_n(\zeta) \|_{L^2} + \| w_n(\zeta) - w_n(-\zeta) \|_{L^2} \notag \\
			&\lesssim \tau^\gamma \| w_n(\zeta) \|_{H^{2\gamma}} + \| (e^{-i\zeta\Delta} -  e^{i\zeta\Delta}) F(e^{i\zeta\Delta} v_n(\zeta)) \|_{L^2} \notag \\
			&\quad + \| F(e^{i\zeta\Delta} v_n(\zeta)) - F(e^{-i\zeta\Delta} v_n(-\zeta)) \|_{L^2} \notag \\
			&\lesssim \tau^\gamma \| F(e^{i\zeta\Delta}v_n(\zeta)) \|_{H^{2\gamma}} \notag \\
			&\quad + \| e^{i\zeta\Delta} v_n(\zeta) - e^{-i\zeta\Delta} v_n(-\zeta) \|_{L^2} (\| v_n(\zeta) \|_{H^{\frac{d}{2}+\vep}}^2+ \| v_n(-\zeta) \|_{H^{\frac{d}{2}+\vep}}^2) \notag \\
			&\lesssim \tau^\gamma M_{\max\{2\gamma, \frac{d}{2}+\vep\}}^3 + ( \tau^\gamma M_{2\gamma} + \tau C(M_\frac{d}{3})) M_{\frac{d}{2}+\vep}^2 \lesssim \tau^\gamma C(M_{\max\{2\gamma, \frac{d}{2}+\vep\}}). 
		\end{align}
		Plugging \cref{eq:T4,eq:T5} into \cref{eq:E2_est_L2_2}, together with \cref{eq:E2_est_L2_1}, we get from \cref{eq:E2_est_L2} that
		$\| \mathcal{E}_2 \|_{L^2} \leq \tau^{\gamma} C(M_\sigma)$. 
		The proof is thus completed by combining \cref{eq:E1_est_L2,eq:E2_est_L2} and the proof of \cref{thm:sLRI_decomp}. 
	\end{proof}
	
	\begin{proof}[Proof of \cref{cor:globalerrorinL2}]
		Let $0 \leq \gamma \leq 1$. 
		Recalling \cref{eq:LTEsLRI1}, by \cref{eq:bi}, we have $\| \mathcal{L}^n \|_{H^\sigma} \lesssim C(M_{\sigma}) \tau.$ By interpolation, we have
		\begin{equation}
			\| \mathcal{L}^n \|_{H^{\widetilde{\alpha}}} \lesssim \| \mathcal{L}^n \|_{L^2}^{1-\frac{\widetilde{\alpha}}{\sigma}} \| \mathcal{L}^n \|_{H^\sigma}^{\frac{\widetilde{\alpha}}{\sigma}} \lesssim \tau^{1+(1+\gamma)(1-\widetilde{\alpha}/\sigma)}, \quad 0 \leq \widetilde{\alpha} \leq \sigma, 
		\end{equation}
		which yields\vspace{-0.2cm}
		\begin{equation}\label{eq:LTE_low}
			\| \mathcal{L}^n \|_{H^{\widetilde{\alpha}}} \leq C(M_\sigma)\tau^{1+(1+\gamma)(1-\widetilde\alpha/\sigma)}, \quad 0 \leq \widetilde{\alpha} \leq \sigma. 
		\end{equation}
		Choose a fixed $\widetilde \alpha$ such that $d/2<\widetilde\alpha<\sigma$. Following the proof of \cref{thm:global_error_multistep_schemes} with \cref{eq:LTE_low,prop:stability_multistep}, we get\vspace{-0.2cm}
		\begin{equation}\label{eq:uniformbound}
			\| u^n \|_{H^{\widetilde\alpha}} \leq C(M_{\sigma}), \quad 0 \leq n \leq T/\tau. 
		\end{equation}
		Noting \cref{eq:uniformbound}, using \cref{thm:LTE_L2} and the stability estimate for $\widetilde \Phi_{\pm \tau}$ in \cref{LRI1pm} as
		\begin{equation}
			\| \widetilde \Phi_{\pm \tau}(v) - \widetilde \Phi_{\pm \tau}(w) \|_{L^2} \leq C(\| v \|_{H^{\widetilde \alpha}}, \| w \|_{H^{\widetilde \alpha}})\tau \| v-w \|_{L^2}, 
		\end{equation}
		we complete the proof. 
	\end{proof}
	
	\begin{proof}[Proof of \cref{thm:sLRI1_H1}]
		Following the proof of \cref{cor:globalerrorinL2}, the global convergence in $H^1$-norm reduces to the estimate of $\mathcal{L}^n$ defined in \cref{eq:LTEsLRI1}, which further reduces to the estimates of $\mathcal{E}_1$ \cref{eq:E1_def} and $\mathcal{E}_2$ \cref{eq:E2_def} by \cref{thm:sLRI_decomp}. For $\mathcal{E}_1$, noting \cref{eq:E1_decomp} holds for $\alpha = 1$, it suffices to estimate $\| e_1 \|_{H^{1+2\gamma}}$, $\| e_2 \|_{H^1}$, and $\| e_3 \|_{H^1}$. By \cref{eq:bi}, noting $\sigma_1-1>d/2$ and $ \sigma_1 - 1 \geq1+2\gamma $, we have
		\begin{align}\label{eq:e1_H1}
			\| e_1 \|_{H^{1+2\gamma}} \lesssim \| e_1 \|_{H^{\sigma_1-1}} \leq C(M_{\sigma_1}). 
		\end{align}
		The estimates of $e_2$ and $e_3$ again reduce to the estimates of $T_2$ and $T_3$ in \cref{eq:T1T2}. By \cref{eq:bi3} with $\gamma = 1/2$,  
		\begin{align*}
			&\| T_1 \|_{H^1} \lesssim \| w(t) - w(-t) \|_{H^1} \| \nabla w_1 \|_{H^{\frac{d}{2}+\vep}} \| \nabla w_2 \|_{H^{\frac{d}{2}+\vep}} \lesssim \tau^\gamma M_{1+2\gamma} M_{\frac{d}{2}+\vep+1}^2, \quad \\
			&\| T_2 \|_{H^1} \lesssim \| w_1 \|_{H^{\frac{d}{2}+\vep}} \| \nabla (w(t) - w(-t)) \|_{H^1} \| \nabla w_2 \|_{H^{\frac{d}{2}+\vep}} \lesssim \tau^\gamma M_{\frac{d}{2}+\vep}M_{2+2\gamma}M_{\frac{d}{2}+\vep+1}, 
		\end{align*}
		which implies $\| e_2 \|_{H^1} + \| e_3 \|_{H^1} \lesssim \tau^\gamma C(M_{\sigma_1})$ and further implies $\| \mathcal{E}_1 \|_{H^1} \leq \tau^\gamma C(M_{\sigma_1})$. 
		To estimate $\mathcal{E}_2$ \cref{eq:E2_def}, similar to \cref{eq:E2_est_L2}, we have
		\begin{align*}
			\| \mathcal{E}_2 \|_{H^1} 
			&\lesssim \tau^\gamma \| \partial_\zeta \Gamma(\zeta, s) \|_{H^{1+2\gamma}} + \| \partial_\zeta \Gamma(\zeta, s) - \partial_\zeta \Gamma(-\zeta, -s) \|_{H^1} \notag \\
			&\lesssim \tau^\gamma \| \partial_\zeta \Gamma(\zeta, s) \|_{H^{\sigma_1- 1}} + \| \partial_\zeta \Gamma(\zeta, s) - \partial_\zeta \Gamma(-\zeta, -s) \|_{H^{\frac{d}{2}+\vep}}, 
		\end{align*}
		which implies, by \cref{eq:bi} and following a similar procedure in establishing \cref{eq:E2_est}, 
		\begin{equation*}
			\| \mathcal{E}_2 \|_{H^1} \lesssim \tau^\gamma C(M_{\sigma_1-1}) + \tau^\gamma C(M_{\frac{d}{2}+\vep+2\gamma}) \leq \tau^\gamma C(M_{\sigma_1}). 
		\end{equation*}
		The proof is thus completed. 
	\end{proof}
	
	\subsection{{Convergence analysis for sLRI2}}\label{sec:application_to_sLRI2}
	Again we focus on the non-symmetric integrator \eqref{eqn:bruned_schratz_lri} which served as the basis for the construction of sLRI2. Note that in this case the method is of even order ($p=2$ subject to sufficient regularity in the solution) so we cannot expect any improvement in the convergence order by symmetrisation (i.e. Assumption~\ref{assumption:general_properties_higher_order_schemes} {(iii)} is not satisfied). 
 The local error of the method \eqref{eqn:bruned_schratz_lri} was studied in \cite{bruned_schratz_2022} and the stability of the scheme (together with a global convergence analysis) was proved in \cite{ostermann2022second}, 
 {showing that the numerical flow $\Phi^\text{LRI2}_{t}$ satisfies Assumptions~\ref{assumption:general_properties_higher_order_schemes} (i) and (ii) for any $\alpha>d/2$ and $p=2$ with $\gamma_1=p$ (see Corollary~5.3 in \cite{bruned_schratz_2022} and Lemmas~4.1 \& 4.2 in \cite{ostermann2022second} for the proof). Hence, as an immediately corollary of \cref{thm:global_error_multistep_schemes}, we have the following second-order convergence result of the sLRI2. 
	\begin{corollary}\label{cor:sLRI2_general}
		Let $\alpha > d/2$. Suppose that $u^n$ is computed from \cref{eq:sLRI2}. For all $0<\tau\leq \tau_0$ with $\tau_0>0$ depending on $M_{\alpha+2}$ and $T$, we have 
		\begin{equation}
			\|u(t_n) - u^n\|_{H^\alpha} \leq C(M_{\alpha+2})\tau^2, \quad 0 \leq n \leq T/\tau.
		\end{equation}
	\end{corollary}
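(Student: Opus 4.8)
The plan is to derive \cref{cor:sLRI2_general} directly from the general convergence result \cref{thm:global_error_multistep_schemes}, in exactly the same way the analogous first-order statement for sLRI1 was obtained. The first thing I would record is that sLRI2 \cref{eq:sLRI2} is precisely the two-step scheme \cref{eqn:general_scheme} built from the one-step flow $\Phi_\tau = \Phi^\text{LRI2}_\tau$ of \cref{eqn:bruned_schratz_lri}; this identification has already been carried out in \cref{sec:general_construction_of_schemes}, so that $\widetilde\Phi_{\pm\tau}$ appearing in \cref{eqn:general_scheme} is simply read off from the decomposition $\Phi^\text{LRI2}_{\pm\tau}(v) = e^{\pm i\tau\Delta}v + \widetilde\Phi_{\pm\tau}(v)$ in \cref{eqn:bruned_schratz_lri}. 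Once this is in place, the corollary is the instance of \cref{thm:global_error_multistep_schemes} with $p = 2$ and $\gamma_1 = p = 2$, provided the base scheme satisfies \cref{assumption:general_properties_higher_order_schemes} (i) and (ii). Since $p$ is even, part (iii) is neither available nor needed, so there is no order improvement from symmetry and the exponent $\tau^{p+1}$ of \cref{eqn:global_error_bound2} does not come into play.

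The only real content is therefore to verify \cref{assumption:general_properties_higher_order_schemes} (i) and (ii) for $\Phi^\text{LRI2}_{\pm\tau}$ with $\alpha > d/2$. The local error bound \cref{eqn:local_error_general_tree_based-scheme} with $p = 2$ and $\gamma_1 = 2$ for the positive-time-step flow $\Phi^\text{LRI2}_\tau$ is exactly the outcome of the local error analysis of Bruned \& Schratz (Corollary~5.3 in \cite{bruned_schratz_2022}), and the unconditional stability estimate \cref{eqn:stability_general_tree_based-scheme} was established by Ostermann et al. (Lemmas~4.1 \& 4.2 in \cite{ostermann2022second}). As in the sLRI1 discussion, I would point out that these references a priori treat only $\tau \ge 0$, whereas \cref{assumption:general_properties_higher_order_schemes} also demands the matching bounds on $\mathcal{R}^{n+1}_{-\tau}$ and on $\widetilde\Phi_{-\tau}$; since the NLSE is time-reversible and $\Phi^\text{LRI2}_{-\tau}$ is obtained by the substitution $\tau \mapsto -\tau$, these negative-step estimates follow by repeating the same arguments verbatim, so I would simply state this and omit the details.

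With the two assumptions verified, \cref{thm:global_error_multistep_schemes}, specifically the bound \cref{eqn:global_error_bound1}, yields $\|u(t_n) - u^n\|_{H^\alpha} \le C(M_{\alpha+\gamma_1})\tau^p = C(M_{\alpha+2})\tau^2$ for $0 \le n \le T/\tau$, valid for all $\tau \le \tau_0$ with $\tau_0$ small depending on $M_{\alpha+2}$ and $T$, which is the claim. I do not anticipate a genuine obstacle here: the proof is a bookkeeping exercise in matching the hypotheses of the general theorem with known results. The one point requiring a little care is confirming that the cited local-error constant from \cite{bruned_schratz_2022} indeed depends only on $\sup_{t\in[t_n,t_{n+1}]}\|u(t)\|_{H^{\alpha+2}}$ --- i.e. that $\gamma_1 = 2$ is the correct loss of derivatives for this particular second-order resonance-based scheme --- since the entire regularity statement of the corollary hinges on that value.
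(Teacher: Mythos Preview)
Your proposal is correct and matches the paper's own argument essentially line for line: the paper also notes that $\Phi^{\text{LRI2}}_{\pm\tau}$ satisfies \cref{assumption:general_properties_higher_order_schemes} (i) and (ii) with $p=2$, $\gamma_1=2$ by citing Corollary~5.3 in \cite{bruned_schratz_2022} and Lemmas~4.1 \& 4.2 in \cite{ostermann2022second}, and then declares \cref{cor:sLRI2_general} an immediate corollary of \cref{thm:global_error_multistep_schemes}. Your additional remarks on the negative-step estimates and the non-applicability of part (iii) for even $p$ are consistent with the paper's treatment.
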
}

{It is also possible to characterise the convergence properties of LRI2 \cref{eqn:bruned_schratz_lri} and hence of sLRI2 in $L^2$-norm and $H^1$-norm (when $d=2, 3$). The central step in this was taken by \cite{bronsard2023error} and we recall it here. By Proposition 4.9 in \cite{bronsard2023error}, the LRI2 \cref{eqn:bruned_schratz_lri} satisfies \cref{assumption:general_properties_higher_order_schemes} (ii) for $0 \leq \alpha \leq d/2$ and $p=2$ with  
	\begin{align*}
		\gamma_1=\begin{cases}
			2+d/4,&\text{\ if\ }\alpha=0,\\
			2+d/2+{\varepsilon},&\text{\ if\ }0< \alpha \leq d/2.\\     
		\end{cases}
	\end{align*}
	In addition, analogous to (32) \& (33) in \cite{bronsard2023error} (cf. also Section 4.4 in \cite{bronsard2023error}), we have the following stability estimate: Fixing $\widetilde \alpha=1+d/4$, when $0 \leq \alpha \leq d/2$, 
 for any $v,w\in H^{\widetilde \alpha}$, \begin{equation*}\label{eqn:stability_estimate_yvonne}
		\|\Phi_{\pm \tau}(v)-\Phi_{\pm \tau}(w)\|_{H^\alpha}\leq e^{\tau M(\|v\|_{\widetilde \alpha},\|w\|_{\widetilde \alpha})}\|v-w\|_{H^\alpha}.
	\end{equation*}
	These allow us to deduce the following error estimates in $L^2$- and $H^1$-norms.
	\begin{theorem}\label{thm:sLRI2inL2}
		Suppose $u^{n}$ is computed using \eqref{eq:sLRI2}. For all $0<\tau\leq \tau_0$ with $\tau_0>0$ depending on $M_{2+d/4}$ and $T$, we have
		\begin{equation*}
			\|u(t_n) - u^n\|_{L^2} \leq C(M_{2+d/4})\tau^2, \quad 0 \leq n \leq T/\tau. 
		\end{equation*}
	\end{theorem}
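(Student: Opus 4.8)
The plan is to mirror the proof of \cref{cor:globalerrorinL2}; this case is in fact simpler since the base scheme LRI2 is of even order $p=2$, so no symmetry-based improvement of the local error is available or needed --- the $\tau^{p+1}=\tau^{3}$ two-step local error already produces second-order global convergence. Writing \cref{eq:sLRI2} in the matrix form \cref{eqn:vectoral_form_time_stepper}, the two ingredients are an $L^{2}$ local error bound for $\mathbf{\Phi}_{\tau}$ and an $L^{2}$ stability bound for $\mathbf{\Phi}_{\tau}$, which are then combined by the Lady Windermere's fan / discrete Gronwall argument of \cref{thm:global_error_multistep_schemes}. As for sLRI1, the $L^{2}$ stability of the nonlinear flow $\widetilde\Phi^{\text{LRI2}}_{\pm\tau}$ is only available with a Lipschitz constant controlled by the higher norm $\|\cdot\|_{H^{\widetilde\alpha}}$ with $\widetilde\alpha=1+d/4$ (the estimate quoted from \cite{bronsard2023error} above), so one must first establish a $\tau$-uniform bound on $\|u^{n}\|_{H^{\widetilde\alpha}}$.

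\emph{$L^{2}$ local error.} From the computation in the proof of \cref{prop:local_error_multistep}, $\|\mathbf{u}(t_{n+1})-\mathbf{\Phi}_{\tau}(\mathbf{u}(t_{n}))\|_{L^{2}}=\|\mathcal{R}^{n}_{\tau}-e^{2i\tau\Delta}\mathcal{R}^{n}_{-\tau}\|_{L^{2}}\leq\|\mathcal{R}^{n}_{\tau}\|_{L^{2}}+\|\mathcal{R}^{n}_{-\tau}\|_{L^{2}}$, and by Proposition~4.9 of \cite{bronsard2023error} (i.e.\ \cref{assumption:general_properties_higher_order_schemes} (ii) with $\alpha=0$, $p=2$, $\gamma_{1}=2+d/4$) this is $\leq\tau^{3}C(M_{2+d/4})$; the same estimate bounds the first-step error $\|u(t_{1})-u^{1}\|_{L^{2}}=\|\mathcal{R}^{0}_{\tau}\|_{L^{2}}$.

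\emph{Uniform $H^{\widetilde\alpha}$ bound.} Since $\widetilde\alpha=1+d/4>d/2$ for $d\leq3$ but the assumed regularity does not suffice to invoke \cref{cor:sLRI2_general} at level $\widetilde\alpha$ (which would need $M_{\widetilde\alpha+2}$), I proceed by interpolation as in \cref{cor:globalerrorinL2}. Put $\sigma=2+d/4$, which satisfies $d/2<\sigma$ for $d<8$; a crude estimate based on the Duhamel representation of $\mathcal{R}^{n}_{\pm\tau}$, the explicit form \cref{eqn:bruned_schratz_lri} of $\widetilde\Phi^{\text{LRI2}}_{\pm\tau}$, the bilinear estimate \cref{eq:bi}, and the uniform boundedness of the Fourier multipliers $\varphi_{1}(\mp 2i\tau\Delta),\varphi_{2}(\mp 2i\tau\Delta)$ gives $\|\mathbf{u}(t_{n+1})-\mathbf{\Phi}_{\tau}(\mathbf{u}(t_{n}))\|_{H^{\sigma}}\lesssim\tau C(M_{\sigma})$. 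Interpolating with the $L^{2}$ bound above yields $\|\mathbf{u}(t_{n+1})-\mathbf{\Phi}_{\tau}(\mathbf{u}(t_{n}))\|_{H^{\widetilde\alpha}}\lesssim\tau^{3-2\widetilde\alpha/\sigma}C(M_{2+d/4})$, and at $\widetilde\alpha=1+d/4$ the exponent equals $(16+d)/(8+d)$, which exceeds $1$ for $d=1,2,3$. Combining this with the unconditional $H^{\widetilde\alpha}$ stability of LRI2 (\cref{assumption:general_properties_higher_order_schemes} (i) for $\widetilde\alpha>d/2$) through \cref{prop:stability_multistep}, and with the crude first-step bound $\|\mathcal{R}^{0}_{\tau}\|_{H^{\widetilde\alpha}}\leq C(M_{\widetilde\alpha})\leq C(M_{2+d/4})$, the Lady Windermere argument of \cref{thm:global_error_multistep_schemes} produces $\|u^{n}\|_{H^{\widetilde\alpha}}\leq C(M_{2+d/4})$ for $0\leq n\leq T/\tau$, for all $\tau\leq\tau_{0}$ sufficiently small.

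\emph{Conclusion.} With the uniform $H^{\widetilde\alpha}$ bound in hand, the $L^{2}$ stability of $\widetilde\Phi^{\text{LRI2}}_{\pm\tau}$ --- the weaker form \cref{eqn:weaker_stability_estimate} with $M$ depending on the $H^{\widetilde\alpha}$ norms, which follows from (32)--(33) of \cite{bronsard2023error} or directly from \cref{eqn:bruned_schratz_lri} by placing each factor $v-w$ in $L^{2}$ and the remaining factors in $L^{\infty}$ via $H^{\widetilde\alpha}\hookrightarrow L^{\infty}$ --- shows, through the variant of \cref{prop:stability_multistep} recorded in the remark after its proof, that $\mathbf{\Phi}_{\tau}$ is $L^{2}$-stable with Lipschitz constant $1+\tau C(M_{2+d/4})$. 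A discrete Gronwall estimate combining this with the $\tau^{3}$ $L^{2}$ local error and the $\tau^{3}$ first-step error then gives $\|u(t_{n})-u^{n}\|_{L^{2}}\leq C(M_{2+d/4})\tau^{2}$. I expect the main obstacle to be the uniform $H^{\widetilde\alpha}$ bound: one must pick the interpolation index $\sigma$ so that only $M_{2+d/4}$ (and not a higher norm) enters the crude high-regularity estimate, verify that the resulting local-error exponent still exceeds $1$, and confirm that the LRI2 stability estimates genuinely fit the weakened hypotheses of \cref{prop:stability_multistep}; the elementary $L^{2}$ Lipschitz bounds for the individual nonlinear terms of \cref{eqn:bruned_schratz_lri} also need some care because $v$ occupies two product slots in several of those terms.
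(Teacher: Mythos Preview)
Your proposal is correct and follows essentially the same route as the paper, which simply states that the proof proceeds analogously to that of \cref{cor:globalerrorinL2}. You have correctly identified the three ingredients (the $L^{2}$ local error from \cite{bronsard2023error}, the interpolated $H^{\widetilde\alpha}$ bound on the numerical solution, and the $L^{2}$ stability with $H^{\widetilde\alpha}$-controlled Lipschitz constant) and verified that the interpolation exponent exceeds~$1$ at $\widetilde\alpha=1+d/4$, $\sigma=2+d/4$.
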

	
	\begin{theorem}\label{thm:sLRI2inH1}
		Let $d=2, 3$. Suppose $u^{n}$ is computed using \eqref{eq:sLRI2}. For all $0<\tau\leq \tau_0$ with $\tau_0>0$ depending on $M_{2+d/2+\vep}$ and $T$, we have
		\begin{equation*}
			\|u(t_n) - u^n\|_{H^1} \leq C(M_{2+d/2+\vep})\tau^2, \quad 0 \leq n \leq T/\tau. 
		\end{equation*}
	\end{theorem}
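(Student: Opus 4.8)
The plan is to replay the argument used for \cref{thm:sLRI2inL2}, but now in the $H^1$ norm. Since LRI2 \cref{eqn:bruned_schratz_lri} is of even order $p=2$, symmetrisation buys no extra order, so the local error of sLRI2 is simply $\mathcal{L}^n=\mathcal{R}^n_\tau-e^{2i\tau\Delta}\mathcal{R}^n_{-\tau}$ and, since $e^{2i\tau\Delta}$ is an isometry on $H^1$, it is enough to control $\|\mathcal{R}^n_{\pm\tau}\|_{H^1}$, where $\mathcal{R}^n_{\pm\tau}$ denotes the local error of LRI2. I would work throughout in the vectorial form \cref{eqn:vectoral_form_time_stepper}, set $\mathbf{e}^n=\mathbf{u}(t_n)-\mathbf{u}^n$, and split the proof into: (a) a uniform bound on $\|u^n\|_{H^{\widetilde\alpha}}$ at the control regularity $\widetilde\alpha:=1+d/4$ appearing in the LRI2 stability estimate; (b) a sharp $H^1$ estimate of $\mathcal{L}^n$; (c) an $H^1$ product-norm stability estimate for the two-step flow; (d) a discrete Gronwall closure.

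For (a) I would simply invoke the uniform bound already produced inside the proof of \cref{thm:sLRI2inL2} (which mirrors that of \cref{cor:globalerrorinL2}): interpolating the crude bound $\|\mathcal{R}^m_{\pm\tau}\|_{H^{\widetilde\alpha+\vep}}\lesssim\tau\,C(M_{\widetilde\alpha+\vep})$ (from the bilinear estimate \cref{eq:bi} applied directly to the definition of $\mathcal{R}^m_{\pm\tau}$) against the refined $L^2$ bound $\|\mathcal{R}^m_{\pm\tau}\|_{L^2}\lesssim\tau^3C(M_{2+d/4})$ (\cref{assumption:general_properties_higher_order_schemes}~(ii) for LRI2 at $\alpha=0$, from \cite{bronsard2023error}) gives an $H^{\widetilde\alpha}$ local error of order strictly above one; a Lady Windermere's fan in $H^{\widetilde\alpha}$ with the standard $H^{\widetilde\alpha}$ stability of LRI2 (valid since $\widetilde\alpha=1+d/4>d/2$ for $d=2,3$) and the usual induction then yields $\sup_n\|u^n\|_{H^{\widetilde\alpha}}\le C(M_{2+d/4})$. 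One checks $d/2<\widetilde\alpha=1+d/4\le 2+d/2+\vep$ for $d=2,3$, so this control norm is covered by the assumed regularity; crucially this route avoids \cref{cor:sLRI2_general}, which would inflate the requirement to $M_{3+d/4}$.

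Step (b) is the crux. Applying \cref{assumption:general_properties_higher_order_schemes}~(ii) for LRI2 directly at $\alpha=1$ would demand control of $M_{3+d/2+\vep}$, one derivative more than stated. Instead I would interpolate: fix $s_2$ with $d/2<s_2\le d/2+\vep$ (this forces $s_2>1$ for $d=2,3$, e.g.\ $s_2=d/2+\vep/2$), combine the refined bound $\|\mathcal{R}^n_{\pm\tau}\|_{L^2}\lesssim\tau^3C(M_{2+d/4})$ with the supercritical bound $\|\mathcal{R}^n_{\pm\tau}\|_{H^{s_2}}\lesssim\tau^3C(M_{s_2+2})$ (\cref{assumption:general_properties_higher_order_schemes}~(ii) for LRI2 with $\gamma_1=p=2$ at $\alpha=s_2>d/2$, cf.\ \cite{bruned_schratz_2022,ostermann2022second}), and interpolate at the index $1\in(0,s_2)$:
\begin{equation*}
\|\mathcal{R}^n_{\pm\tau}\|_{H^1}\lesssim\|\mathcal{R}^n_{\pm\tau}\|_{L^2}^{1-1/s_2}\,\|\mathcal{R}^n_{\pm\tau}\|_{H^{s_2}}^{1/s_2}\lesssim\tau^3C(M_{s_2+2})\le\tau^3C(M_{2+d/2+\vep}).
\end{equation*}
Both endpoints carry the full power $\tau^3$, so no order is lost, and $s_2+2\le 2+d/2+\vep$ gives exactly the claimed regularity. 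Hence $\|\mathcal{L}^n\|_{H^1}\le\|\mathcal{R}^n_\tau\|_{H^1}+\|\mathcal{R}^n_{-\tau}\|_{H^1}\lesssim\tau^3C(M_{2+d/2+\vep})$ for $1\le n\le T/\tau-1$, and identically $\|\mathbf{e}^1\|_{H^1}=\|\mathcal{R}^0_\tau\|_{H^1}\lesssim\tau^3C(M_{2+d/2+\vep})$ for the starting value $u^1=\Phi^\text{LRI2}_\tau(u_0)$.

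For (c) and (d): from the LRI2 stability estimate recalled before the theorem (with $\widetilde\alpha=1+d/4$), using $e^{\tau M}-1\lesssim\tau M$ for $\tau M$ bounded and the isometry of $e^{\pm i\tau\Delta}$ on $H^1$, I would deduce $\|\widetilde\Phi^\text{LRI2}_{\pm\tau}(v)-\widetilde\Phi^\text{LRI2}_{\pm\tau}(w)\|_{H^1}\le\tau\,M(\|v\|_{H^{\widetilde\alpha}},\|w\|_{H^{\widetilde\alpha}})\|v-w\|_{H^1}$, i.e.\ a bound of the form \cref{eqn:weaker_stability_estimate}. Repeating the proof of \cref{prop:stability_multistep} verbatim with this controlled-constant estimate in place of \cref{assumption:general_properties_higher_order_schemes}~(i), and using that the shift matrix in \cref{eqn:vectoral_form_time_stepper} and $e^{2i\tau\Delta}$ are isometries on $H^1\times H^1$, gives $\|\mathbf{\Phi}_\tau(\mathbf v)-\mathbf{\Phi}_\tau(\mathbf w)\|_{H^1}\le(1+\tau C(\|v_1\|_{H^{\widetilde\alpha}},\|w_1\|_{H^{\widetilde\alpha}}))\|\mathbf v-\mathbf w\|_{H^1}$. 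Feeding this, the local error bound from (b), and the uniform bounds $\|u(t_n)\|_{H^{\widetilde\alpha}}\le M_{\widetilde\alpha}$, $\|u^n\|_{H^{\widetilde\alpha}}\le C(M_{2+d/4})$ into the Lady Windermere's fan argument of \cref{thm:global_error_multistep_schemes} yields
\begin{equation*}
\|\mathbf e^{n+1}\|_{H^1}\le(1+\tau C(M_{2+d/2+\vep}))\|\mathbf e^n\|_{H^1}+\tau^3C(M_{2+d/2+\vep}),\qquad 1\le n\le T/\tau-1,
\end{equation*}
and the discrete Gronwall inequality gives $\|u(t_n)-u^n\|_{H^1}\le\|\mathbf e^n\|_{H^1}\le C(M_{2+d/2+\vep})\tau^2$; the threshold $\tau_0$ depending on $M_{2+d/2+\vep}$ and $T$ enters in the induction securing the uniform $H^{\widetilde\alpha}$ bound and in linearising $e^{\tau M}$. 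The main obstacle I anticipate is exactly the regularity bookkeeping in steps (a) and (b): one must organise the interpolations so that the $H^1$ local error keeps the full third order while costing only $M_{2+d/2+\vep}$, and so that the control norm $H^{1+d/4}$ is bounded using only $M_{2+d/4}$; the $H^1$ product-norm stability and the Gronwall closure are then routine transcriptions of \cref{prop:stability_multistep} and \cref{thm:global_error_multistep_schemes}.
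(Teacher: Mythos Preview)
Your proposal is correct and its overall architecture—uniform $H^{1+d/4}$ control on $u^n$, $H^1$ local-error bound, $H^1$ stability with constants governed by $H^{1+d/4}$, Lady Windermere's fan—is exactly the template the paper points to when it says the proof ``follows similarly to that of \cref{cor:globalerrorinL2}''. The one substantive difference is in step (b): the paper's intended route is to invoke Proposition~4.9 of \cite{bronsard2023error} directly at $\alpha=1$, which already yields $\|\mathcal{R}^n_{\pm\tau}\|_{H^1}\lesssim\tau^3C(M_{2+d/2+\vep})$ (the quantity $2+d/2+\vep$ recorded there is the \emph{total} regularity required for $0<\alpha\le d/2$, not an increment over $\alpha$), whereas you manufacture the same $H^1$ bound by interpolating between the $L^2$ estimate and the supercritical $H^{s_2}$ estimate with $s_2\in(d/2,d/2+\vep]$. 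Your route is slightly longer but self-contained from ingredients already stated in the paper and avoids having to interpret how $\gamma_1$ is meant in the sub-critical range; the paper's route is a one-line citation once that interpretation is accepted. One small loose end: the bound $\|\widetilde\Phi^{\text{LRI2}}_{\pm\tau}(v)-\widetilde\Phi^{\text{LRI2}}_{\pm\tau}(w)\|_{H^1}\le\tau M\|v-w\|_{H^1}$ does not follow from the quoted $e^{\tau M}$-stability of $\Phi^{\text{LRI2}}_{\pm\tau}$ via ``$e^{\tau M}-1\lesssim\tau M$'' alone (subtracting $e^{\pm i\tau\Delta}(v-w)$ is not controlled by that difference); it nevertheless holds, because every term of $\widetilde\Phi^{\text{LRI2}}_{\pm\tau}$ in \cref{eqn:bruned_schratz_lri} carries an explicit factor of $\tau$ and can be estimated term-by-term with the bilinear inequalities, giving precisely the form \cref{eqn:weaker_stability_estimate} you need.
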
}
The proofs of \cref{thm:sLRI2inL2,thm:sLRI2inH1} follow similarly to that of \cref{cor:globalerrorinL2} and are thus omitted. 

	\section{Numerical examples}\label{sec:numerical_experiments}
	
	In this section, we shall show some numerical results to validate our error estimates and to demonstrate the superiority of the newly proposed explicit sLRIs, with sLRI1 \cref{eq:sLRI1} and sLRI2 \cref{eq:sLRI2} as examples. We first test the convergence of the sLRI1 and sLRI2 with initial data of varying regularity. Then we perform some long-time simulations to show the near-conservation of mass and energy. We also present comparisons with existing LRIs to highlight the advantages of the explicit sLRIs in accuracy, efficiency, and long-time performance. 
	
	In the following, we only consider the one dimensional case and fix $d=1$. {Note that in this case the NLSE is completely integrable and thus falls into the class of equations for which (at least in the fully discrete case) rigorous long-time results are available (cf. \cite{hairer2013geometric}). As a word of caution, there are currently no rigorous guarantees on the long-time performance of symmetric methods applied to non-integrable PDEs.} To quantify the error, we introduce the following error functions:
	\begin{equation*}
		e_{L^2}(t_n) := \| u(\cdot, t_n) - u^n  \|_{L^2}, \quad  e_{H^1}(t_n) := \| u(\cdot, t_n) - u^{n} \|_{H^1}, \quad 0 \leq n \leq T/\tau. 
	\end{equation*}
	
	We generate an $H^\alpha$-initial datum $u_0$ through
	\begin{equation}\label{eq:ini}
		u_0(x) = \frac{\phi(x)}{\| \phi \|_{L^2}}, \quad \phi(x) = \sum_{l = -N_\text{ref}}^{N_\text{ref}-1} \frac{\xi_l}{\langle l \rangle^{\alpha+1/2}} e^{2 \pi i l x}, \qquad x \in \mathbb{T},  
	\end{equation}
	where $\xi_l = \text{rand}(-1, 1)+i \  \text{rand}(-1, 1)$ with $\text{rand}(-1, 1)$ returning a random number uniformly distributed in $[-1, 1]$, $N_\text{ref} = 2^{17}$ being the maximum frequency of the numerical initial datum (which is chosen much larger than the maximum frequency of the numerical solution), and 
	\begin{equation*}
		\langle s \rangle = \left\{
		\begin{aligned}
			&|s|, &s \neq 0, \\
			&1, &s=0, 
		\end{aligned}
		\right., \quad s \in \R. 
	\end{equation*}
    {We remark here that although we adopt the random initial data \cref{eq:ini} in our numerical experiments for convergence tests and long-time simulations, the same results can be observed for other choices of deterministic initial data.} {In all of the following experiments our spatial discretisation is a Fourier pseudospectral method on a uniform mesh in $x$ with spatial resolution $h$.}
    
	First, we show errors in $L^2$- and $H^1$-norms of the explicit sLRIs applied to the NLSE \cref{NLSE} with $H^\alpha$-initial data \cref{eq:ini} for different $\alpha$. We only present the results for $\mu = 1$ for simplicity. Similar results can be observed for $\mu = -1$. The {reference} solutions are obtained by the sLRI2 with $\tau = \tau_\text{e} = 10^{-5}$ and $h = h_\text{e} =\pi \times 2^{-13}$. In the computation, we fix $h = h_\text{e}$ and vary $\tau$ from $10^{-4}$ to $10^{-1}$. The results are presented in \cref{fig:conv_dt_sLRI1,fig:conv_dt_sLRI2} for the sLRI1 and sLRI2, respectively.
	
	\begin{figure}[htbp]
		\centering
		{\includegraphics[width=0.475\textwidth]{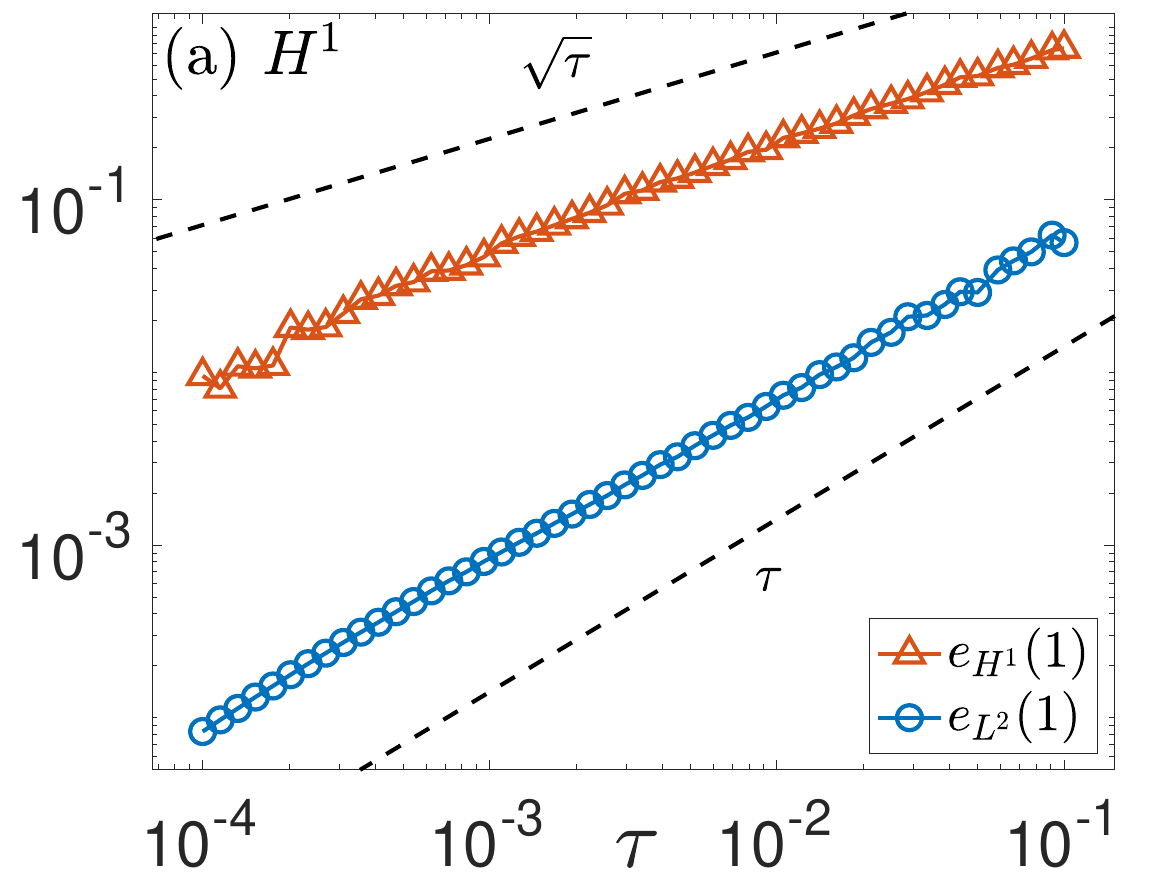}}\hspace{1em}
		{\includegraphics[width=0.475\textwidth]{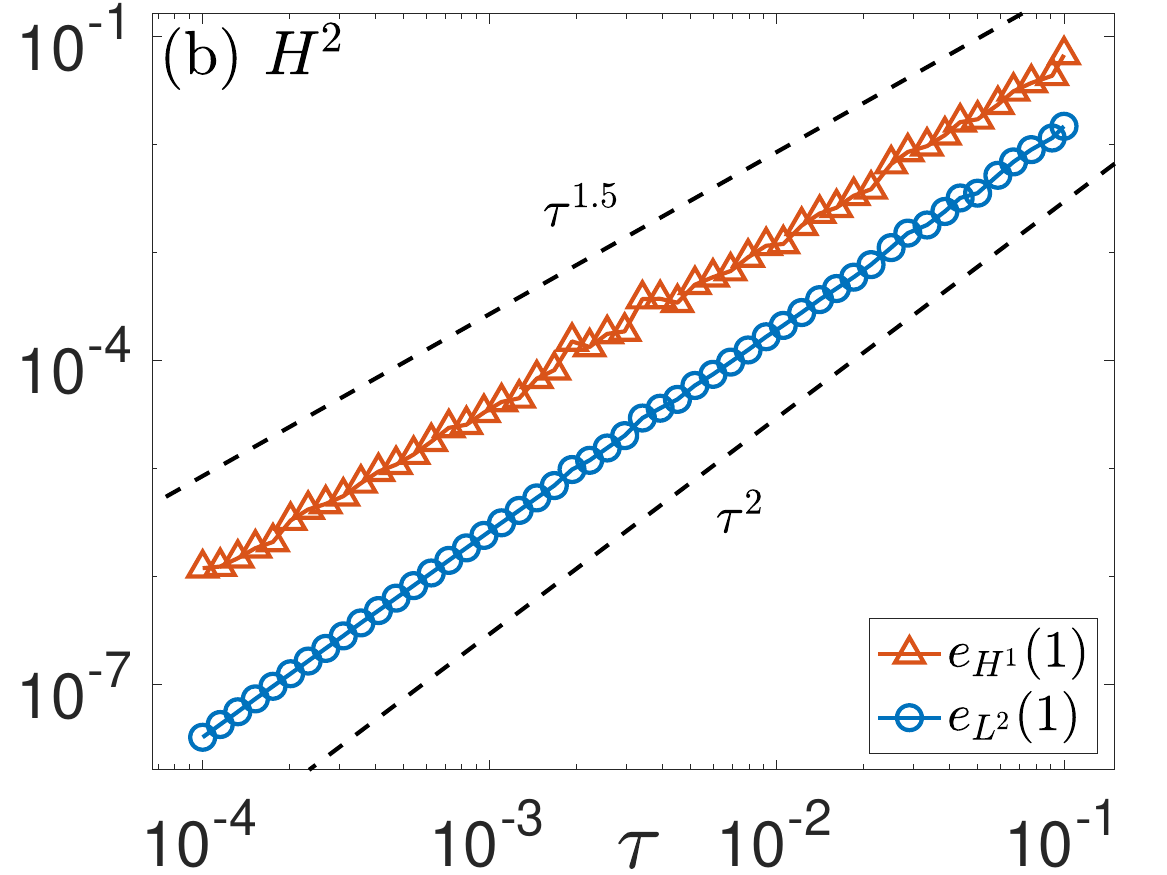}} \\
		{\includegraphics[width=0.475\textwidth]{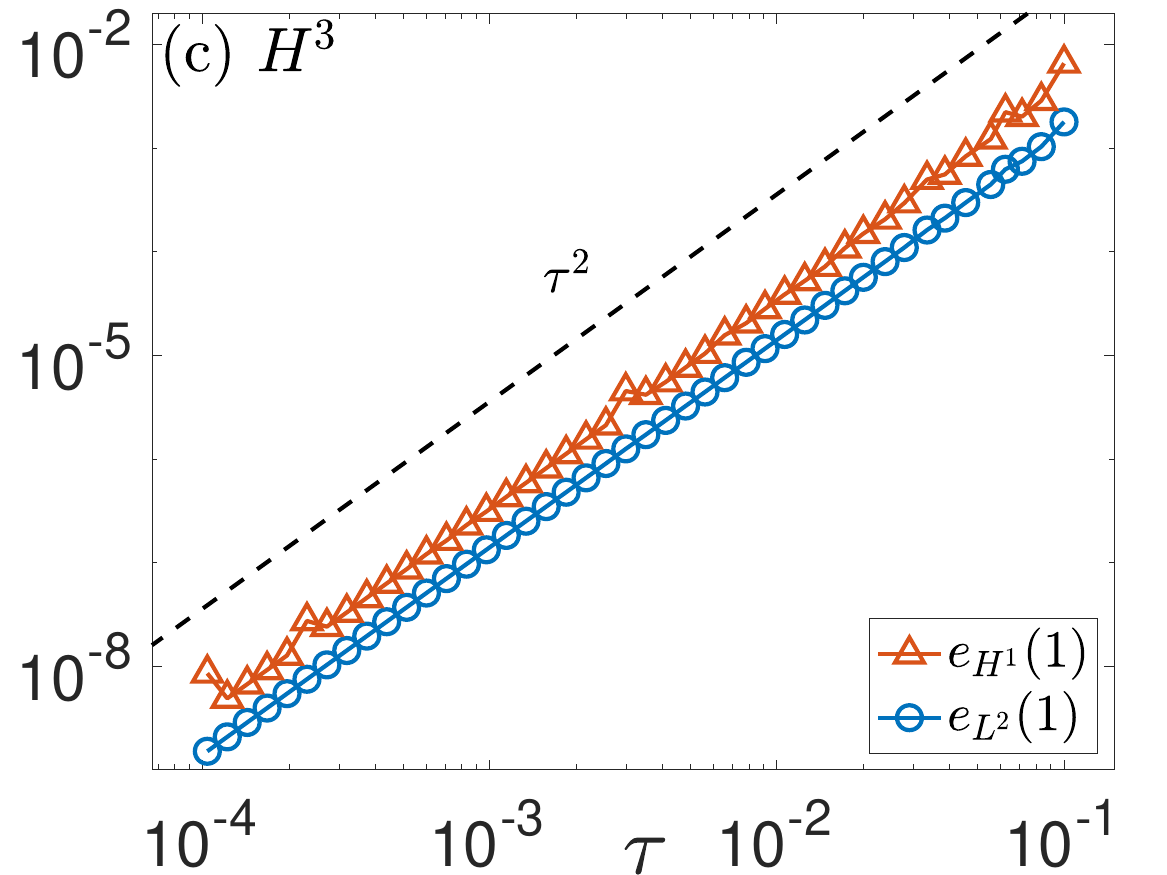}}\hspace{1em}
		{\includegraphics[width=0.475\textwidth]{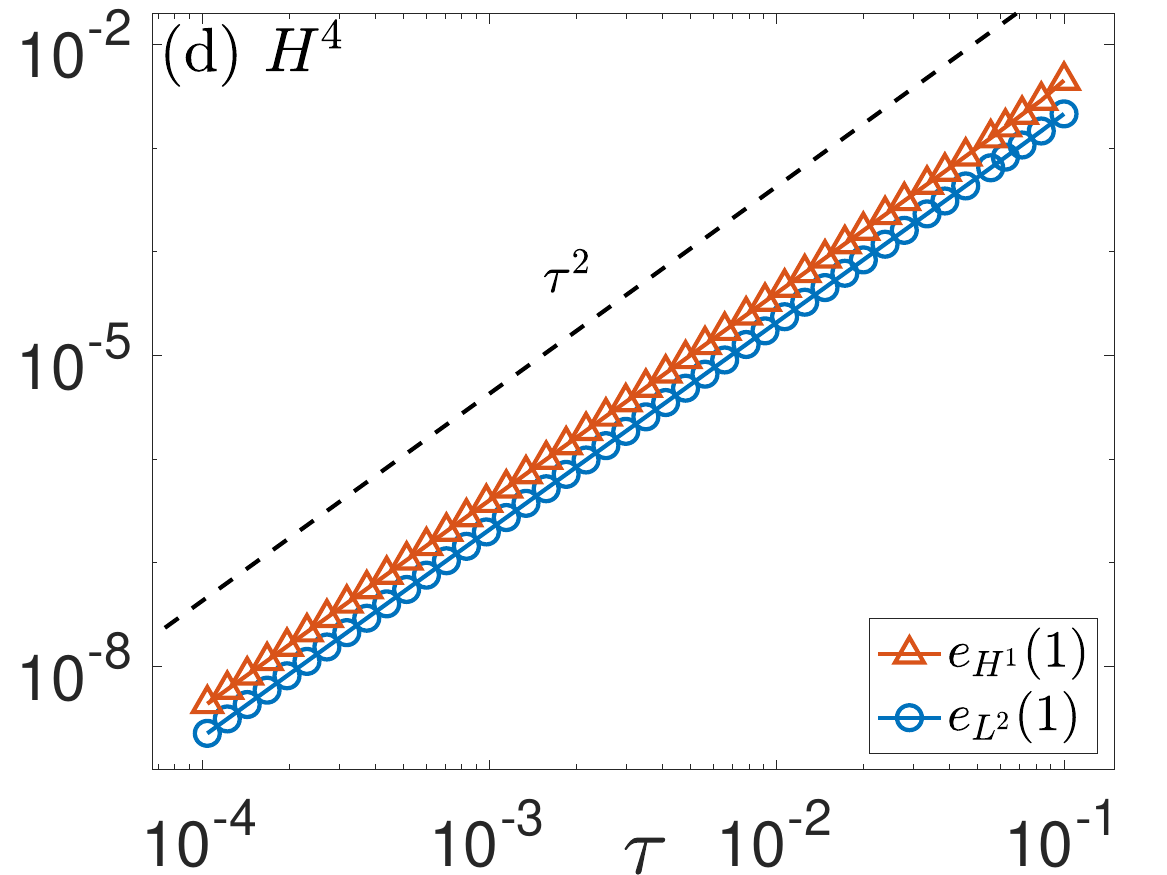}}
		\caption{Errors in $L^2$- and $H^1$-norms of the explicit sLRI1 for the NLSE \cref{NLSE} with initial data of regularity $H^\alpha \ (\alpha = 1, 2, 3, 4)$}
	\label{fig:conv_dt_sLRI1}
\end{figure}
\begin{figure}[htbp]
	\centering
	{\includegraphics[width=0.475\textwidth]{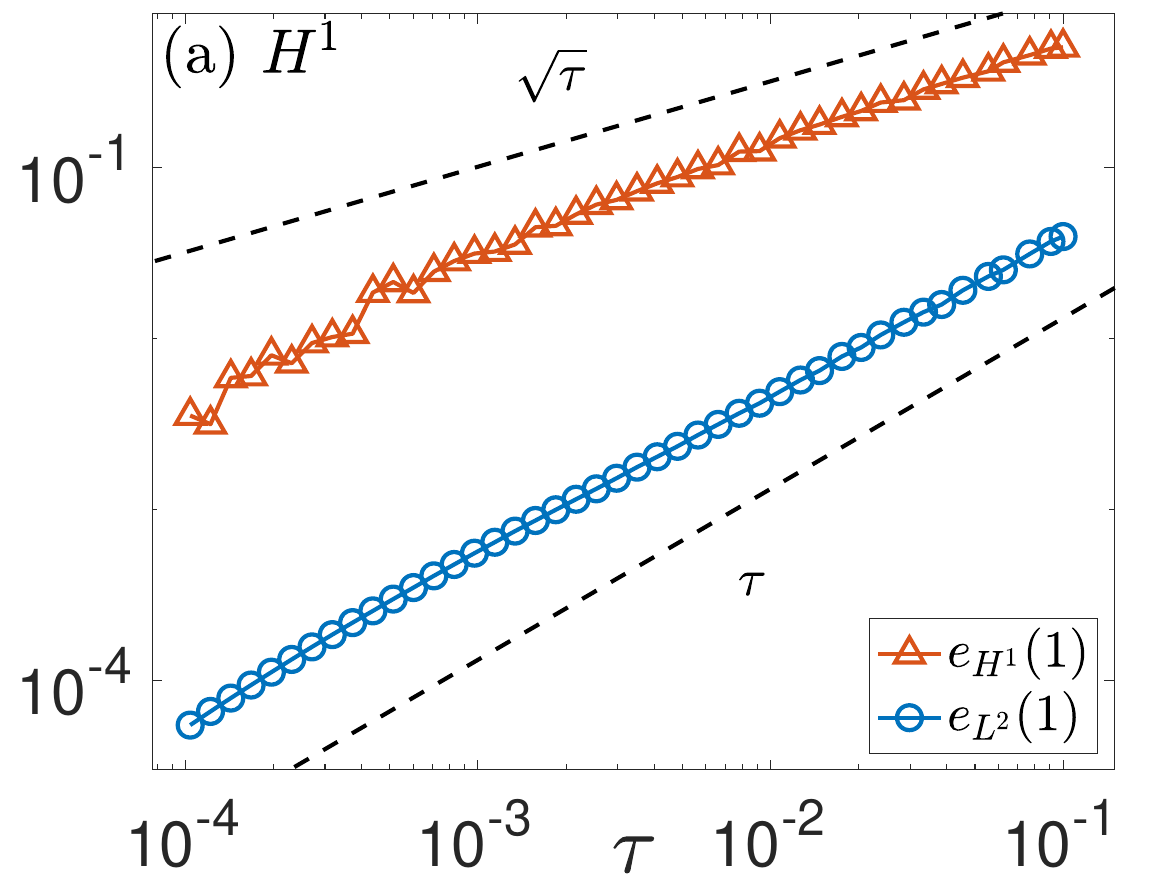}}\hspace{1em}
	{\includegraphics[width=0.475\textwidth]{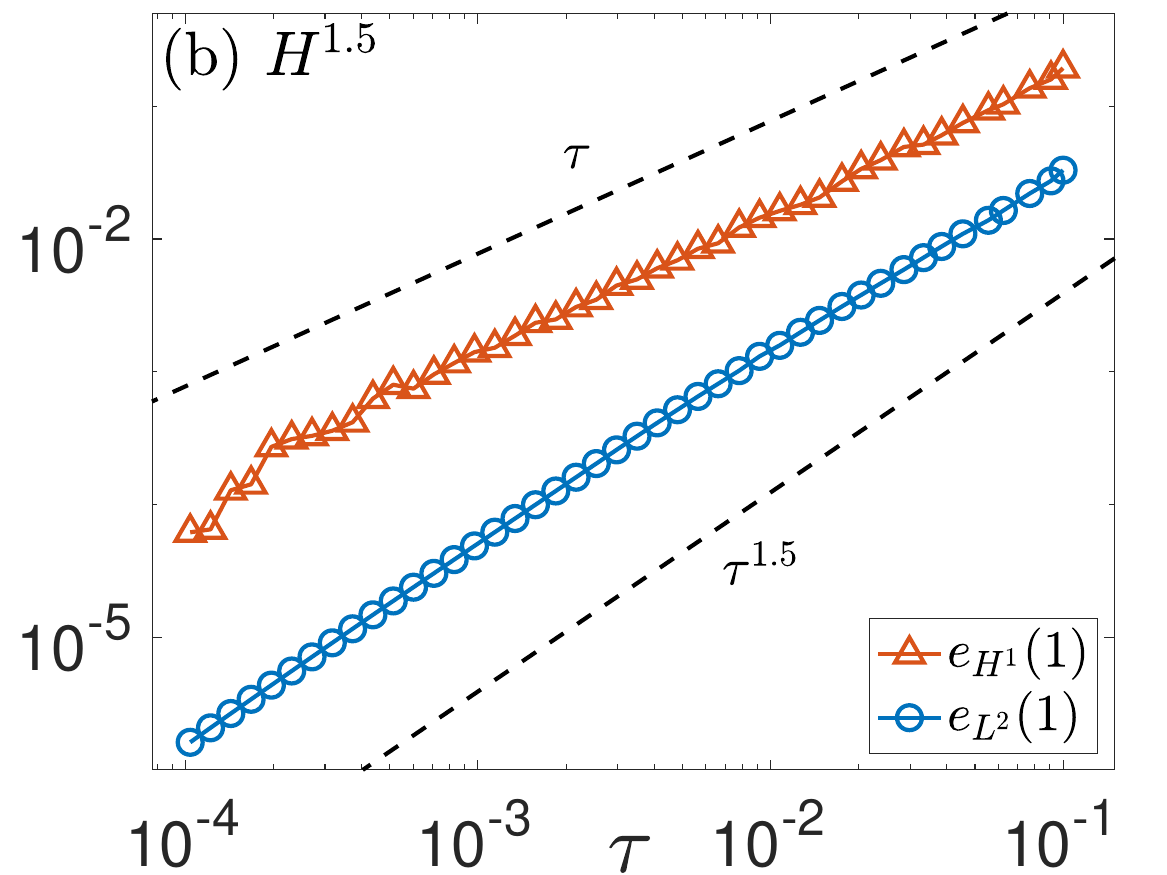}} \\
	{\includegraphics[width=0.475\textwidth]{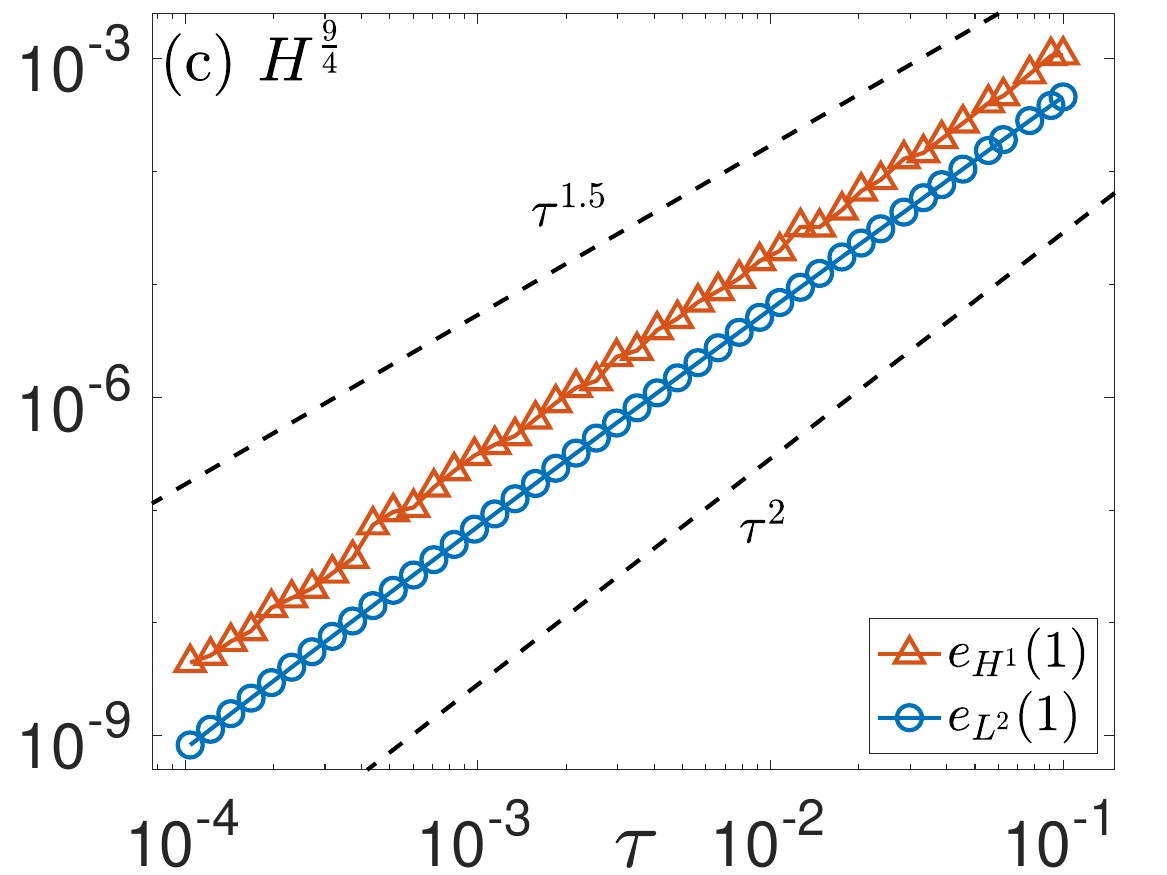}}\hspace{1em}
	{\includegraphics[width=0.475\textwidth]{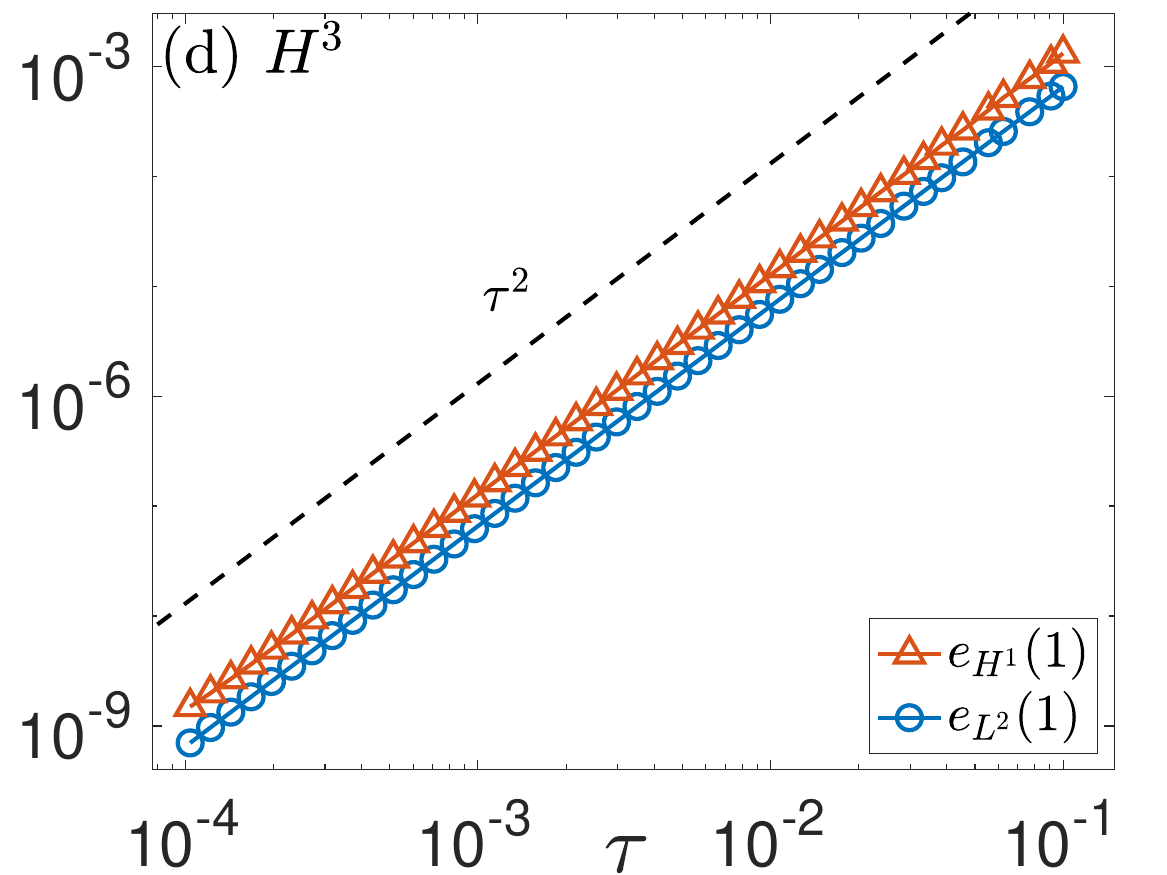}}
	\caption{Errors in $L^2$- and $H^1$-norms of the explicit sLRI2 for the NLSE \cref{NLSE} with initial data of regularity $H^\alpha \ (\alpha = 1, 1.5, \revisions{2.25}, 3)$}\vspace{-0.2cm}
\label{fig:conv_dt_sLRI2}
\end{figure}

From \cref{fig:conv_dt_sLRI1}, we can observe that for the sLRI1 method, second-order convergence in $L^2$- and $H^1$-norms can be observed for $H^3$- and $H^4$-initial data, respectively. In comparison, second-order convergence of the sLRI2 method can be observed in $L^2$- and $H^1$-norms for $H^\frac{9}{4}$- and $H^3$-initial data, respectively. {These observations are consistent with our error estimates in \cref{thm:improvedhigh,cor:globalerrorinL2,thm:sLRI2inL2,cor:sLRI2_general}. However, when the regularity requirements for optimal second-order convergence are slightly violated, the errors become oscillatory without clear order reduction. Also, for the fractional order convergence obtained for sLRI1 in \cref{thm:improvedhigh,cor:globalerrorinL2}, the numerical results exhibit slightly higher convergence orders than those predicted by the error estimates. It remains unclear whether the regularity assumptions in our error estimates are optimally weak.} 


Then we test the long-time performance of the explicit sLRIs under initial data \cref{eq:ini} of varying regularity. For comparison, we also show the results of the corresponding non-symmetric LRIs, namely LRI1 and LRI2. In this test, we choose $\mu = -1$. In computation, we choose $\tau = 0.005$ and $h = \pi \times 2^{-9}$ with the final time $T = 1000$. In \cref{fig:mass_energy_H1_rand_ini_comp,fig:mass_energy_H15_rand_ini_comp,fig:mass_energy_H2_rand_ini_comp,fig:mass_energy_H25_rand_ini_comp}, we plot the relative errors of {mass \eqref{eqn:mass} and energy \eqref{eqn:energy}} of different methods up to $T$ for initial data of varying regularity.

We can observe that the explicit symmetric LRIs demonstrate clear near conservation of both mass and energy up to very long time in all cases. In particular, sLRI2 performs slightly better than sLRI1 with smaller errors in mass and energy. In addition, the explicit ``symmetrized" integrators perform significantly superior to the non-symmetric integrators in terms of both the accuracy and the long-time behaviour, while their cost is nearly the same as the non-symmetric versions LRI1 \& LRI2. \revisions{Moreover, the additional numerical test for the non-integrable quintic NLSE in \cref{rem:quintic} suggests that such excellent long time performance of sLRI2 is not restricted to integrable systems.}
\begin{figure}[h!]
\centering
{\includegraphics[width=0.475\textwidth]{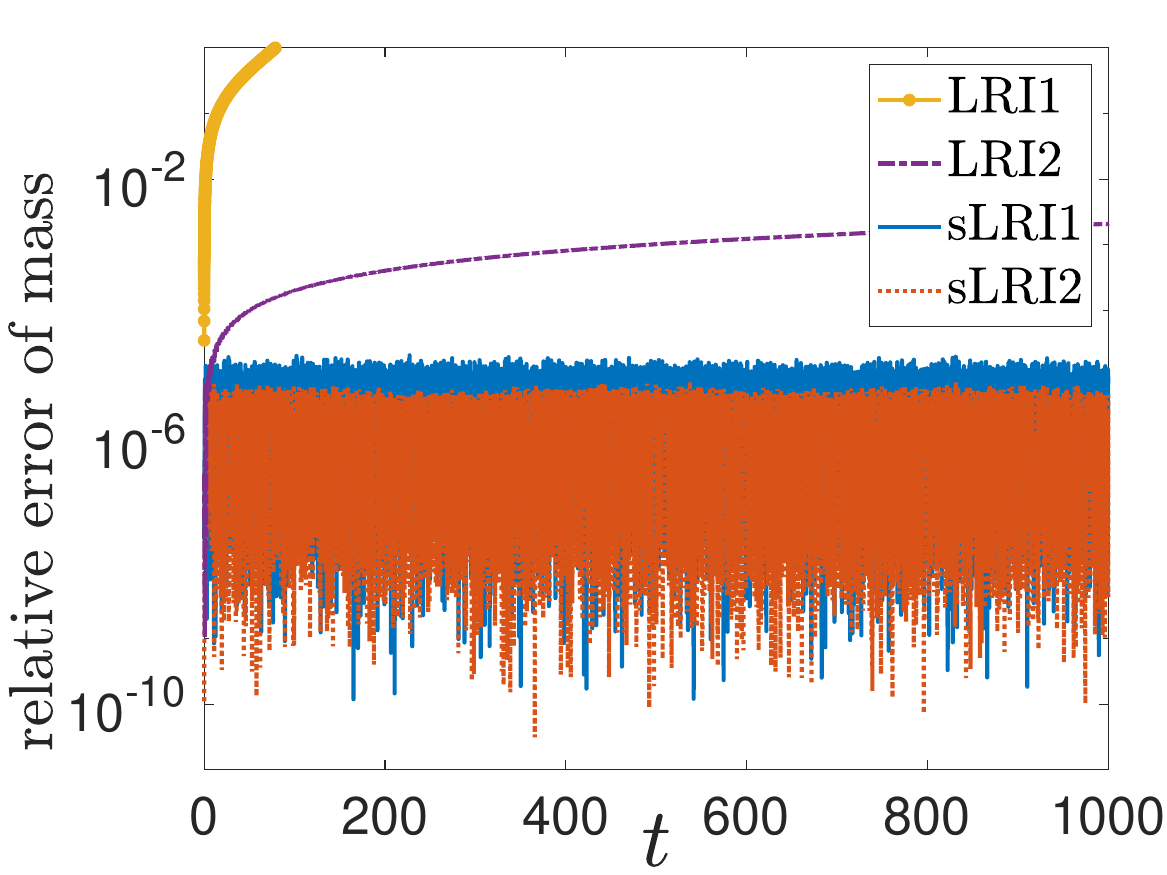}}\hspace{1em}
{\includegraphics[width=0.475\textwidth]{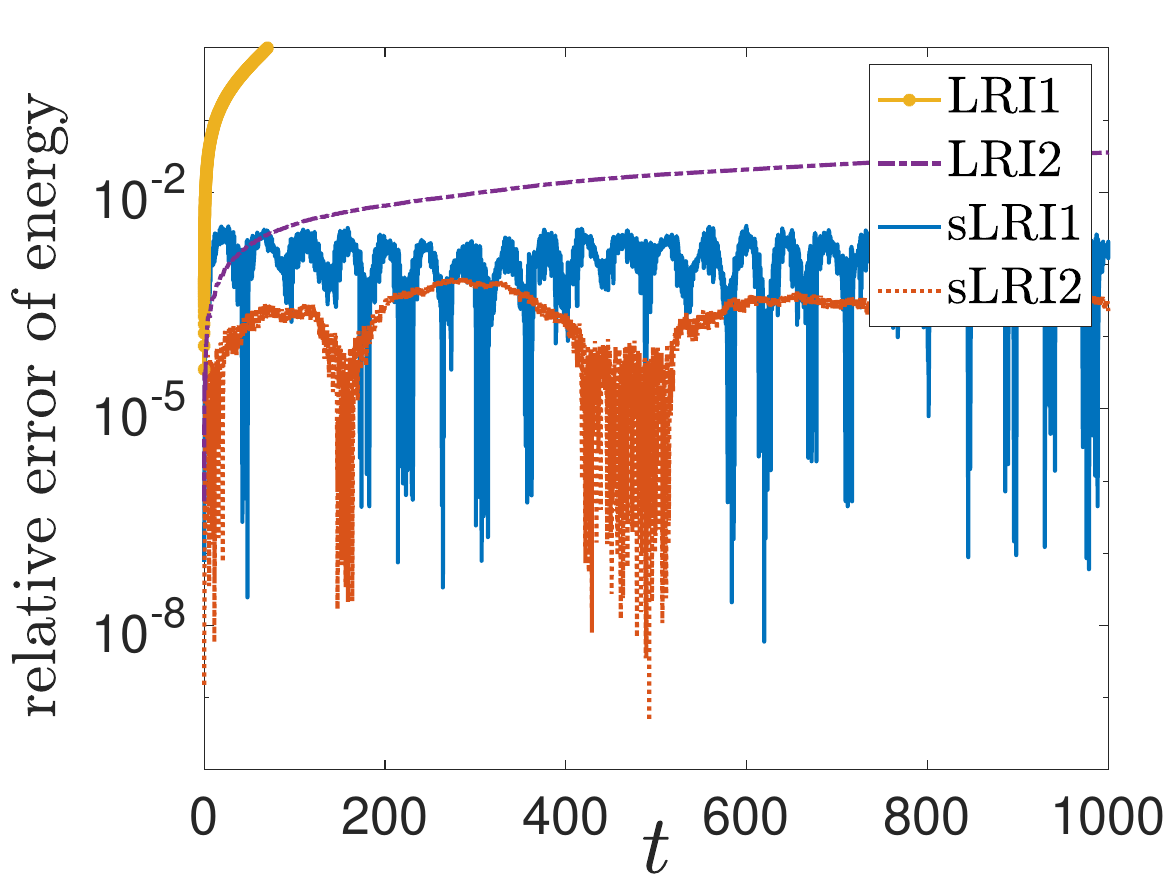}}
\caption{Relative errors of mass (left) and energy (right) of different methods for the NLSE \cref{NLSE} with $H^{1}$ initial datum}
\label{fig:mass_energy_H1_rand_ini_comp}
\end{figure}

\begin{figure}[htbp]
\centering
{\includegraphics[width=0.475\textwidth]{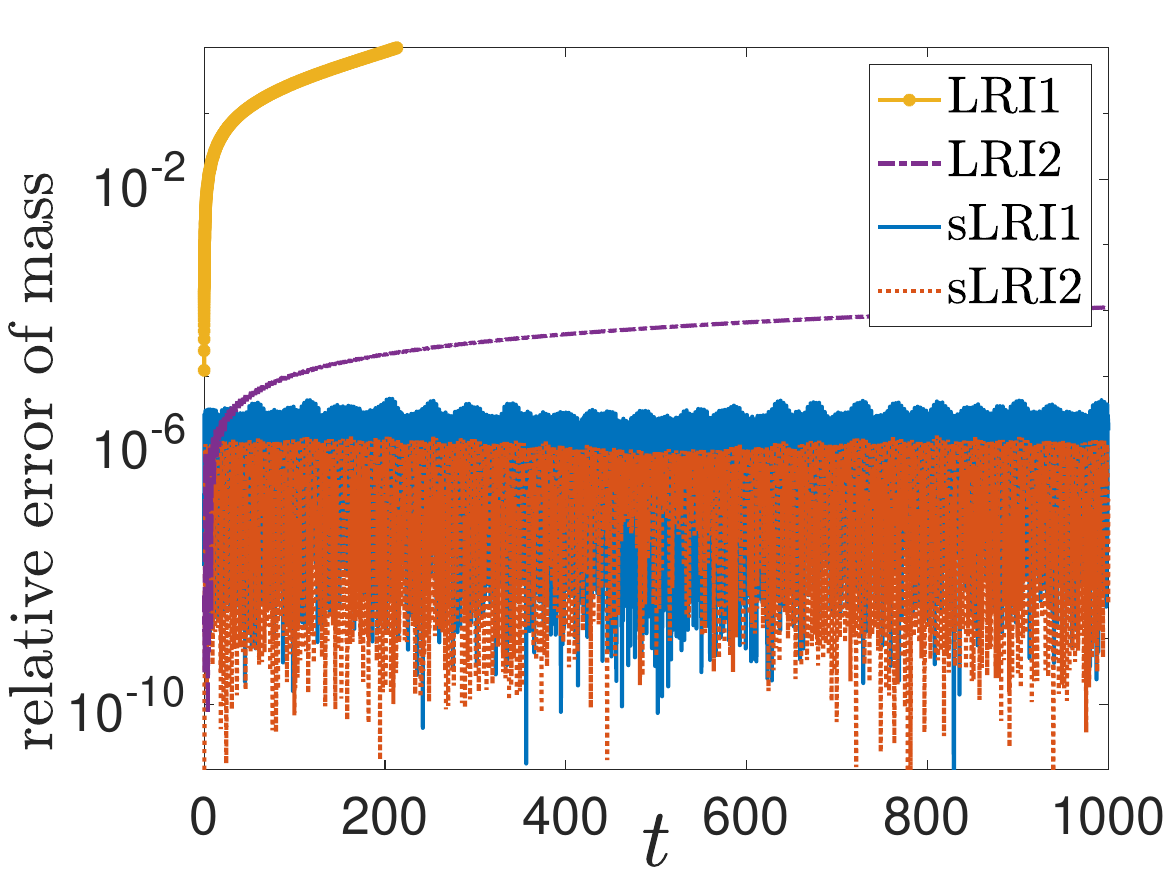}}\hspace{1em}
{\includegraphics[width=0.475\textwidth]{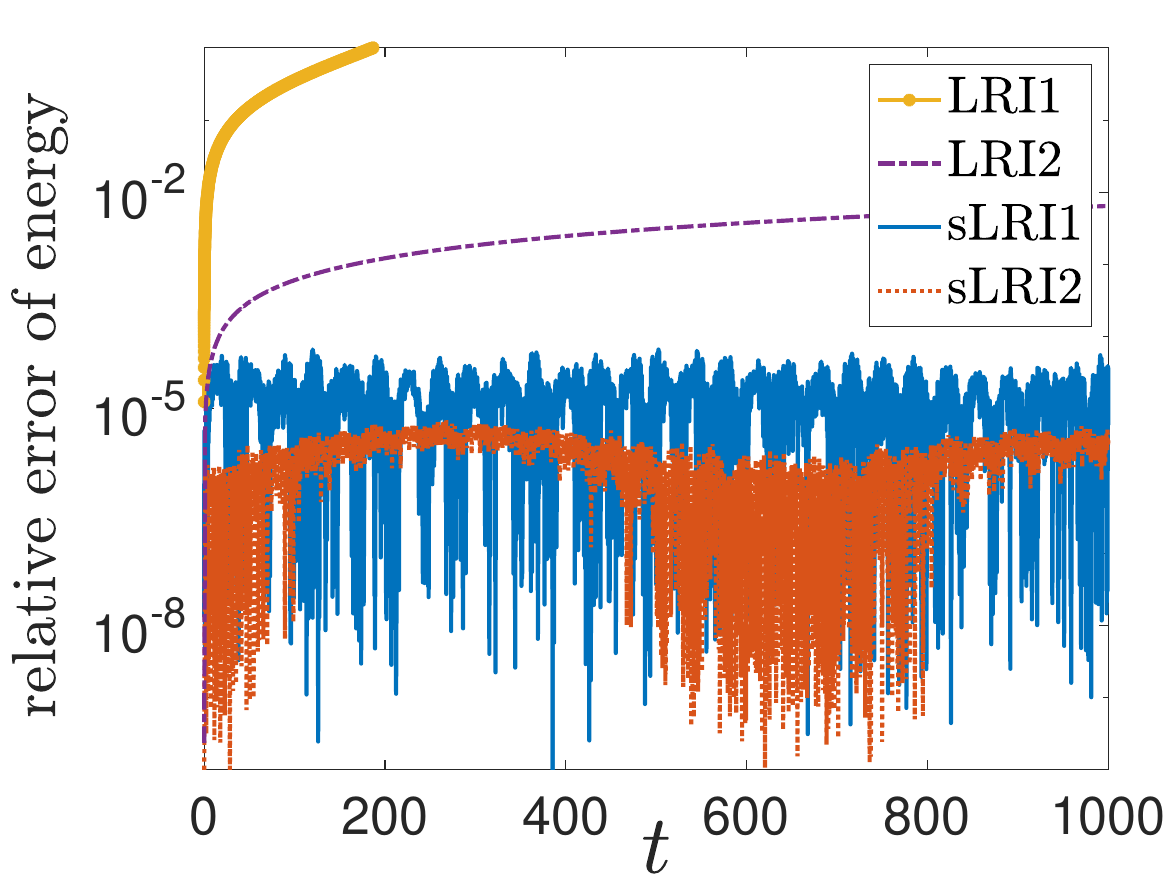}}
\caption{Relative errors of mass (left) and energy (right) of different methods for the NLSE \cref{NLSE} with $H^{1.5}$ initial datum}
\label{fig:mass_energy_H15_rand_ini_comp}
\end{figure}
\begin{figure}[h!]
\centering
{\includegraphics[width=0.475\textwidth]{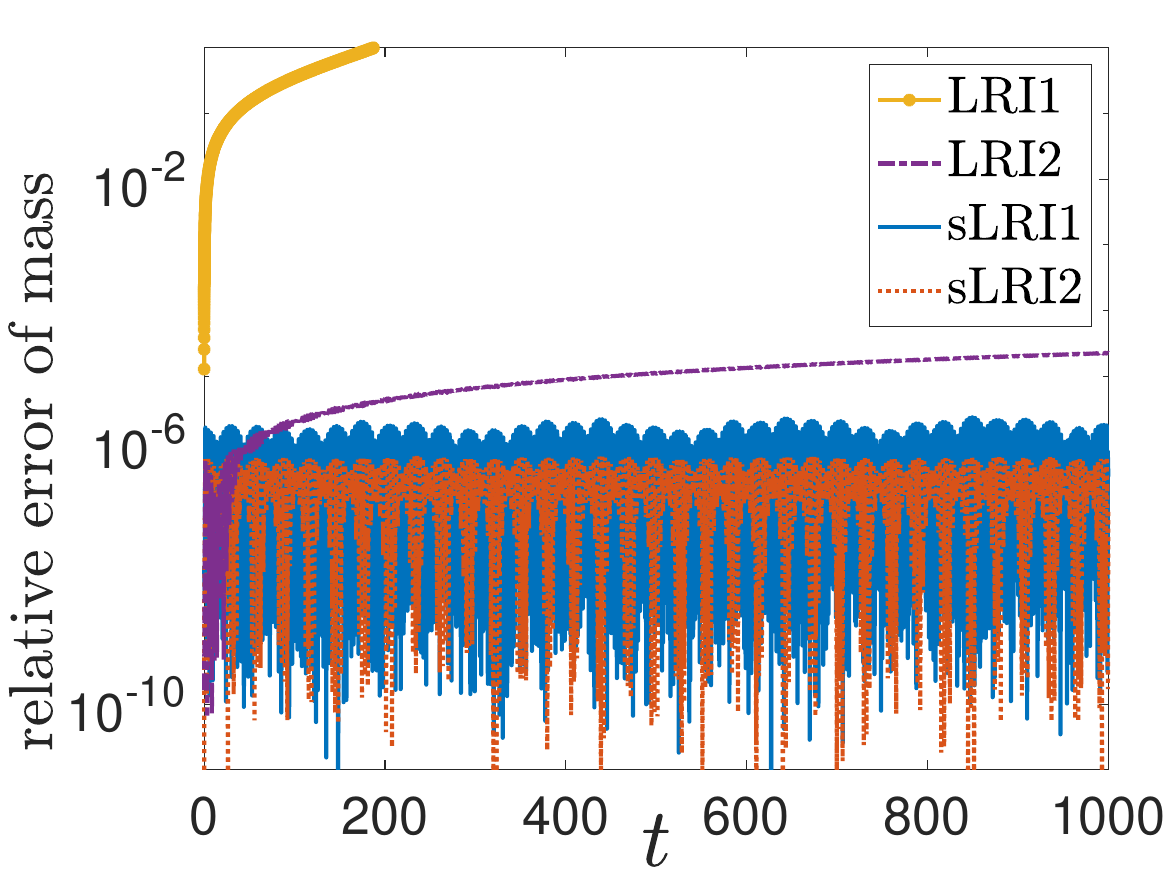}}\hspace{1em}
{\includegraphics[width=0.475\textwidth]{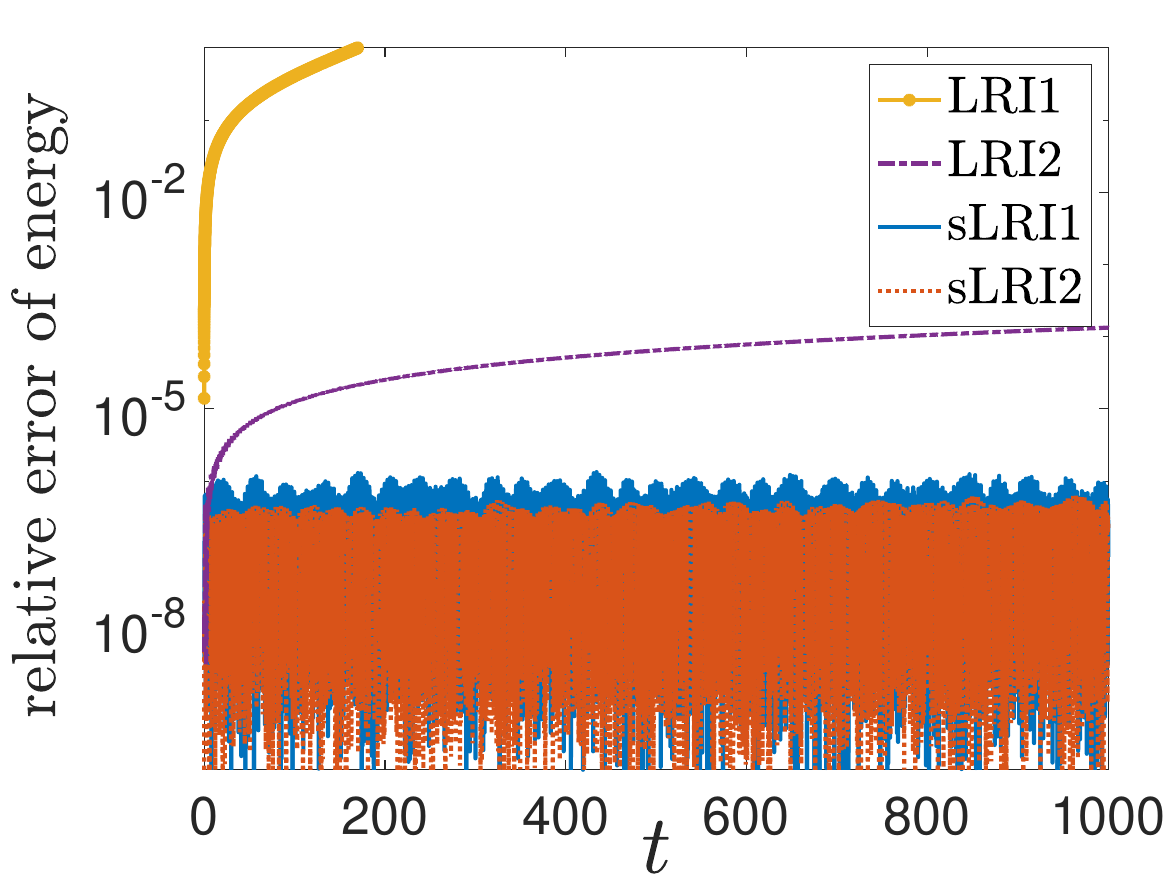}}
\caption{Relative errors of mass (left) and energy (right) of different methods for the NLSE \cref{NLSE} with $H^{2}$ initial datum}
\label{fig:mass_energy_H2_rand_ini_comp}
\end{figure}
\begin{figure}[h!]
\centering
{\includegraphics[width=0.475\textwidth]{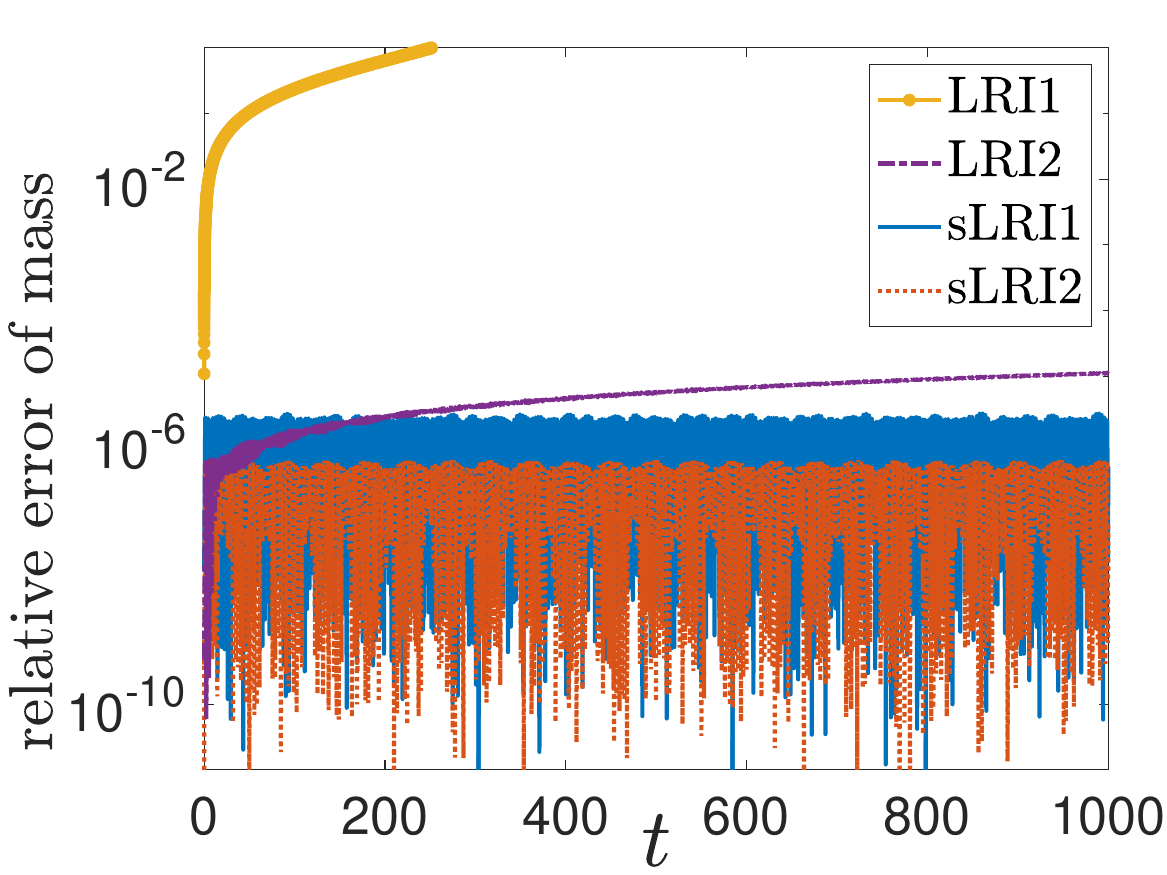}}\hspace{1em}
{\includegraphics[width=0.475\textwidth]{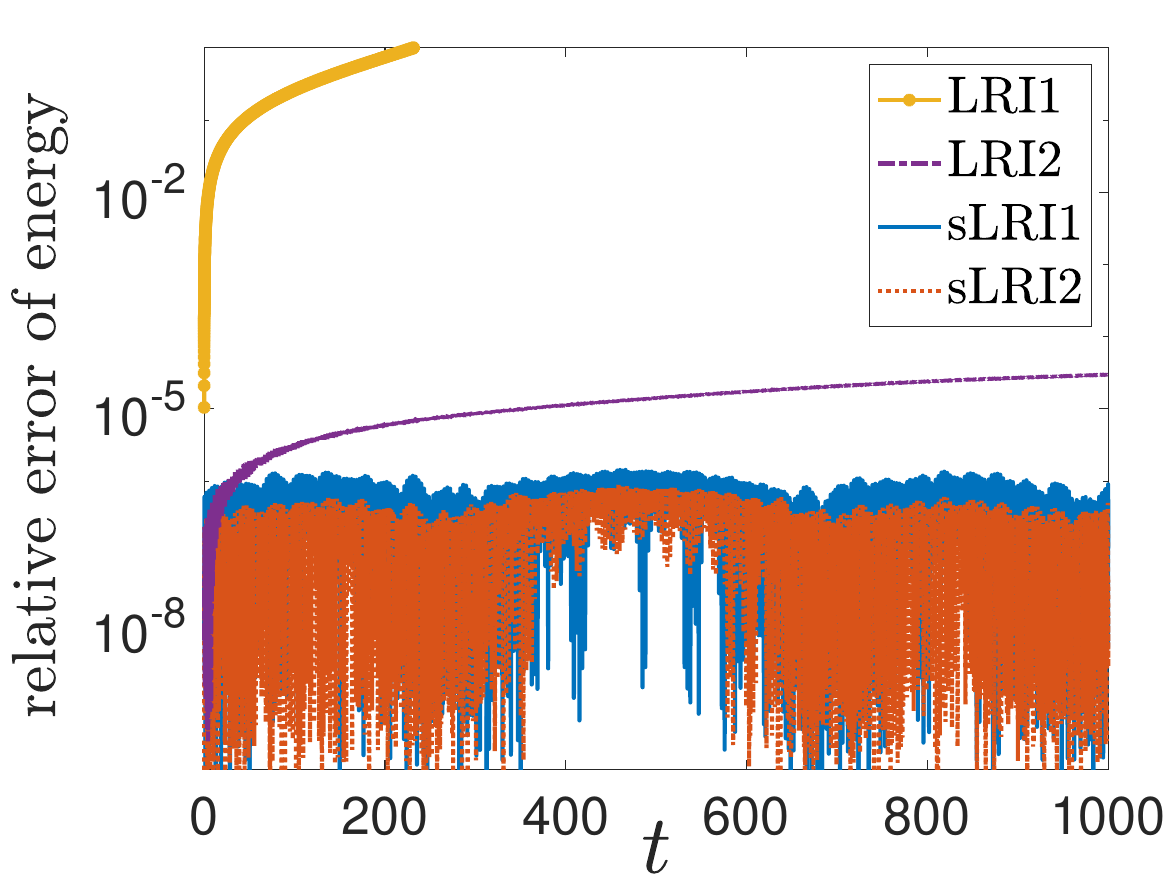}}
\caption{Relative errors of mass (left) and energy (right) of different methods for the NLSE \cref{NLSE} with $H^{2.5}$ initial datum}
\label{fig:mass_energy_H25_rand_ini_comp}
\end{figure}

\revisions{\begin{remark}\label{rem:quintic}
	As mentioned above, the cubic NLSE in one dimension is completely integrable. We present in this remark an additional numerical test for the non-integrable quintic NLSE 
	\begin{equation}\label{eq:qNLSE}
		i \partial_t u = -\Delta u + \mu |u|^4u, \qquad t>0, \quad x \in \mathbb{T}, 
	\end{equation} 
	under an $H^2$-initial data given by \cref{eq:ini}. Similarly, we compute the relative errors of mass and energy for the non-symmetric LRI1 and the symmetrized sLRI1. The LRI1 for the quintic NLSE \cref{eq:qNLSE} can be found in \cite{ostermann_schratz2018low}, and the corresponding sLRI1 can be obtained by \cref{eqn:general_scheme}. The numerical results are presented in \cref{fig:mass_energy_H2_quintic}, where we can still observe the near conservation of mass and energy of the sLRI1.
        \begin{figure}[htbp]
		\centering
		{\includegraphics[width=0.475\textwidth]{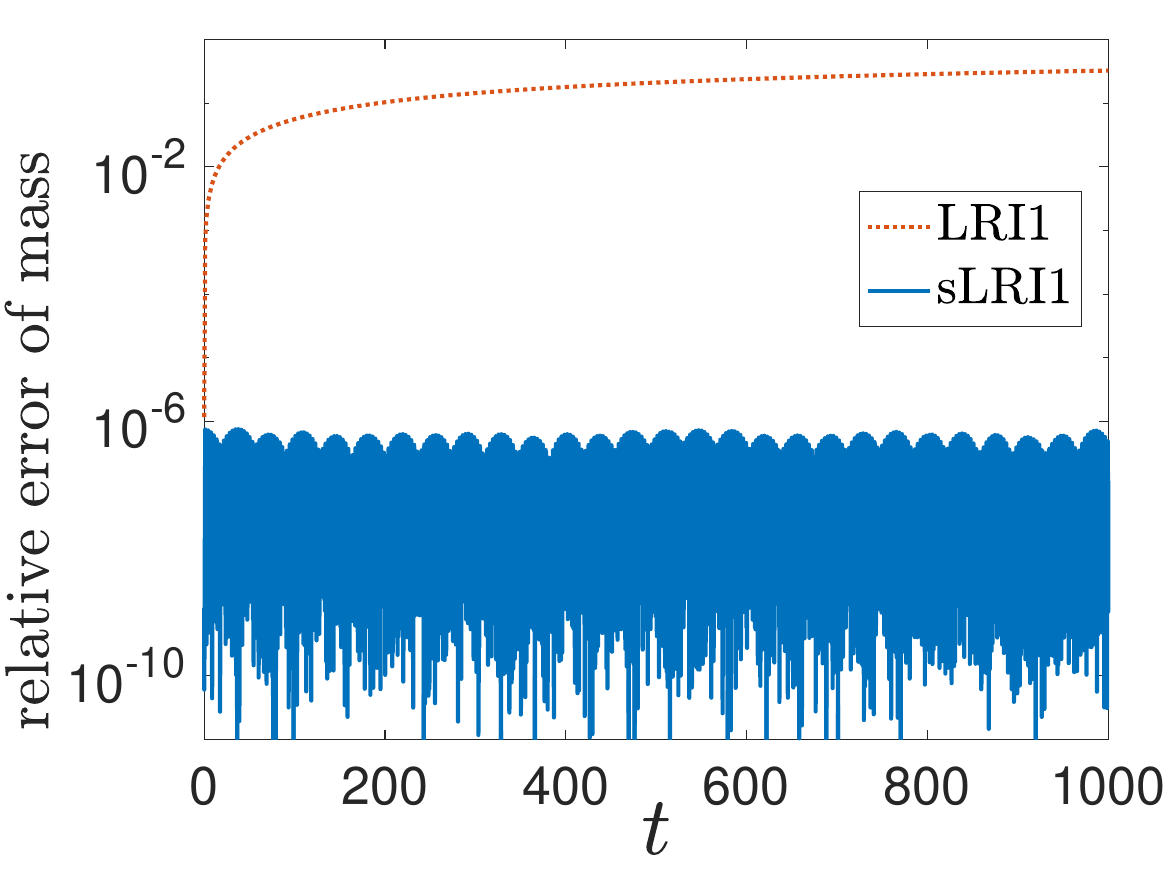}}\hspace{1em}
		{\includegraphics[width=0.475\textwidth]{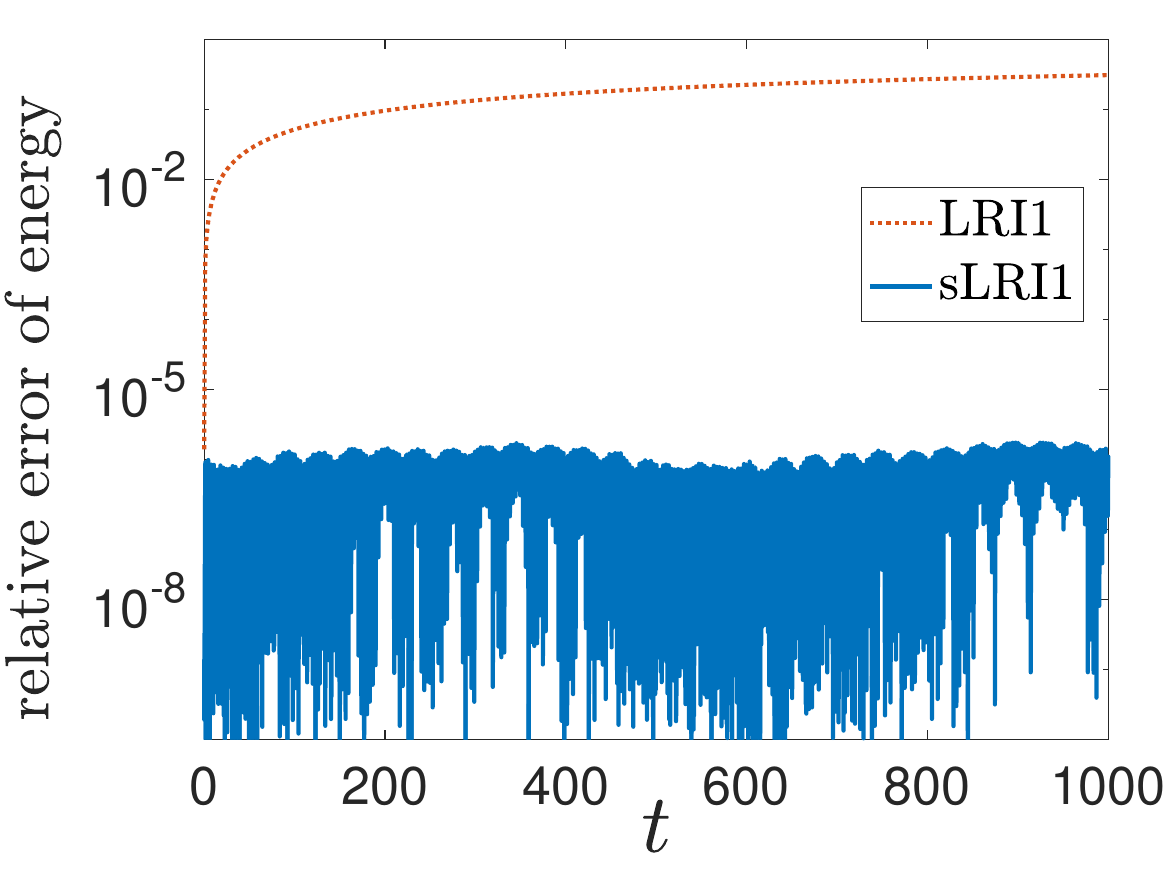}}
		\caption{\revisions{Relative errors of mass (left) and energy (right) of LRI1 and sLRI1 for the quintic NLSE \cref{eq:qNLSE} with $H^{2}$ initial datum}}
		\label{fig:mass_energy_H2_quintic}
	\end{figure}
\end{remark}}

Finally, to demonstrate the advantages of explicity, we present comparisons with existing implicit symmetric LRIs with two typical examples: IMsLRI1 given by (5) in \cite{alama2023symmetric} and IMsLRI2 given by (4.18) in \cite{alamabronsard_et_al23}. \revisions{In this example, we solve the NLSE \cref{NLSE} with low-regularity initial data that are physically more meaningful, i.e. the low-regularity initial data are chosen as stationary states of the NLSE with low-regularity potentials.} To be precise, we consider a scaled torus $\widetilde{\mathbb{T}} := (-16, 16)$ and consider the (action) ground states of the time-independent NLSE \cite{wang2022,liu2023}
\begin{equation}\label{sNLSE}
	-\Delta \phi(x) + V(x) \phi(x) - |\phi(x)|^2 \phi(x) + \omega \phi(x) = 0, \quad x \in \widetilde{\mathbb{T}},  
\end{equation}
where $V = V(x) \in \R$ is a (low regularity) potential and $\omega \in \R$ is a given constant. The following choices of $V$ and $\omega$ will be used:  
\begin{equation}
	\begin{aligned}
		&V_1 = -\delta(x),  \quad &&V_2(x) = -\frac{1}{\sqrt{x}}, \quad &&V_3(x) = -10 \times \mathbf{1}_{[-2, 2]}(x), \qquad x\in\widetilde{\mathbb{T}} \\
		&\omega_1 = 4,  \quad &&\omega_2 = 6, \quad &&\omega_3 = 14, 
	\end{aligned}
\end{equation}
where $\delta$ is the Dirac delta function and $\mathbf{1}_\Omega$ is the indicator function of a set $\Omega$. Let $\phi_j \ (j=1, 2, 3)$ be the action ground states of \cref{sNLSE} with $V = V_j$ and $\omega = \omega_j$ (plotted in \cref{fig:V_phi}), which can be computed by the standard discrete normalized gradient flow method \cite{wang2022}. According to the regularity of the potentials $V_j$, one has, roughly, $\phi_j \in H^{1+j/2}$ for $j=1, 2, 3$.
\begin{figure}[h!]
	\centering
	{\includegraphics[width=0.325\textwidth]{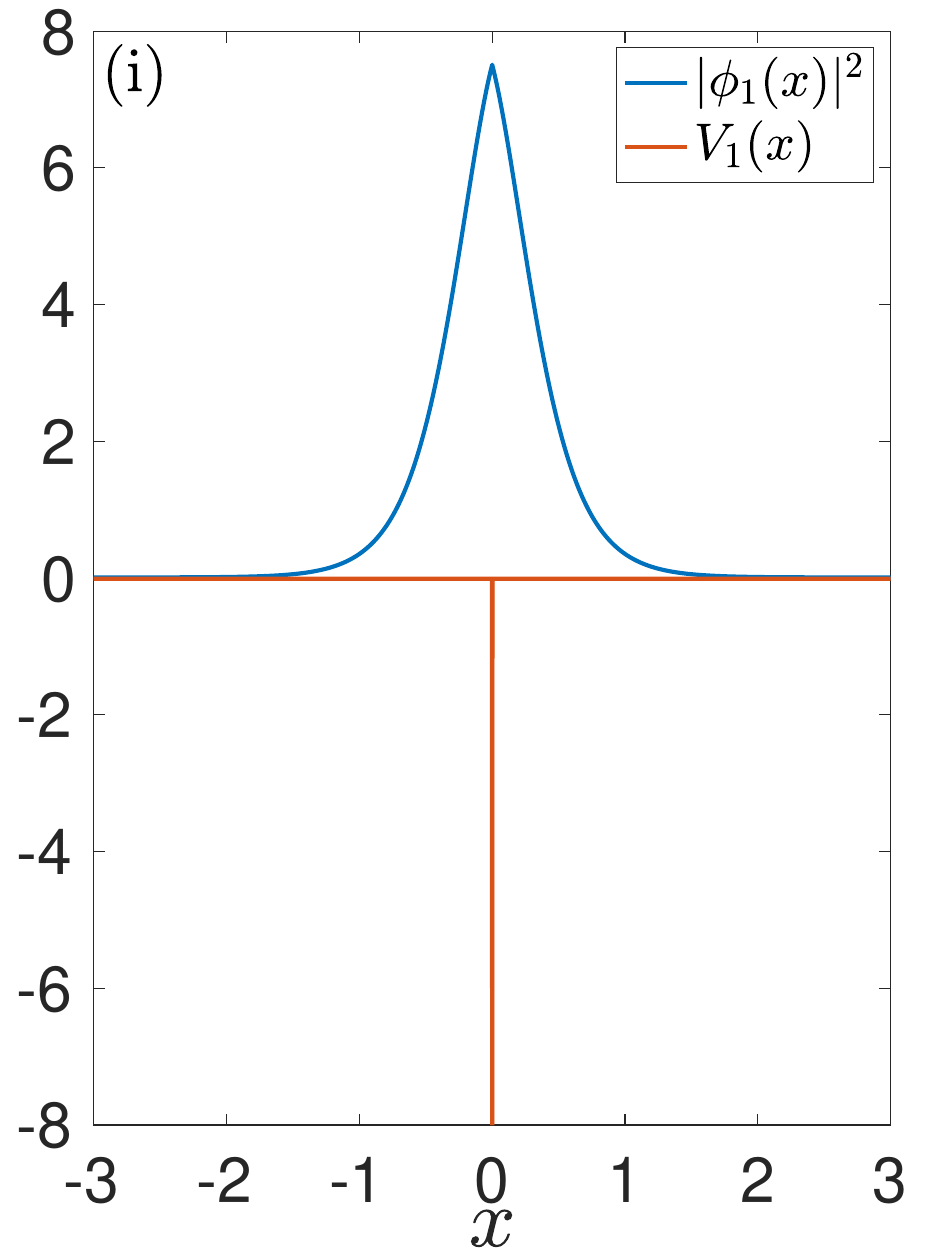}}\hspace{0.1em}
	{\includegraphics[width=0.325\textwidth]{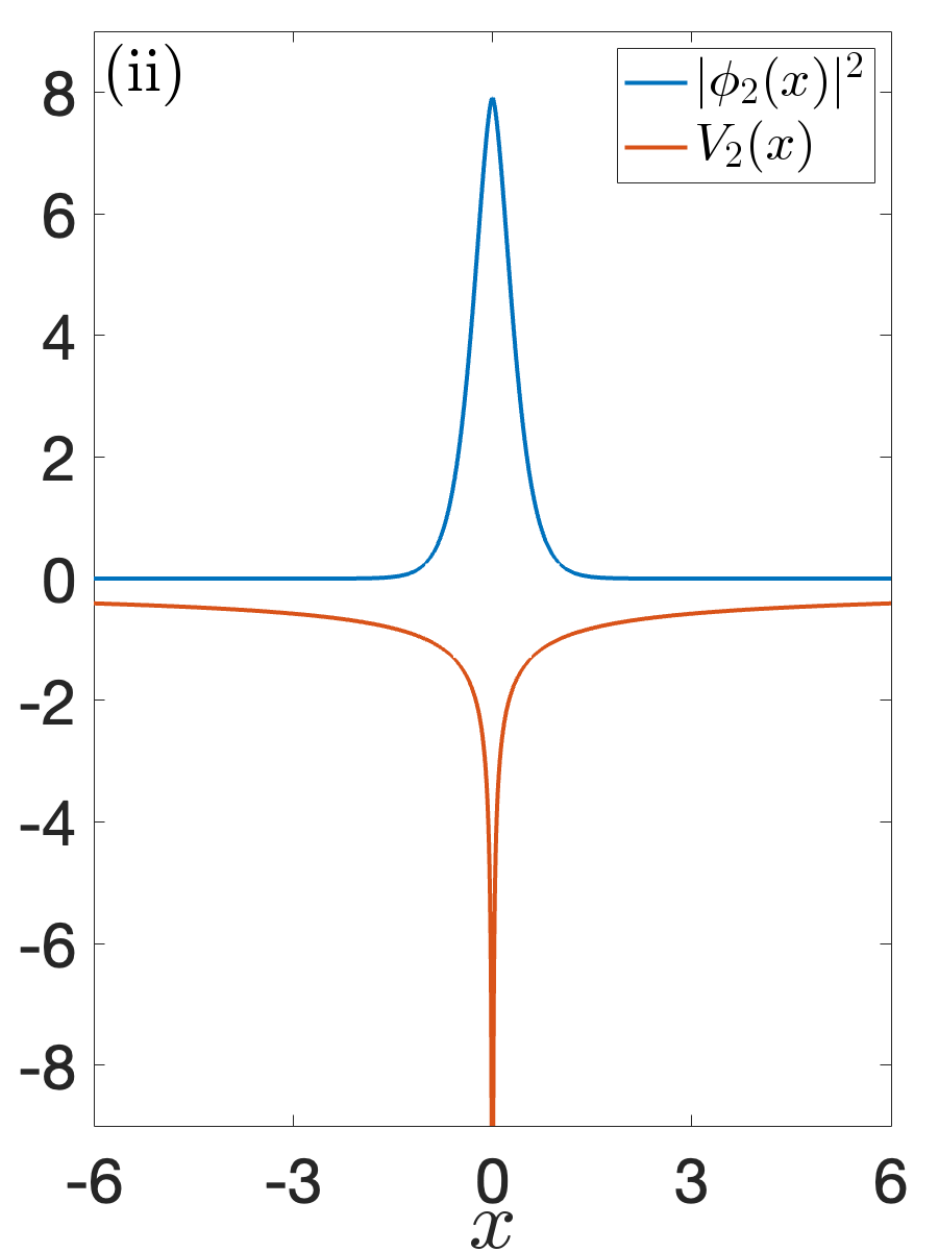}}\hspace{0.1em}
	{\includegraphics[width=0.325\textwidth]{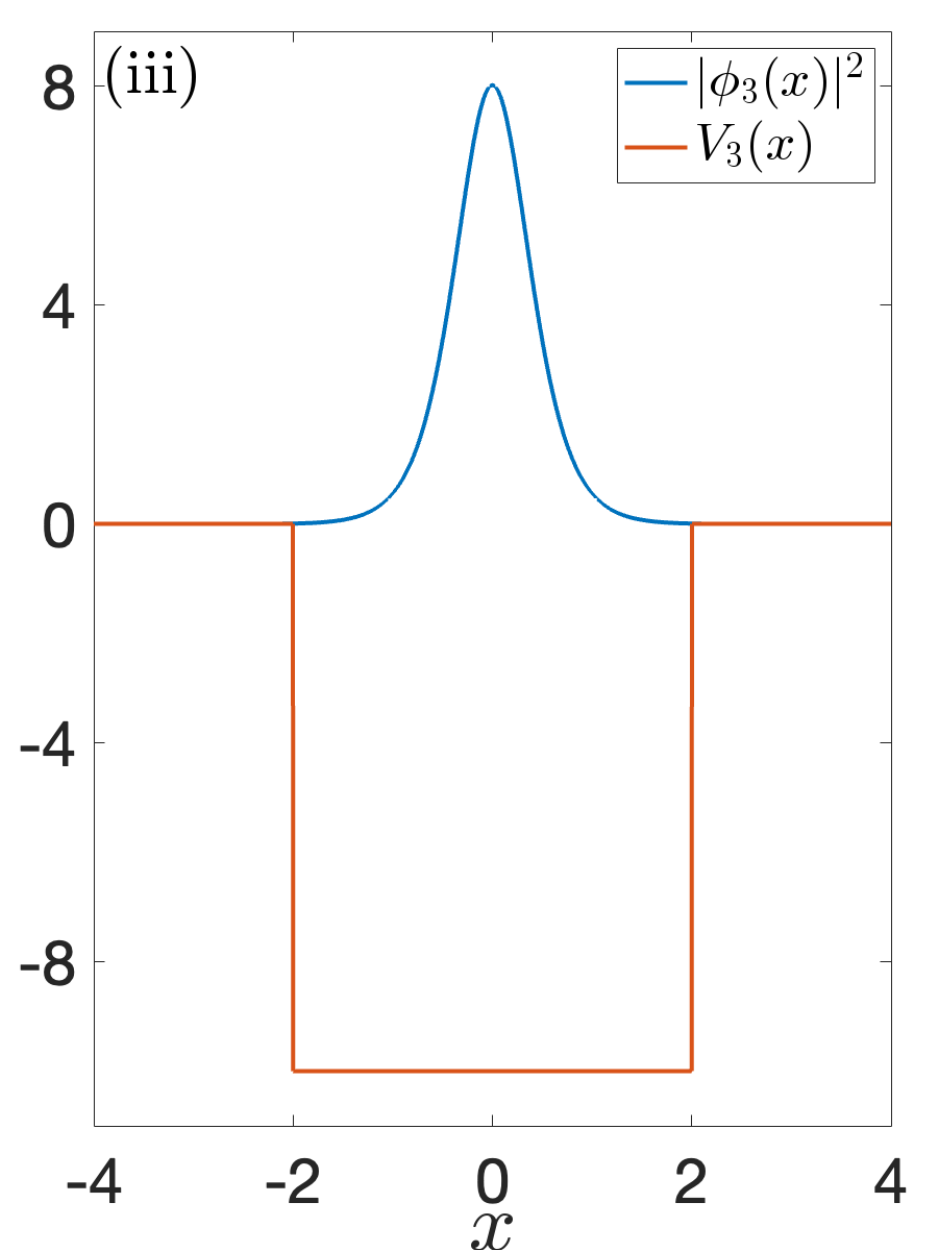}}
	\caption{Plots of the potential $V_j$ and the corresponding ground state $|\phi_j|^2$ for $j = 1, 2, 3$}
	\label{fig:V_phi}
\end{figure}

Then we apply the explicit sLRI1 and sLRI2 methods, and the implicit IMsLRI1 and IMsLRI2 methods to solve the NLSE \cref{NLSE} \revisions{(note that we do not solve the NLSE with potentials here)}. We show the results of three cases: (i) $u_0 = \phi_1, \mu = -1$; (ii) $u_0 =  \phi_2, \mu = -2 $; (iii) $u_0 = \phi_3, \mu = -1$. In the computation, the fixed-point iteration is adopted for the implicit methods to solve the nonlinear systems up to an accuracy of $10^{-9}$. The numerical results are shown in \cref{fig:conv_dt_run_time_H1,fig:conv_dt_run_time_H2,fig:conv_dt_run_time_H3} for the three cases, respectively, where, due to the significantly different computational costs of explicit and implicit methods, we plot the errors versus the computational time for a fair comparison. 

From the numerical results, we see that the two explicit sLRIs perform better than the implicit ones in all the cases in the sense that they can obtain smaller errors with the same computational cost. {The superiority of explicit methods becomes more obvious as the regularity of the initial data (though remains low) increases.} Moreover, in this metric, the explicit sLRI1 method is even better than the explicit sLRI2 method due to the simplicity of the scheme which requires fewer FFTs. These observations confirm the advantages of our explicit sLRIs over existing implicit ones. {It should be noted that implementing fully implicit methods, such as IMsLRI1 and IMsLRI2, is significantly more involved, requiring careful selection of the nonlinear solver, initial guess, and error tolerance. Moreover, their performance is sensitive to many factors including the time step sizes, the nonlinearity strength, the \revisions{underlying} exact solutions, etc.}

\begin{figure}[htbp]
	\centering
	{\includegraphics[width=0.475\textwidth]{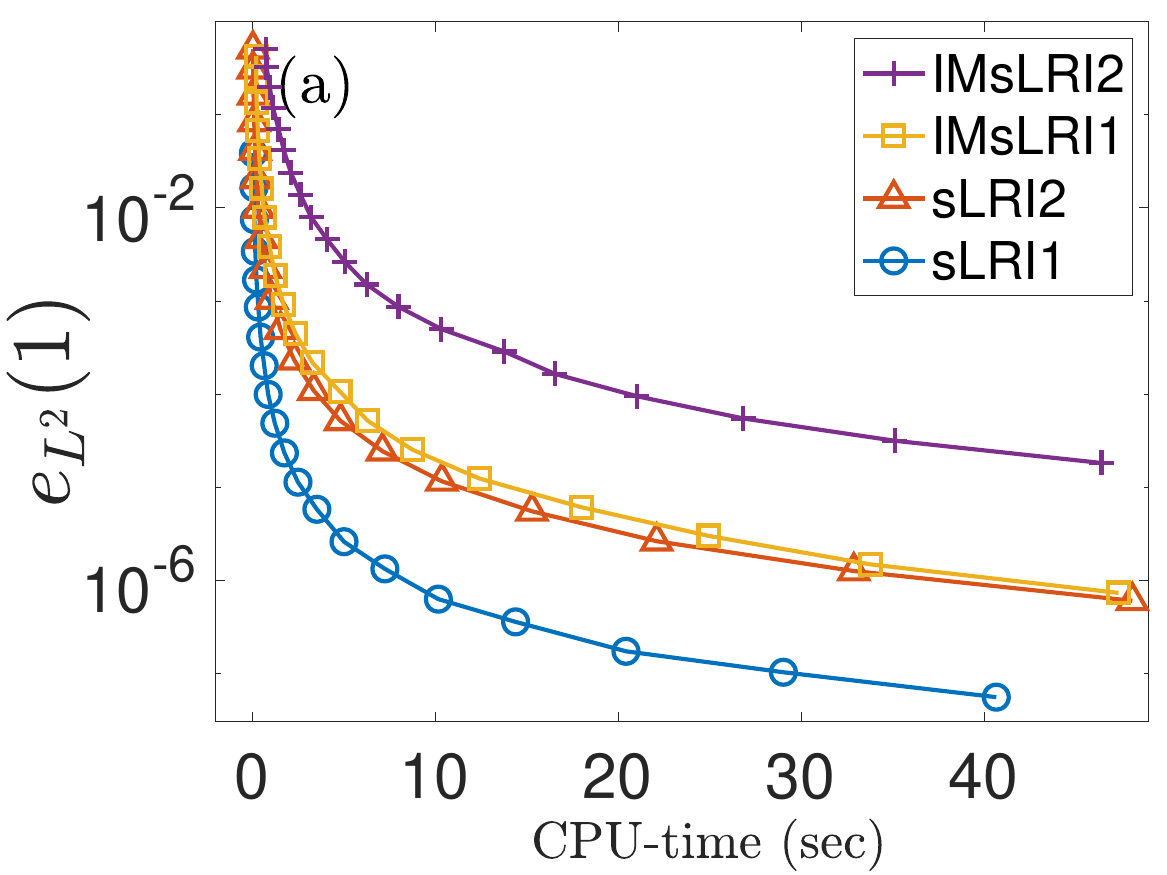}}\hspace{1em}
	{\includegraphics[width=0.475\textwidth]{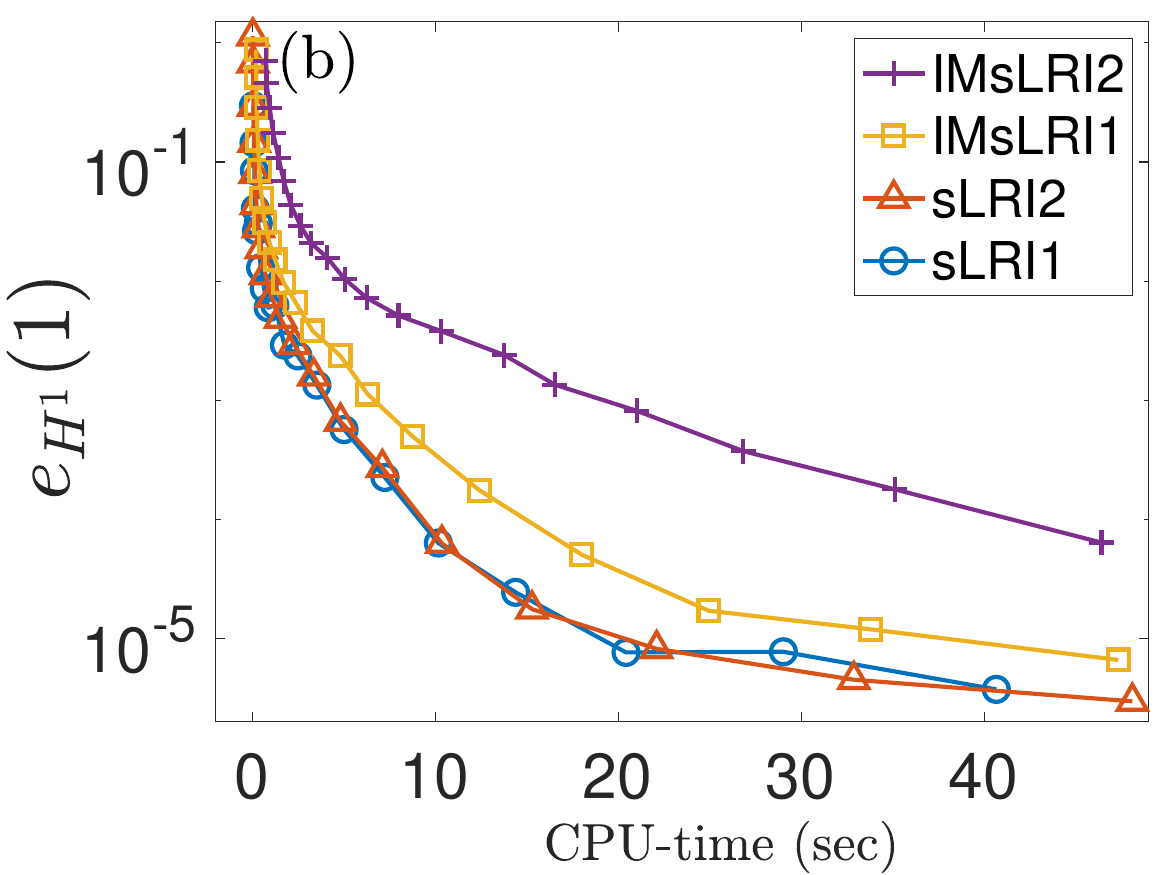}}
	\caption{Comparison of errors versus CPU time for the sLRI and the implicit sLRI under the NLSE \cref{NLSE} on $\widetilde{\mathbb T}$ with initial datum $u_0 = \phi_1$ and $\mu = -1$}
	\label{fig:conv_dt_run_time_H1}
\end{figure}
\begin{figure}[htbp]
	\centering
	{\includegraphics[width=0.475\textwidth]{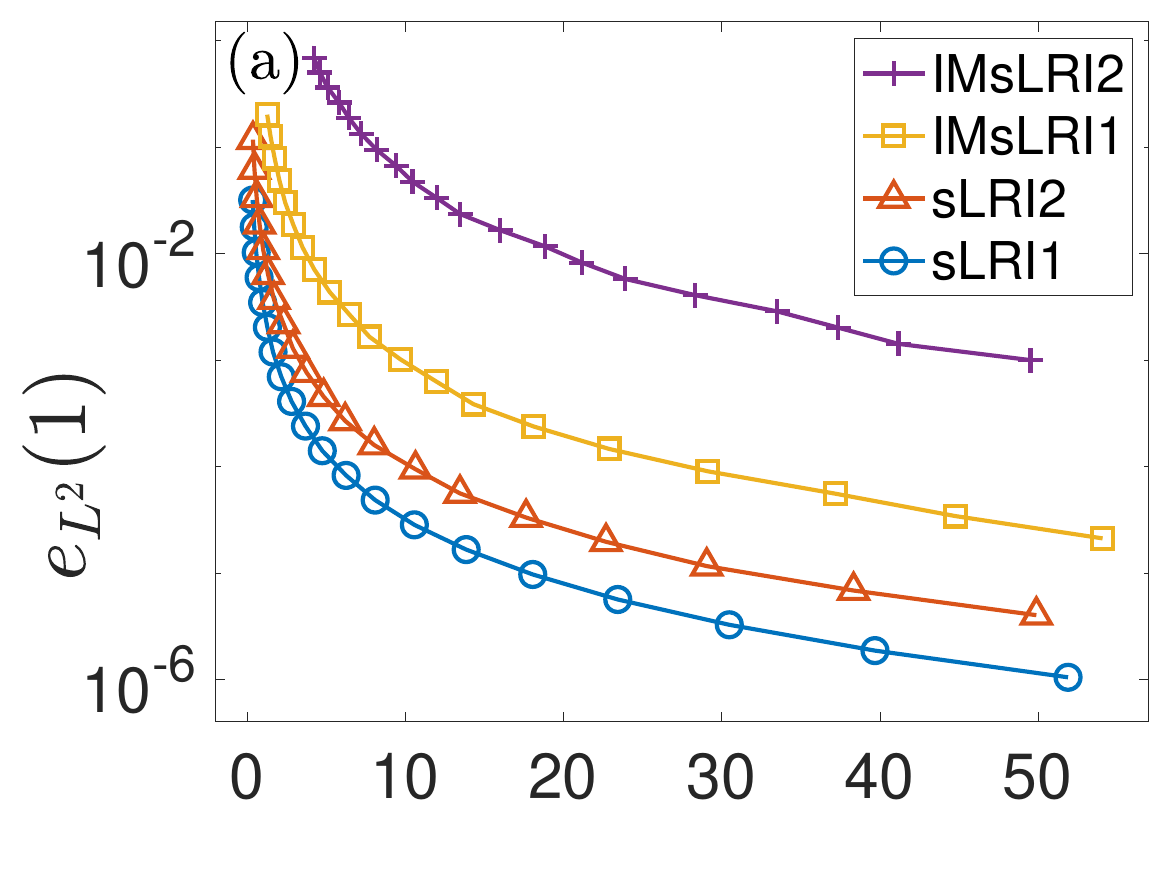}}\hspace{1em}
	{\includegraphics[width=0.475\textwidth]{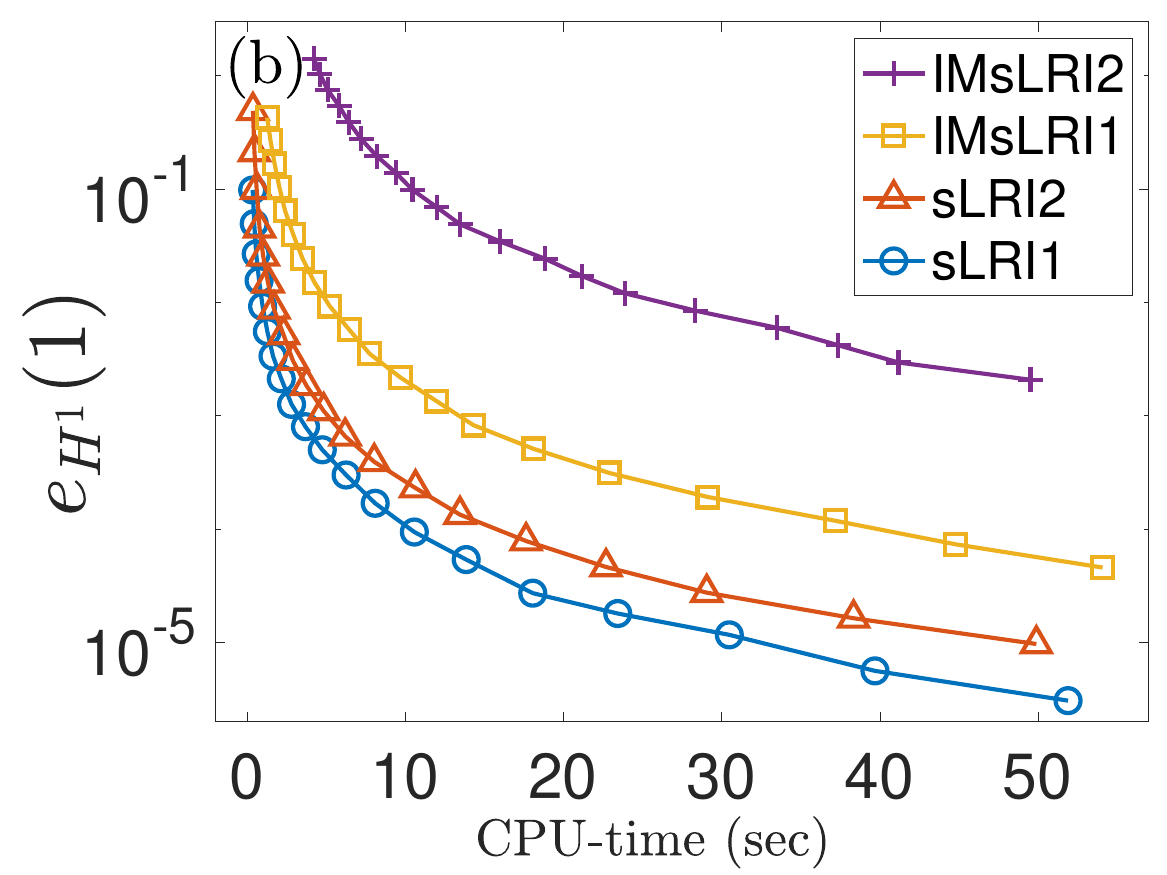}}
	\caption{Comparison of errors versus CPU time for the sLRI and the implicit sLRI under the NLSE \cref{NLSE} on $\widetilde{\mathbb T}$ with initial datum $u_0 = \phi_2$ and $\mu = -2$}
	\label{fig:conv_dt_run_time_H2}
\end{figure}
\begin{figure}[htbp]
	\centering
	{\includegraphics[width=0.475\textwidth]{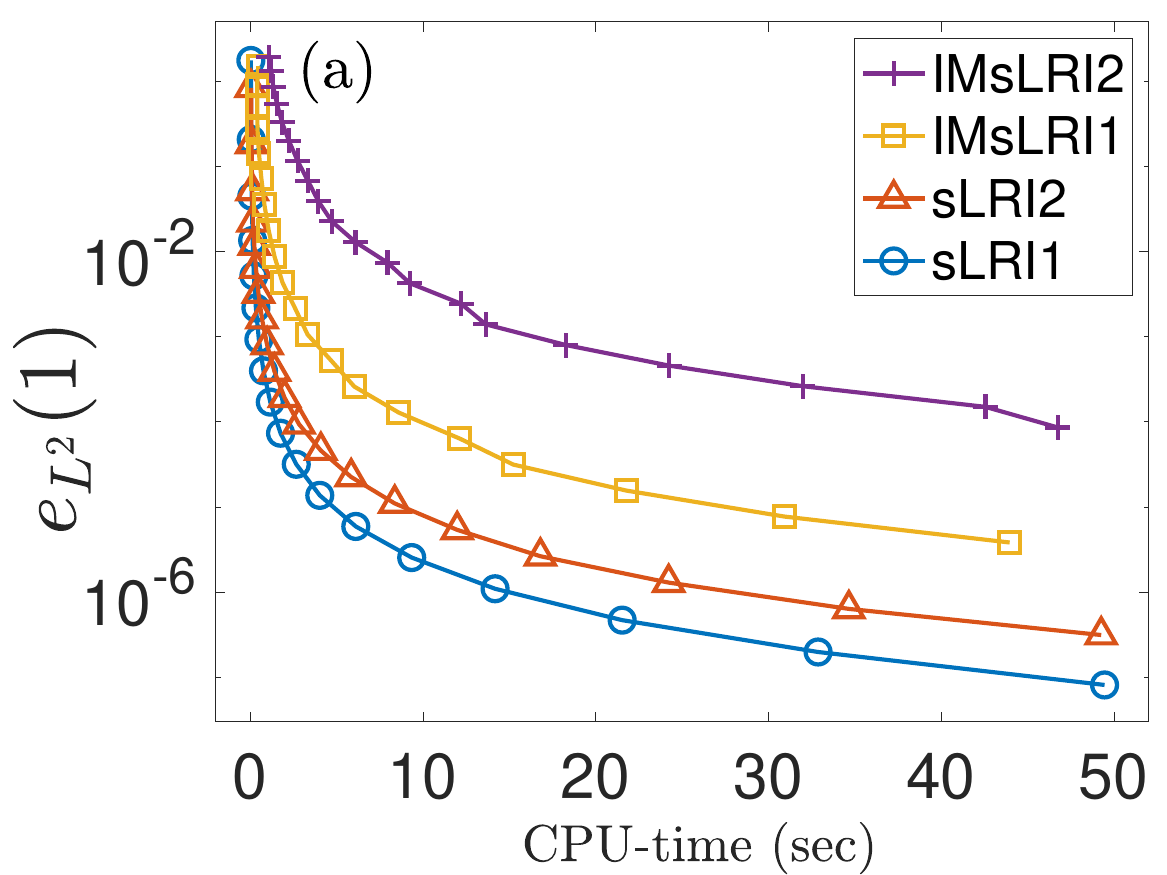}}\hspace{1em}
	{\includegraphics[width=0.475\textwidth]{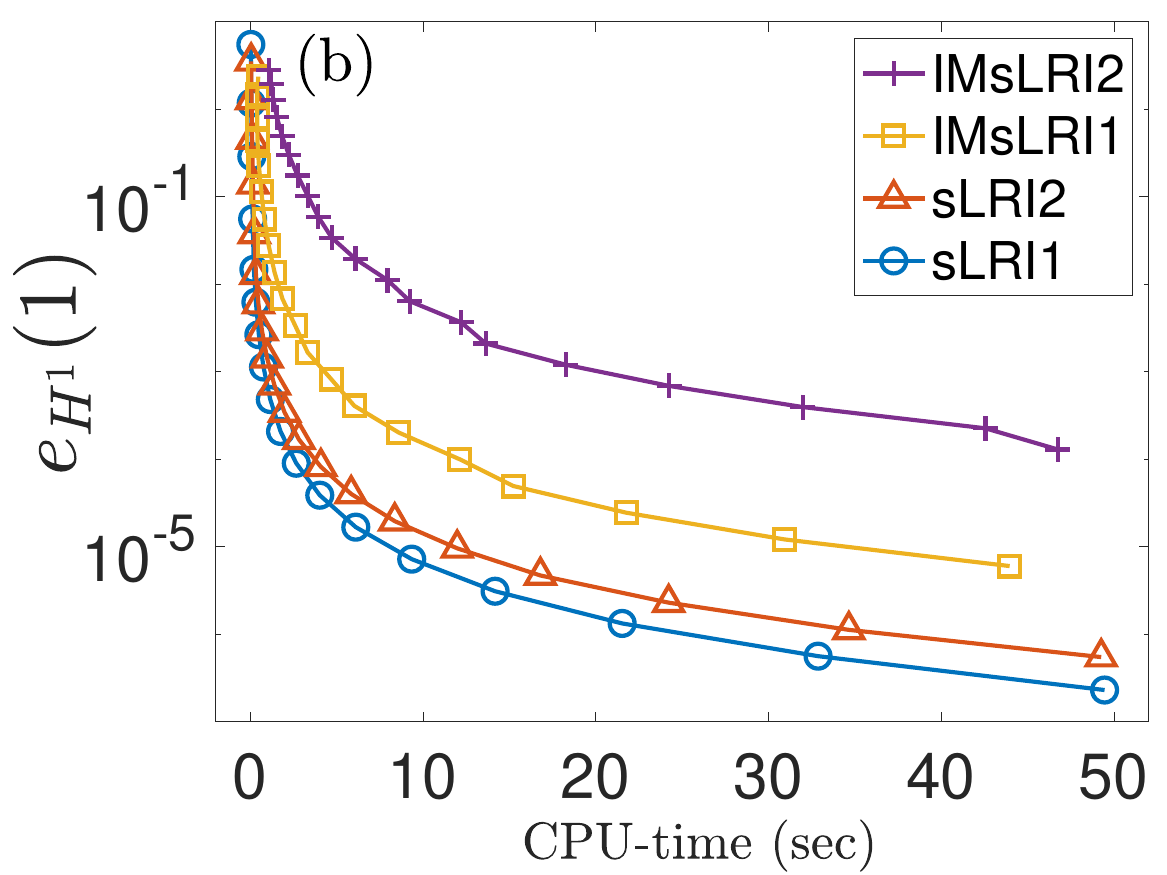}}
	\caption{Comparison of errors versus CPU time for the sLRI and the implicit sLRI under the NLSE \cref{NLSE} on $\widetilde{\mathbb T}$ with initial datum $u_0 = \phi_3$ and $\mu = -1$}
	\label{fig:conv_dt_run_time_H3}
\end{figure}

In conclusion, the newly proposed explicit sLRIs are both accurate and efficient under low regularity initial data. Moreover, they demonstrate excellent long-time performance. 

\section{Conclusion}\label{sec:conclusions} 
In this paper, we introduced the first fully explicit symmetric low-regularity integrators for the nonlinear Schr\"odinger equation. We demonstrated that, using a {multi-step} construction, we can overcome a significant amount of the computational cost incurred by implicit single step symmetric low-regularity schemes, while preserving the favourable structure preserving behaviour of symmetric methods and the low-regularity convergence properties of our schemes. The general construction extends to a wide class of numerical schemes, and is, in principle not limited to the NLSE. Thus we believe that in very similar vain one will be able to construct fully explicit symmetric low-regularity integrators for a large class of dispersive nonlinear systems and this will be investigated in future work.

\appendix
\section{{Proof of \cref{eq:T1T2_est}}}
\begin{proof}
When $\gamma > d/4$, by \cref{eq:exp_est} and \cref{eq:bi3} with $\gamma=0$, we have
\begin{align}
	\| T_1 \|_{L^2} 
	&\lesssim \| w(t) - w(-t)\|_{L^2} \| \nabla w_1 \|_{H^{\frac{d}{2}+\vep}} \| \nabla w_2 \|_{H^{\frac{d}{2}+\vep}} \notag \\
	&\lesssim \tau^{\gamma} \| u(t_n) \|_{H^{2\gamma}} \| w_1 \|_{H^{\frac{d}{2} + \vep + 1}} \| w_2 \|_{H^{\frac{d}{2} + \vep + 1}} \notag \\
	&\leq \tau^{\gamma}C(M_{2\gamma}, M_{\frac{d}{2} + \vep + 1})  \leq \tau^{\gamma}C(M_{2\gamma+1}),\end{align}
    \begin{align}
	\| T_2 \|_{L^2} 
	&\lesssim \|  w_1 \|_{H^{\frac{d}{2}+\vep}} \| \nabla (w(t) - w(-t))\|_{L^2} \| \nabla w_2 \|_{H^{\frac{d}{2}+\vep}} \notag \\
	&\lesssim \tau^{\gamma} \| w_1 \|_{H^{\frac{d}{2}+\vep}} \| u(t_n) \|_{H^{2\gamma+1}} \| w_2 \|_{H^{\frac{d}{2} + \vep + 1}} \notag \\
    &\leq \tau^{\gamma}C(M_{2\gamma+1}, M_{\frac{d}{2}+\vep + 1}) \leq \tau^\gamma C(M_{2\gamma+1}).  
\end{align}
When $\gamma < d/4$, for $T_1$, by $\|v w\|_{L^2} \lesssim \| v \|_{H^{1+\frac{d}{4}-\gamma}} \| w \|_{H^{2\gamma}} \ (0 \leq \gamma \leq 1)$ and \cref{eq:bi2}, 
\begin{align}
	\| T_1 \|_{L^2} 
	&\lesssim \| w(t) - w(-t)\|_{1+\frac{d}{4} - \gamma} \| \nabla w_1 \cdot \nabla w_2 \|_{H^{2\gamma}} \notag \\
	&\lesssim \tau^{\gamma} \| u(t_n) \|_{H^{1+\frac{d}{4}+\gamma}} \| \nabla w_1 \|_{H^{\frac{d}{4} + \gamma}} \| \nabla w_2 \|_{H^{\frac{d}{4} + \gamma}} 
	\leq \tau^{\gamma}C(M_{1+\frac{d}{4} + \gamma}),  
\end{align}
and for $T_2$, by \cref{eq:bi1}, \cref{eq:exp_est}, and \cref{eq:bi3} with $\gamma=0$, we have
\begin{align*}
	\| T_2 \|_{L^2} 
	&\lesssim \|  w_1 \|_{H^{\frac{d}{2}+\vep}} \| \nabla (w(t) - w(-t)) \|_{H^{\frac{d}{4} - \gamma}} \| \nabla w_2 \|_{H^{\frac{d}{4}+\gamma}}  \\
	&\lesssim \tau^{\gamma} \| w_1 \|_{H^{\frac{d}{2}+\vep}} \| u(t_n) \|_{H^{1+\frac{d}{4}+\gamma}} \| w_2 \|_{H^{1+\frac{d}{4} + \gamma}} 
	\leq \tau^{\gamma}C(M_{1+\frac{d}{4}+\gamma}).
\end{align*}
When $\gamma = d/4$, by \cref{eq:exp_est}, \cref{eq:bi3} with $\gamma=0$, we have
\begin{align*}
&\| T_1 \|_{L^2} \lesssim \| w(t) - w(-t) \|_{L^2} \| \nabla w_1 \|_{H^{\frac{d}{2}+\vep}} \| \nabla w_2 \|_{H^{\frac{d}{2}+\vep}} \leq \tau^\gamma C(M_{1 + \frac{d}{2} + \vep}), \\
&\| T_2 \|_{L^2} \lesssim \|  w_1 \|_{H^{\frac{d}{2}+\vep}} \| \nabla (w(t) - w(-t)) \|_{L^2} \| \nabla w_2 \|_{H^{\frac{d}{2}+\vep}} \leq \tau^\gamma C(M_{1 + \frac{d}{2} + \vep}). 
\end{align*}
\end{proof}

\section*{Acknowledgments}
The authors would like to thank Professor Weizhu Bao (National University of Singapore) for his valuable suggestions and comments and Professor Yvain Bruned (Universit\'e de Lorraine) and Professor Katharina Schratz (Sorbonne Universit\'e) for several helpful discussions explaining the use decorated tree expansions for the construction of low-regularity integrators as well as Professor Tobias Jahnke (Karlsruhe Institute of Technology) for interesting conversations about symmetric integrators. Part of this work was undertaken while the authors were participating in the IMS Young Mathematical Scientist Forum in Applied Mathematics at the Institute for Mathematical Sciences, National University of Singapore, in January 2024.

\bibliographystyle{siamplain}
\bibliography{references}
\end{document}